\def\R{{\mathbb{R}}}
\def\N{{\mathbb{N}}}
\def\Z{{\mathbb{Z}}}
\newcommand{\E}{\mathbb{E}}
\renewcommand{\P}{\mathbb{P}}
\def\e{{\varepsilon}}
\def\<{\langle}
\def\>{\rangle}
\newcommand{\Pro}{\ensuremath{\mathbb{P}}}
\newcommand{\Var}{\mathop{\mathrm{Var}}\nolimits}
\theoremstyle{usual}
\newtheorem{theorem}{Theorem}[section]
\newtheorem{corollary}{Corollary}[section]
\newtheorem{lemma}{Lemma}
\newtheorem{proposition}{Proposition}
\newtheoremstyle{likedef}
  {}%
  {}%
  {}%
  {\parindent}%
  {\bfseries}%
  {.}%
  {.5em}%
  {}%
\theoremstyle{likedef}
\newtheorem{definition}{Definition}[section]
\newtheorem{remark}{Remark}
\numberwithin{equation}{section}
\begin{document}

\title{Differentiability at the edge of the percolation cone and related results in first-passage percolation}

\author{Antonio Auffinger\thanks{Department of Mathematics, University of Chicago, 5734 S. University Avenue, Chicago, Illinois 60637. Email:auffing@math.uchicago.edu. Research partially funded by NSF Grant DMS 0806180.}
\and
Michael Damron\thanks{Mathematics Department, Princeton University, Fine Hall, Washington Rd., Princeton, NJ 08544. Email: mdamron@math.princeton.edu;
Research funded by an NSF Postdoctoral Fellowship.}}
\date{February 2012}
\maketitle

\footnotetext{MSC2000: Primary 60K35, 82B43.}
\footnotetext{Keywords: First-passage percolation, shape fluctuations, oriented percolation, Richardson's growth model, graph of infection}

\begin{abstract}
We study first-passage percolation in two dimensions, using measures $\mu$ on passage times with $b:=\inf~supp(\mu) >0$ and $\mu(\{b\})=p\geq \vec p_c$, the threshold for oriented percolation. We first show that for each such $\mu$, the boundary of the limit shape for $\mu$ is differentiable at the endpoints of flat edges in the so-called percolation cone. We then conclude that the limit shape must be non-polygonal for all of these measures. Furthermore, the associated Richardson-type growth model admits infinite coexistence and if $\mu$ is not purely atomic the graph of infection has infinitely many ends. We go on to show that lower bounds for fluctuations of the passage time given by Newman-Piza extend to these measures. We establish a lower bound for the variance of the passage time to distance $n$ of order $\log n$ in any direction outside the percolation cone under a condition of finite exponential moments for $\mu$. This result confirms a prediction of Newman and Piza \cite{NewmanPiza} and Zhang \cite{Zhang}. Under the assumption of finite radius of curvature for the limit shape in these directions, we obtain a power-law lower bound for the variance and an inequality between the exponents $\chi$ and $\xi$.
\end{abstract}

\section{Introduction}
In this paper, we consider i.i.d. first-passage percolation (FPP) on $\mathbb{Z}^2$. This model was originally introduced by Hammersley and Welsh \cite{HW} in 1965 as a model of fluid flow through a random medium. It has been a topic of research for probabilists since its origins but despite all efforts through the past decades (see \cite{Blair-Stahn, Howard, Kestensurvey} for reviews) most of the predictions about its important statistics remain to be understood. 

The model is defined as follows. We place a non-negative random variable $\tau_e$, called the {\it passage time} of the edge $e$, at each nearest-neighbor edge in $\Z^2$. The collection $(\tau_e)$ is assumed to be i.i.d. with common distribution $\mu$.  A {\it path} $\gamma$ is a sequence of edges $e_1, e_2, \ldots$ in $\Z^2$ such that for each $n \geq 1$, $e_n$ and $e_{n+1}$ share exactly one endpoint. For any finite path $\gamma$ we define the {\it passage time} of $\gamma$ to be
\[
\tau(\gamma)=\sum_{e \in \gamma} \tau_e\ .
\]
Given two points $x,y \in \mathbb{R}^2$ one then sets
\[
\tau(x,y) = \inf_{\gamma} \tau(\gamma)\ ,
\]
where the infimum is over all finite paths $\gamma$ that contain both $x'$ and $y'$, and $x'$ is the unique vertex in $\mathbb{Z}^2$ such that $x \in x' + [0,1)^2$ (similarly for $y'$). A minimizing path for $\tau(x,y)$ is called a {\it geodesic} from $x$ to $y$. For each $t \geq 0$ let 
\[
B(t) = \{y \in \mathbb{R}^2~:~\tau(0,y)\leq t\}\ .
\]

In the case that $\mu(\{0\})=0$, the pair $(\mathbb{Z}^2,\tau(\cdot, \cdot))$ is a metric space and $B(t)$ is the (random) ball of radius $t$ around the origin. It is natural to ask about the large scale structure of this space, in particular how $B(t)$ typically looks as $t$ gets large. The classical and foundational ``shape theorem'' of Cox and Durrett \cite{CoxDurrett} gives the analogue of the law of large numbers for the growth of this random ball. It is the first step to understand this random metric. It can be stated as follows.

Let $\mathcal{M}$ be the set of Borel probability measures on $[0,\infty)$ with finite mean and with $\mu(\{0\})<p_c$, the threshold for bond percolation in $\Z^2$. If $S$ is a subset of $\mathbb{R}^2$ and $r\in \mathbb{R}$ we write $rS = \{rs~:~s \in S\}$.

\begin{theorem}[Cox and Durrett]\label{thm:limitshape}
For each $\mu \in \mathcal{M}$, there exists a deterministic, convex, compact set $B_\mu$ in $\mathbb{R}^2$ such that for each $\varepsilon>0$,
\[
\mathbb{P}\left( (1-\varepsilon)B_\mu \subset \frac{B(t)}{t} \subset (1+\varepsilon)B_\mu \text{ for all large } t \right) = 1\ .
\]
\end{theorem}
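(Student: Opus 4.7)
The plan is to follow the classical route via Kingman's subadditive ergodic theorem. Fix $x \in \mathbb{Z}^2 \setminus \{0\}$ and consider the doubly-indexed array $X_{m,n} = \tau(mx, nx)$ for $0 \leq m < n$. The triangle inequality $\tau(mx, nx) \leq \tau(mx, kx) + \tau(kx, nx)$ gives subadditivity, the distribution of $X_{m,n}$ depends only on $n-m$, and the sequence $\{X_{n, n+1}\}$ is ergodic under the shift $\tau_e \mapsto \tau_{e + x}$. The finite-mean hypothesis yields $\E X_{0,1} \leq |x|_1 \E \tau_e < \infty$ by comparing $\tau(0,x)$ to the passage time of any deterministic lattice path from $0$ to $x$. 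Kingman's theorem therefore produces a deterministic $g(x) \in [0,\infty)$ with $\tau(0,nx)/n \to g(x)$ almost surely and in $L^1$.

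Next I would extend $g$ to $\R^2$. Homogeneity $g(qx) = q\, g(x)$ for $q \in \Q_{>0}$ follows by passing to subsequences, and subadditivity $g(x+y) \leq g(x) + g(y)$ is inherited from $\tau$. Together with the a priori bound $g(x) \leq c|x|_1$, these properties force $g$ to be Lipschitz on $\Q^2$ and hence to admit a unique continuous, positively homogeneous, subadditive extension to all of $\R^2$, i.e.\ a seminorm. Define $B_\mu = \{y \in \R^2 : g(y) \leq 1\}$; by construction $B_\mu$ is closed and convex.

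The crucial use of the hypothesis $\mu(\{0\}) < p_c$ enters in showing $g(x) > 0$ for $x \neq 0$, which is what makes $B_\mu$ compact rather than unbounded. Let $p_0 = \mu(\{0\})$. The set of edges with $\tau_e = 0$ is a Bernoulli-$p_0$ bond percolation on $\Z^2$, and since $p_0 < p_c$ its clusters are a.s. finite with exponential tails. By a standard renormalization/block argument one shows that any path from $0$ to $nx$ must contain at least $c n$ edges whose passage time is bounded below by some $\delta > 0$, producing $\E \tau(0,nx) \geq c' n > 0$ and hence $g(x) > 0$. I expect this positivity step to be the main obstacle: the naive bound $\tau(0,nx) \geq b \cdot (\text{number of edges in the geodesic})$ fails when $b = 0$, and the percolation comparison is exactly what Cox--Durrett had to introduce.

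Finally, to upgrade directional convergence to the uniform shape statement, I would combine a.s.\ convergence along a countable dense set of directions $D \subset S^1$ with the Lipschitz bound on $g$. For the inner containment $(1-\e)B_\mu \subset B(t)/t$, pick finitely many directions $\theta_1,\dots,\theta_N \in D$ whose $g$-unit vectors form a fine net on $\partial B_\mu$, apply the a.s.\ convergence along each $n \theta_i$, and interpolate using subadditivity together with an upper bound on the passage time of short segments (which requires a moment control and is where the finite-mean assumption is again needed, via a Borel--Cantelli argument on maxima over annuli). The outer containment $B(t)/t \subset (1+\e)B_\mu$ follows symmetrically from the same lattice of directions and the lower bound produced in the previous paragraph. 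Taking the intersection of the full-probability events over a countable $D$ and over $\e \in \Q_{>0}$ yields the stated almost-sure shape theorem.
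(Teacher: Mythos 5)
The paper does not prove this theorem; it cites it to Cox and Durrett \cite{CoxDurrett}, so the meaningful comparison is with the cited source, and your route (Kingman's subadditive ergodic theorem for the radial limit $g$, positivity of $g$ via the percolation comparison $\mu(\{0\})<p_c$, and a finite-net interpolation to pass from directional limits to the uniform shape statement) is indeed the structure of the classical proof.

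The one step you wave at --- ``a Borel--Cantelli argument on maxima over annuli'' --- is precisely the non-routine part and is what Cox and Durrett actually had to work for. A naive union bound via Markov on deterministic-path passage times between nearby points produces probabilities of order $\e/\delta$ that do \emph{not} decay in $n$, so Borel--Cantelli does not close. Their fix is to replace $\tau(0,z)$ by a modified $\hat\tau(0,z)$ that only uses paths which first escape through one of several disjoint short detours around $0$ (and similarly at $z$), so that the contribution from the endpoints is dominated by a $\min$ of several independent edge weights; the tail of this minimum is light enough to sum. The resulting moment condition is $\E\bigl[\min(\tau_1,\dots,\tau_4)^2\bigr]<\infty$, which is implied by (and weaker than) the paper's hypothesis of finite mean, since $\P(\tau>t)\le \E\tau/t$ gives $\P(\min>t)\le(\E\tau/t)^4$. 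So your sketch can in fact be completed under the stated hypothesis, but you should not present the interpolation as a straightforward maximal inequality; the escape-path/$\hat\tau$ device (or something equivalent) is needed, and it is the content of the Cox--Durrett argument.
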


Although the above theorem gives the existence of a limit shape for any edge distribution,  no explicit expression is known for $B_\mu$ for any non-trivial measure $\mu \in \mathcal{M}$. 
This observation leads to fundamental questions for the study of these models:
\begin{enumerate}
\item Which compact convex sets $C$ are realizable as limit shapes?
\item What is the relationship between the measure $\mu$ and the limit shape $B_\mu$?
\item What are the fluctuations of $B(t)$ around its mean and around the set $tB_\mu$?
\end{enumerate}
In this paper,  we will mostly focus on the first question, though our proofs shed light on the second and we derive a few results related to the third one. Before describing our results, we emphasize that very little is known about these questions for FPP with i.i.d. passage times.  It is believed that under very general assumptions on $\mu$, the limit shape is strictly convex except in a very explicit region known as the {\it percolation cone}. It appears, however, that this statement is far from being proved. In fact, up until \cite{DH} there was not even a proven example of a measure which has a limit shape that is not a polygon. In this paper we give a major step in this direction.

We will first show that in a large subclass $\mathcal{M}_p$ of $\mathcal{M}$ (defined in the next section) the limit shapes have certain non-trivial points at which the boundaries are differentiable. We will then prove that this fact not only implies that the limit shapes are non-polygonal (extending \cite{DH}) but it also has a variety of consequences, for example for related growth models, and shape fluctuation estimates and exponents.

A word of comment is needed here. Interestingly, question 1 is completely solved by H\"aggstr\"om and Meester in the case of stationary passage times. In \cite{HM}, they show that given any compact convex set $C$ containing the origin which is also symmetric about the axes there exists a stationary (that is, invariant under lattice translations) ergodic measure $\mathbb{P}$ such that the limit shape for FPP with weights distributed by $\mathbb{P}$ is $C$. This is in sharp contrast with the i.i.d. case explained above.

\subsection{Flat edges for limit shapes}

Now we describe the class of measures under consideration. This collection was introduced by Durrett and Liggett \cite{DurrettLiggett} and further studied by Marchand \cite{Marchand}. Its main feature is the presence of a flat edge for the limit shape, as we describe below.

Write $supp(\mu)$ for the support of the measure $\mu$. Let $\mathcal{M}_p$ be the set of measures $\mu$ that satisfy the following:
\begin{enumerate}
\item $supp(\mu) \subseteq [1,\infty)$ and
\item $\mu(\{1\})=p\geq\vec p_c$,
\end{enumerate}
where $\vec p_c$ is the critical parameter for oriented percolation on $\mathbb{Z}^2$ (see, e.g., \cite{Durrett}). In \cite{DurrettLiggett}, it was shown that if $\mu \in \mathcal{M}_p$ for $ p > \vec p_c$, then $B_\mu$ has some flat edges. The precise location of these edges was found in \cite{Marchand}. To describe this, write $\mathcal{B}_1$ for the closed $\ell^1$ unit ball:
\[
\mathcal{B}_1 = \{(x,y)\in \mathbb{R}^2~:~ |x|+|y|\leq 1\}
\]
and write $int ~\mathcal{B}_1$ for its interior. For $p > \vec p_c$ let $\alpha_p$ be the asymptotic speed of oriented percolation \cite{Durrett}, define the points
\begin{equation}\label{eq:NP}
M_p = \left(\frac{1}{2} - \frac{\alpha_p}{\sqrt 2}, \frac{1}{2} + \frac{\alpha_p}{\sqrt 2}\right) \text{ and } N_p = \left(\frac{1}{2} + \frac{\alpha_p}{\sqrt 2}, \frac{1}{2} - \frac{\alpha_p}{\sqrt 2}\right)
\end{equation}
and let $[M_p,N_p]$ be the line segment in $\mathbb{R}^2$ with endpoints $M_p$ and $N_p$. For symmetry reasons, the following theorem of Marchand is stated only for the first quadrant.

\begin{theorem}[Marchand]\label{thm:marchand1}
Let $\mu \in \mathcal{M}_p$.
\begin{enumerate}
\item $B_\mu \subset \mathcal{B}_1$.
\item If $p < \vec p_c$ then $B_\mu \subset int~\mathcal{B}_1$.
\item If $p > \vec p_c$ then $B_\mu \cap [0,\infty)^2 \cap \partial \mathcal{B}_1 = [M_p,N_p]$.
\item If $p = \vec p_c$ then $B_\mu \cap [0,\infty)^2 \cap \partial \mathcal{B}_1 = (1/2,1/2)$.
\end{enumerate}
\end{theorem}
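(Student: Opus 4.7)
The plan is to exploit a coupling with oriented bond percolation. Declare each nearest-neighbor edge $e$, viewed as oriented in the $+x$ or $+y$ direction, to be \emph{open} iff $\tau_e=1$; since $\mu(\{1\})=p$, each edge is independently open with probability $p$, which realizes oriented Bernoulli$(p)$ percolation on $\mathbb{Z}^2$. For Part~1, the hypothesis $\tau_e\geq 1$ for every edge means that any path of $k$ edges has passage time at least $k$, and any path from $x'$ to $y'$ has at least $\|y'-x'\|_1$ edges; thus $\tau(0,y)\geq \|y'\|_1$. Writing $g_\mu(y):=\lim_n \tau(0,ny)/n$ for the time constant and applying the shape theorem, one obtains $g_\mu(y)\geq \|y\|_1$, so $B_\mu\subset\mathcal{B}_1$.

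For Part~3 with $p>\vec p_c$, I first prove $[M_p,N_p]\subset B_\mu$. The oriented cluster from $0$, conditioned on survival, has at ``level'' $n$ (sites at $\ell^1$ distance $n$ from $0$ reached by open oriented paths) a wet interval whose extreme sites, rescaled by $1/n$, converge to $M_p$ and $N_p$; the $1/\sqrt{2}$ factors in their definitions reflect the $45^{\circ}$ rotation between Durrett's oriented lattice and $\mathbb{Z}^2$. For $v$ in the relative interior of $[M_p,N_p]$, a standard restart argument (to bypass the positive-probability event that $0$ lies in a finite cluster) produces with probability tending to one an open oriented path from $0$ to a site within $o(n)$ of $nv$; each edge contributes $\tau_e=1$, so $\tau(0,nv)\leq n+o(n)$ and $g_\mu(v)\leq 1$. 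Together with Part~1 this forces $g_\mu(v)=1$, and convexity of $B_\mu$ extends the equality to the closed segment.

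For the reverse inclusion in Part~3 and for Parts~2 and~4, I would show $g_\mu(v)>1$ for every $v\in\partial\mathcal{B}_1\cap[0,\infty)^2$ not covered by the flat edge. If $p<\vec p_c$ this means every $v\neq 0$; at $p=\vec p_c$, continuity of $\alpha_p$ at the threshold with $\alpha_{\vec p_c}=0$ collapses the flat edge to the single point $(1/2,1/2)$. Any path from $0$ to $nv$ achieving $\tau$-length $\leq n$ must have combinatorial length exactly $n$ and every edge open, but large-deviation bounds for oriented percolation imply that the probability of reaching a site near $nv$ by an open oriented path decays exponentially in $n$ whenever $v$ is strictly outside the percolation cone. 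Competing paths pay a linear cost, either through excess length $k-n$ or through accumulated contributions $\tau_e-1$, and applying Kingman's subadditive ergodic theorem to an oriented-path time constant turns this into $g_\mu(v)\geq 1+\delta(v)$ with $\delta(v)>0$.

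The hard part will be that last conversion. A competitor path can accumulate many edges with $\tau_e$ only slightly above $1$, so a naive union bound over oriented paths of length $n$ with every $\tau_e=1$ is too weak once ``almost open'' edges are permitted to contribute. The resolution I would follow (due to Marchand) is a block renormalization: partition $\mathbb{Z}^2$ into mesoscopic blocks, approximate the FPP passage time across each block by a block-level oriented percolation functional, and use supercriticality of the block process to extract a uniform linear gap $\delta>0$ for directions outside $[M_p,N_p]$. Once this block argument is in hand, the three remaining parts all follow from Part~1 combined with the behavior of $\alpha_p$ as $p$ crosses $\vec p_c$.
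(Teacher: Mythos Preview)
The paper does not prove this statement; Theorem~\ref{thm:marchand1} is quoted as a background result due to Marchand \cite{Marchand} and no proof is supplied. There is therefore no ``paper's own proof'' to compare against.

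As for your proposal on its own merits: Part~1 is correct and complete. Your sketch for the inclusion $[M_p,N_p]\subset B_\mu$ in Part~3 is also the standard argument and is essentially fine (the restart argument and the identification of $M_p,N_p$ with the edge speeds of oriented percolation are exactly how Durrett--Liggett and Marchand proceed). However, for the reverse inclusion and for Parts~2 and~4 you explicitly defer the work: you identify correctly that the difficulty is showing a \emph{strict} linear gap $g_\mu(v)\geq 1+\delta(v)$ for $v$ outside the cone, and then say you would ``follow Marchand's block renormalization'' without carrying it out. That is the entire content of Marchand's paper, so at this point your proposal is an outline rather than a proof. In particular, the sentence ``applying Kingman's subadditive ergodic theorem to an oriented-path time constant turns this into $g_\mu(v)\geq 1+\delta(v)$'' is not justified: subadditivity alone gives existence of a limit, not strict positivity of the gap, and the whole point of the renormalization is to manufacture that strictness. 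If you intend this as a proof, you need to actually execute the block argument (or some substitute for it); as written it is a correct roadmap but not a proof.
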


The angles corresponding to points in the line segment $[M_p,N_p]$ are said to be in the \emph{percolation cone}; see Figure~\ref{fig:pcone}. The reason for this term will become clear in Section~\ref{sec:Durr}.

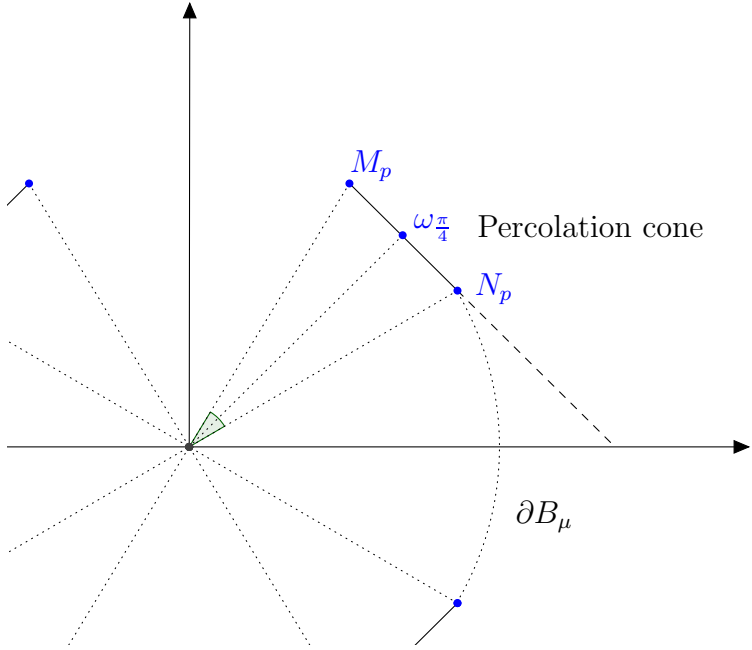
\begin{figure}\label{fig:pcone}
\centering
\definecolor{qqwuqq}{rgb}{0,0.39,0}
\definecolor{uququq}{rgb}{0.25,0.25,0.25}
\definecolor{qqqqff}{rgb}{0,0,1}
\begin{tikzpicture}[line cap=round,line join=round,>=triangle 45,x=0.7cm,y=0.7cm]
\clip(-3.42,-3.74) rectangle (12.97,9.55);
\draw [shift={(0,0)},color=qqwuqq,fill=qqwuqq,fill opacity=0.1] (0,0) -- (30.47:0.77) arc (30.47:58.93:0.77) -- cycle;
\draw (3.01,4.99)-- (5.04,2.96);
\draw [line width=0.4pt,dotted] (3.01,4.99)-- (0,0);
\draw [line width=0.4pt,dotted] (5.04,2.96)-- (0,0);
\draw (5.21,4.61) node[anchor=north west] {Percolation cone};
\draw[color=qqqqff] (4.01,4.61) node[anchor=north west] {$\omega_{\frac{\pi}{4}}$};
\draw (-3.01,4.99)-- (-5.04,2.96);
\draw [line width=0.4pt,dotted] (-3.01,4.99)-- (0,0);
\draw [line width=0.4pt,dotted] (4.01,4.01)-- (0,0);
\draw [line width=0.4pt,dotted] (-5.04,2.96)-- (0,0);
\draw [line width=0.4pt,dotted] (5.04,-2.96)-- (0,0);
\draw [line width=0.4pt,dotted] (3.01,-4.99)-- (0,0);
\draw (3.01,-4.99)-- (5.04,-2.96);
\draw [line width=0.4pt,dotted] (-5.04,-2.96)-- (0,0);
\draw [line width=0.4pt,dotted] (-3.01,-4.99)-- (0,0);
\draw (-3.01,-4.99)-- (-5.04,-2.96);
\draw [shift={(0,0)},dotted]  plot[domain=-0.53:0.53,variable=\t]({1*5.83*cos(\t r)+0*5.83*sin(\t r)},{0*5.83*cos(\t r)+1*5.83*sin(\t r)});
\draw [dash pattern=on 3pt off 3pt] (5.04,2.96)-- (8,0);
\draw [->] (0,0) -- (0.01,8.43);
\draw [->] (-4,0) -- (10.54,0);
\draw (5.93,-0.82) node[anchor=north west] {$\partial B_{\mu}$};
\fill [color=qqqqff] (3.01,4.99) circle (1.5pt);
\fill [color=qqqqff] (4.01,4.01) circle (1.5pt);
\draw[color=qqqqff] (3.42,5.33) node {$M_p$};
\fill [color=qqqqff] (5.04,2.96) circle (1.5pt);
\draw[color=qqqqff] (5.72,3) node {$N_p$};
\fill [color=uququq] (0,0) circle (1.5pt);
\fill [color=qqqqff] (-3.01,4.99) circle (1.5pt);
\fill [color=uququq] (0,0) circle (1.5pt);
\fill [color=qqqqff] (-5.04,2.96) circle (1.5pt);
\fill [color=uququq] (0,0) circle (1.5pt);
\fill [color=qqqqff] (5.04,-2.96) circle (1.5pt);
\fill [color=uququq] (0,0) circle (1.5pt);
\fill [color=qqqqff] (3.01,-4.99) circle (1.5pt);
\fill [color=uququq] (0,0) circle (1.5pt);
\fill [color=qqqqff] (3.01,-4.99) circle (1.5pt);
\fill [color=qqqqff] (5.04,-2.96) circle (1.5pt);
\fill [color=qqqqff] (-5.04,-2.96) circle (1.5pt);
\fill [color=uququq] (0,0) circle (1.5pt);
\fill [color=uququq] (0,0) circle (1.5pt);
\fill [color=qqqqff] (-4,0) circle (1.5pt);
\end{tikzpicture}
\caption{Pictorial description of the limit shape $B_\mu$. If $\mu \in \mathcal{M}_p$, the shape has a flat edge between the points $M_p$ and $N_p$. Outside the percolation cone the limit shape is unknown.}
\end{figure}

\section{Main Results}\label{sec: mainresults}

\subsection{Differentiability of the limit shape at percolation points}
The following is the main theorem of the paper: measures in $\mathcal{M}_p$ admit limit shapes whose boundaries are differentiable at the endpoints of the flat edge from Theorem~\ref{thm:marchand1}.

\begin{theorem}\label{thm: diff}
Let $\mu \in \mathcal{M}_p$ for $p\in[\vec p_c,1)$. The boundary $\partial B_\mu$ is differentiable at the point $N_p$.
\end{theorem}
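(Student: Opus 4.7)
The plan is to reformulate differentiability of $\partial B_\mu$ at $N_p$ as uniqueness of the subdifferential of the norm $g_\mu$ (where $B_\mu = \{x : g_\mu(x) \le 1\}$), and then verify this uniqueness by controlling a one-sided directional derivative. The flat edge in Theorem~\ref{thm:marchand1}(3) puts $(1,1)$ in $\partial g_\mu(N_p)$. Applying the subgradient inequality $g_\mu(y) \ge 1 + \eta \cdot (y - N_p)$ at $y = 0$, $y = 2N_p$, and $y = M_p$ shows that every $\eta = (a,b) \in \partial g_\mu(N_p)$ satisfies $\eta \cdot N_p = 1$ and $\eta \cdot M_p \le 1$. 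Using $M_p - N_p = \alpha_p\sqrt{2}(-1,1)$, these constraints confine $\eta$ to a one-parameter family with $a \ge 1 \ge b$, on which $(1,1)$ is the unique smooth point and ``corner'' normals correspond to $a > 1$. The theorem thus reduces to the one-sided upper bound
\[
\limsup_{\delta \downarrow 0}\frac{g_\mu(N_p + \delta e_1) - 1}{\delta} \le 1,
\]
the matching lower bound being immediate from $g_\mu \ge \|\cdot\|_1$ and $\|N_p + \delta e_1\|_1 = 1 + \delta$.

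To prove the upper bound I would construct, for each small $\delta > 0$ and large $n$, a path from the origin to a lattice approximation of $n(N_p + \delta e_1)$ with passage time at most $n(1+\delta)+o(n)$. The path concatenates (i) a long oriented-percolation backbone of weight-$1$ edges carrying the walker from $0$ to a point near $nN_p$ in time $n+o(n)$, obtained from the Durrett--Liggett and Marchand construction in the interior of the cone, with (ii) a short off-cone correction of $\ell^1$-length $\approx n\delta$ to the target. Since $p \ge \vec p_c > p_c$ and $\mu(\{1\}) = p$, the set of weight-$1$ edges contains an infinite cluster in which both endpoints of the correction sit (after a small relocation), so Antal--Pisztora-type chemical-distance bounds furnish a correction path using only weight-$1$ edges.

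The technical core of the proof is the constant in the correction cost: a direct chemical-distance bound yields only cost $\le C(p) \cdot n\delta$ with $C(p) > 1$, which would be compatible with a corner. To sharpen $C(p)$ to $1 + o(1)$ I would renormalize the correction, partitioning it into many independent block-crossings, each traversed by an OP-like path whose asymptotic cost per unit $\ell^1$-length is $1$; concentration over the linearly-many independent blocks then drives the effective constant to $1$. The most delicate case is $p = \vec p_c$, where $\alpha_p = 0$ and the cone degenerates to the diagonal direction; there the renormalization must be combined with Theorem~\ref{thm:marchand1}(4) and critical OP crossing estimates. Once this refined upper bound is in hand, the directional-derivative identity follows; it pins $a = 1$, and the constraint $\eta \cdot N_p = 1$ together with $\alpha_p < 1/\sqrt{2}$ (valid for $p < 1$) forces $b = 1$, yielding $\partial g_\mu(N_p) = \{(1,1)\}$ and hence differentiability of $\partial B_\mu$ at $N_p$.
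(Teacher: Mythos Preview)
Your convex--analytic reformulation is correct: differentiability at $N_p$ is equivalent to $\partial g_\mu(N_p)=\{(1,1)\}$, and this in turn reduces to a one--sided directional derivative bound. However, the constructive step contains a genuine gap that cannot be repaired by the methods you describe.

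The decomposition ``OP backbone to $\approx nN_p$, then correct to $n(N_p+\delta e_1)$'' gives only the trivial subadditive bound $g_\mu(N_p+\delta e_1)\le g_\mu(N_p)+\delta\,g_\mu(e_1)=1+\delta\,g_\mu(e_1)$. By Marchand's theorem the direction $e_1$ lies strictly outside the percolation cone, so $g_\mu(e_1)>1$; hence this bound is strictly weaker than the required $1+\delta$. No renormalization or chemical--distance argument can beat this: the shape theorem says the \emph{true} FPP geodesic between $nN_p$ and $n(N_p+\delta e_1)$ already costs $(1+o(1))\,n\delta\,g_\mu(e_1)$, and restricting to weight--$1$ edges can only make matters worse. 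The phrase ``each block traversed by an OP--like path whose asymptotic cost per unit $\ell^1$-length is $1$'' presupposes exactly what is unavailable, namely cost--$1$ oriented crossings in a direction outside the cone. The same obstruction appears if you replace $e_1$ by $(1,-1)$: the $\ell^1$ distance of the correction is $2n\delta$, so any \emph{sequential} correction costs at least $2n\delta$, again too much.

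The paper circumvents this by \emph{interleaving} rather than sequencing. One follows the right edge of the oriented cluster and, at a positive density $\rho_a$ of times, inserts a local ``bypass'' configuration (the $(C_a)$-configuration) which costs only $2$ extra weight--$1$ edges but, via Durrett's edge--speed monotonicity lemma, shifts the endpoint by $a$ units in the $(1,-1)$ direction in expectation. The cost--to--shift ratio is thus $2/a$, and assuming a supporting functional $f$ with $C_f:=f(1,-1)>0$ one derives the contradiction $C_f\le 2/a$ for arbitrarily large $a$. The critical case $p=\vec p_c$ is handled by coupling with a slightly supercritical $(\vec p_c+\varepsilon)$ oriented process and letting $\varepsilon\downarrow 0$. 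It is this interleaved bypass construction, together with Durrett's lemma, that your proposal is missing; a post--hoc correction at the end of the backbone cannot reproduce it.
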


\begin{remark}
Theorem~\ref{thm: diff} and Corollary~\ref{cor:nonpolygonal} below are stated for the single point $N_p$  but, due to symmetry, they are valid for $M_p$ and for the reflections of these two points about the coordinate axes.   
\end{remark}

As a consequence of this theorem, the limit shape must be non-polygonal. To state this precisely, let $ext(B_\mu)$ be the set of extreme points of $B_\mu$ and let $sides(B_\mu)$ be the number of points in $ext(B_\mu)$, so that $sides(B_\mu) <\infty$ if and only if $B_\mu$ is a polygon.

\begin{corollary}\label{cor:nonpolygonal}
If $\mu \in \mathcal{M}_p$ for $p\in[\vec p_c,1)$ then $sides(B_\mu) = \infty$ and $N_p$ is an accumulation point of $ext(B_\mu)$.
\end{corollary}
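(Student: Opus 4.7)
The plan is to prove the stronger claim that $N_p$ is an accumulation point of $ext(B_\mu)$, from which $sides(B_\mu)=\infty$ follows immediately. I argue by contradiction, combining Theorem~\ref{thm: diff} with Marchand's characterization of $\partial B_\mu \cap \partial \mathcal{B}_1$.

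First I pin down the tangent at $N_p$. Since $B_\mu \subset \mathcal{B}_1$ and $N_p \in \partial \mathcal{B}_1$, the line $\{x+y=1\}$ is a supporting line of $B_\mu$ at $N_p$. By Theorem~\ref{thm: diff} the tangent at $N_p$ is unique, so it must coincide with this support line, of slope $-1$. On the side of $N_p$ opposite the flat edge (i.e.\ in the direction of increasing $x$-coordinate), the arc of $\partial B_\mu$ can then be parametrized as the graph of a concave, decreasing function $f$ on some $[N_x,\,N_x+\Delta]$ with $f(N_x)=N_y$ and $f'(N_x^+)=-1$, with $B_\mu$ lying locally below this graph.

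Suppose, toward a contradiction, that $N_p$ is \emph{not} an accumulation point of $ext(B_\mu)$, and choose $\delta>0$ so that no point $(x,f(x))$ with $x\in(N_x,N_x+\delta]$ is an extreme point of $B_\mu$. I claim that $f$ must be affine on $[N_x,N_x+\delta']$ for some $\delta'>0$. Fix $x_1\in(N_x,N_x+\delta)$; since $(x_1,f(x_1))$ is not extreme, it lies in the relative interior of a maximal line segment $[a,b]\subset\partial B_\mu$, and the endpoints of any maximal line segment of $\partial B_\mu$ are automatically extreme points of $B_\mu$ (otherwise two different supporting lines would coexist at such an endpoint). Choosing $x_1$ sufficiently close to $N_x$ forces $(a,f(a))$ into any preassigned neighborhood of $N_p$, so by our assumption $a=N_x$; hence $f$ is affine on $[N_x,b]$ for some $b>N_x$.

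Matching slopes at $N_x$, this affine piece must have slope $f'(N_x^+)=-1$, giving $f(x)=1-x$ on $[N_x,b]$. Consequently $\{(x,1-x):N_x\le x\le b\}\subset \partial B_\mu \cap \partial \mathcal{B}_1 \cap [0,\infty)^2$, strictly containing the segment $[M_p,N_p]$ (and, in the $p=\vec p_c$ case, the point $(1/2,1/2)$). This directly contradicts parts (3) and (4) of Theorem~\ref{thm:marchand1}, so $N_p$ must be an accumulation point of $ext(B_\mu)$, and in particular $sides(B_\mu)=\infty$. The substantive input is Theorem~\ref{thm: diff}; the rest is a local convex-analytic deduction combined with Marchand's sharp identification of the flat edge, so I do not anticipate any serious obstacle beyond careful bookkeeping of the one-sided derivatives and the two cases $p>\vec p_c$ versus $p=\vec p_c$.
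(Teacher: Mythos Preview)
Your proof is correct and follows the same approach as the paper: if $N_p$ were not an accumulation point of extreme points, the arc of $\partial B_\mu$ immediately past $N_p$ would be a line segment which, by Theorem~\ref{thm: diff}, must lie on $\{x+y=1\}$, contradicting Marchand's theorem. The paper's proof is a terse three-sentence version of exactly this; your convex-analysis elaboration is sound, though the ``choosing $x_1$ sufficiently close to $N_x$'' step is unnecessary---for any $x_1\in(N_x,N_x+\delta)$ the left endpoint of the maximal segment through $(x_1,f(x_1))$ is already an extreme point with $x$-coordinate in $[N_x,x_1)$, and hence must equal $N_x$.
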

\begin{proof}
If $B_\mu$ had only finitely many extreme points, there would be a closest one to the point $N_p$. It follows that the arc of $\partial B_\mu$ from $N_p$ to this point is a line segment. But Theorem~\ref{thm: diff} (and also symmetry of $B_\mu$ about the line $y=x$ in the case $p=\vec p_c$) implies that this line segment must be contained in the set $\{(x,y) ~:~ x+y=1\}$. This contradicts Marchand's theorem.
\end{proof}

\begin{remark}\label{othercases} In Remark~\ref{rem: othercases}, we show that the analogue of Theorem \ref{thm: diff} also holds in the cases of (a) \emph{directed} first-passage percolation, (b) FPP with weights on sites (instead of edges) and (c)   directed FPP with weights on sites. To deduce the analogue of Corollary \ref{cor:nonpolygonal} in these cases, one needs a similar version of Theorem \ref{thm:marchand1}. The directed case (a) was studied in \cite{Zhang3}. \end{remark}

\begin{remark} After completion of the paper, the authors discovered that a claim was made after the statement of Theorem 1.4 of \cite{Marchand} which would have implied differentiability of the boundary of the limit shape at $N_p$. This claim was, however, unproved. 
\end{remark}

For the rest of Section~\ref{sec: mainresults} we will describe the consequences of Theorem~\ref{thm: diff}.

\subsection{Growth models}

First-passage percolation is closely related to certain growth and competition models. In fact the original version of the shape theorem was proved by Richardson \cite{Richardson} for a simplified growth model now known as the original 1-type Richardson model. In this model on $\mathbb{Z}^2$, we suppose that the origin houses an infection at time $t=0$ and all other sites are healthy. At each subsequent time $t=1, 2, \ldots$, any healthy site with at least one infected neighbor becomes infected independently of all other sites with probability $p \in (0,1)$. Richardson proved a shape theorem for the infected region at time $t$ as $t \to \infty$ and believed, on the basis of computer simulations, that as $p$ varies from $1$ to $0$, the limit shape varies from a ``diamond to a disk.'' This model was shown to be equivalent to an FPP model with i.i.d. weights on sites with a geometric distribution \cite{DurrettLiggett}.

We can build an infection model based on edge FPP in an analogous manner. Let $(\tau_e)$ be a realization of passage times. We infect the origin at time $0$ and the infection spreads at unit speed across edges, taking time $\tau_e$ to cross the edge $e$. In the case that the edge weights are exponential, the memoryless property of the distribution implies that the growth of the infected region is equal in distribution to a time change of the so-called Eden process (introduced in \cite{Eden}). It is also known as the 1-type Richardson model. For general distributions this growth process is called a 1-type first-passage percolation model.

Building on the above definition, we may describe first-passage competition models, first defined by H\"aggstr\"om and Pemantle \cite{HP}. Fix $k$ distinct vertices $x_{1},\ldots,x_{k}\in\mathbb{Z}^{2}$ and at time $t=0$ infect site $x_i$ by an infection of type $i$. Each species spreads at unit speed as before, taking time $\tau_{e}$
to cross an edge $e\in\mathbb{E}$. An uninhabited site is exclusively
and permanently colonized by the first species that reaches it; that is,
$y\in\mathbb{Z}^{2}$ is occupied at time $t$ by the $i$-th species
if $\tau(y,x_{i})\leq t$ and $\tau(y,x_{i})<\tau(y,x_{j})$ for all
$j\neq i$.  Note that there may be sites which are never colonized, that is, those sites $y$ for which $\min_{1\leq i\leq k}\tau(y,x_i)$ is achieved by multiple $x_i$'s. Consider the set colonized
by the $i$-th species:\[
C_{i}=\{y\in\mathbb{Z}^{2}\,:\, y\mbox{ is eventually occupied by }i\}.\]
One says that $\mu$ \emph{admits coexistence of $k$ species} if for some choice of initial sites
$x_{1},\ldots,x_{k}$, \[
\mathbb{P}(|C_{i}|=\infty\mbox{ for all }i=1,\ldots,k)>0.\]
Coexistence of infinitely many species is defined similarly and clearly implies coexistence of $k$ species for any $k$.

In the past ten years, there has been a growing interest in Richardson-type models, for instance in questions related to the asymptotic shape of infected regions \cite{Gouere, Pimentel} and to coexistence \cite{GM, HP, Hoffman, Hoffman1}. When $\mu$ is the exponential distribution, H\"{a}ggstr\"{o}m and Pemantle \cite{HP} proved coexistence of 2 species (see \cite{DH} for a review of recent results on Richardson models, focused on exponential passage times). Shortly thereafter, Garet and Marchand \cite{GM} and Hoffman \cite{Hoffman1} independently extended these results to prove coexistence of 2 species for a broad class of translation invariant measures, including some non-i.i.d. ones.  Later, Hoffman \cite{Hoffman} demonstrated coexistence of $8$ species for a similarly broad class of measures by establishing a relation with the number of
sides of the limit shape in the associated FPP. In \cite{DH}, Damron and Hochman showed that it is possible to have infinite coexistence. As a consequence of Theorem~\ref{thm: diff}, infinite coexistence is not only possible but is necessary for all measures in $\mathcal{M}_p$.

\begin{theorem}\label{thm:coexistence}
If $\mu \in \mathcal{M}_p$ for $p\in[\vec p_c,1)$ then $\mu$ admits coexistence of infinitely many species.
\end{theorem}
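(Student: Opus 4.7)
The plan is to reduce this to Corollary~\ref{cor:nonpolygonal} via a general principle, established in Damron--Hochman~\cite{DH} building on Hoffman~\cite{Hoffman}, that an infinite sequence of exposed extreme points of $B_\mu$ forces infinite coexistence. First, by Theorem~\ref{thm:marchand1} the flat edge of $B_\mu$ in the first quadrant is \emph{exactly} $[M_p,N_p]$, so any extreme point of $B_\mu$ close enough to $N_p$ but distinct from it lies strictly outside this segment, hence strictly outside the percolation cone, and therefore admits a non-degenerate supporting half-plane. Combined with Corollary~\ref{cor:nonpolygonal}, which asserts that $N_p$ is an accumulation point of $ext(B_\mu)$, this produces an infinite sequence $v_1,v_2,\ldots$ of distinct exposed extreme points of $B_\mu$.

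Next I would invoke the Damron--Hochman coexistence criterion: for such a sequence one selects seed locations $x_i \in \mathbb{Z}^2$ along the rays $\{t v_i : t > 0\}$, pushed far enough out so that they are well-separated, and shows that with positive probability every $x_i$ colonizes an infinite region and all these regions are pairwise disjoint. The construction exploits that the normal cone to $B_\mu$ at each $v_i$ is a genuine wedge, so the shape theorem confines the colonized region of $x_i$ inside a narrow cone around the ray through $v_i$. A standard modification-of-initial-conditions argument in the spirit of H\"aggstr\"om--Pemantle~\cite{HP} then realizes the infinitely many coexistence events simultaneously on the same configuration, yielding coexistence of infinitely many species.

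The main obstacle is that the coexistence machinery in~\cite{DH} is written for a specific subfamily of measures, so one must confirm that it transfers to arbitrary $\mu \in \mathcal{M}_p$. The essential inputs are the shape theorem (Theorem~\ref{thm:limitshape}), the i.i.d.\ structure, finiteness of the mean (built into the definition of $\mathcal{M}$), and an infinite supply of exposed extreme points; the first three are immediate and the last is supplied by Corollary~\ref{cor:nonpolygonal}. The more delicate ingredient is the Kesten-type sublinear fluctuation bound used in~\cite{DH} to keep geodesics close to their asymptotic directions; this bound is available for all $\mu \in \mathcal{M}_p$ since its support is bounded below by $1$, so the argument goes through with at most cosmetic changes once the geometric input of Corollary~\ref{cor:nonpolygonal} is supplied.
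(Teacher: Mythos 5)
Your proposal takes essentially the same route as the paper: both reduce the theorem to Corollary~\ref{cor:nonpolygonal} (that $sides(B_\mu)=\infty$) and then invoke Hoffman~\cite{Hoffman} as extended by Damron--Hochman~\cite{DH}, which asserts coexistence of $k$ species whenever $sides(B_\mu)\ge k$ and handles the case of infinitely many species. The extra step you insert --- arguing that extreme points near $N_p$ but off the flat edge are automatically \emph{exposed}, hence admit a non-degenerate supporting half-plane --- does not actually follow (an extreme point need not be exposed), but it is also unnecessary: the criterion cited in the paper is stated in terms of the cardinality of $ext(B_\mu)$, which Corollary~\ref{cor:nonpolygonal} already supplies, so the reduction stands without that claim.
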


\begin{proof}
Hoffman showed in \cite{Hoffman} that coexistence of $k$ species occurs so long as $sides(B_\mu)\geq k$. His arguments were extended in \cite{DH} to cover the case of infinitely many species. The theorem then follows from  Corollary~\ref{cor:nonpolygonal}.
\end{proof}

To obtain more information about the spread of infections, we may track the edges that an infection crosses from its starting point. Accordingly, we define the {\it graph of infection} $\Gamma(0)\subseteq\mathbb{E}$ as the union over all
$x\in\mathbb{Z}^d$ of the edges of geodesics from $0$ to $x$. This terminology is consistent with the Richardson model when there are unique passage times, in which case it is a tree, but note that in general the graph of infection may also contain sites which were not infected; that is, those where a tie condition exists, and in this way we may obtain loops.
A graph has $m$ {\it ends} if, after removing a finite set of vertices, the induced graph contains at least $m$ infinite connected components, and, if there are $m$ ends for every $m\in\mathbb{N}$, we say there are infinitely many ends. Letting $K(\Gamma(0))$ be the number of ends in $\Gamma(0)$, Newman \cite{Newman} has conjectured for a broad class of $\mu$ that $K(\Gamma(0))=\infty$. Hoffman \cite{Hoffman} showed for continuous distributions that in general $K(\Gamma(0)) \geq 4$ almost surely. Damron and Hochman \cite{DH} then proved that there exist $\mu$ such that the graph of infection has infinitely many ends. Using their arguments, we have the following.

\begin{theorem}\label{thm:ends}
Let $\mu \in \mathcal{M}_p$ for $p\in[\vec p_c,1)$ be not purely atomic. Then almost-surely, $K(\Gamma(0)) = \infty$.
\end{theorem}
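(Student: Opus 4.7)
The plan is to follow the strategy of Damron--Hochman~\cite{DH}, feeding in Corollary~\ref{cor:nonpolygonal} as the substitute for the non-polygonality hypothesis they used. By that corollary, $N_p$ is an accumulation point of $ext(B_\mu)$, so we may pick a sequence of distinct extreme points $\xi_n \in ext(B_\mu)$ with $\xi_n \to N_p$. It then suffices to build, for each $n$, an infinite self-avoiding path of $\Gamma(0)$ emanating from the origin in asymptotic direction $\xi_n$, and to argue that for every $k$ the first $k$ such paths eventually separate into distinct connected components of $\Gamma(0)$ after removing a sufficiently large ball around the origin.

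First, for each extreme point $\xi$ of $B_\mu$, I would use the standard Newman-type construction to produce an infinite geodesic ray from $0$ sitting in $\Gamma(0)$ with asymptotic direction $\xi$: pick lattice points $x_m \sim m\xi$, take the corresponding finite geodesics (all of which lie in $\Gamma(0)$ by definition), extract a subsequential limit using tightness in bounded boxes, and invoke the extremality of $\xi$ together with Theorem~\ref{thm:limitshape} to conclude that the limiting ray has asymptotic direction $\xi$. Second, because distinct $\xi_n,\xi_m\in\partial B_\mu$ give rise to straight Euclidean rays $t\mapsto t\xi_n$ and $t\mapsto t\xi_m$ that diverge at linear rate, the shape theorem transfers this divergence to the associated geodesic rays in $\mathbb{Z}^2$; hence for $R$ large enough they lie in different connected components of $\Gamma(0)$ outside the Euclidean ball of radius $R$ about the origin. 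Iterating this up to $n=k$ yields $K(\Gamma(0))\geq k$ for every $k$, and thus $K(\Gamma(0)) = \infty$.

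The main obstacle is the non-uniqueness of geodesics caused by the atom $\mu(\{1\})=p$. In the continuous setting of \cite{DH}, uniqueness of finite geodesics makes $\Gamma(0)$ a tree, and this is what cleanly converts ``distinct geodesic rays'' into ``distinct ends''. With atoms present, $\Gamma(0)$ may contain loops and the correspondence requires care. This is precisely the role of the hypothesis that $\mu$ is \emph{not purely atomic}: its continuous component is supported on a positive i.i.d. density of edges, and a resampling/perturbation argument of the type developed in \cite{DH} shows that this continuous randomness is enough to break ties between competing subpaths, so that two geodesic rays with different asymptotic directions cannot merge and re-branch infinitely often. Concretely, my task would be to verify that the ends-argument of \cite{DH} uses the measure only through the existence of a non-degenerate continuous component and the limit shape only through an accumulation of extreme points; both inputs are now supplied by the hypothesis on $\mu$ and by Corollary~\ref{cor:nonpolygonal}, respectively, so the \cite{DH} argument transfers with only cosmetic modifications.
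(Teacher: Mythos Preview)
Your high-level plan---combine Corollary~\ref{cor:nonpolygonal} with the machinery of \cite{DH}---is exactly what the paper does. The paper's proof is a two-line black-box citation: \cite{DH} establishes that if $\mu$ is not purely atomic and $sides(B_\mu)\geq s$, then $K(\Gamma(0))\geq 4\lfloor (s-4)/12\rfloor$ almost surely; since $sides(B_\mu)=\infty$ by Corollary~\ref{cor:nonpolygonal}, the conclusion follows. Your final paragraph lands on precisely this, so at that level the proposal is correct and matches the paper.

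Where your proposal overreaches is in the middle two paragraphs, where you sketch what you believe the \cite{DH} argument to be. The ``Newman-type construction'' you describe---take subsequential limits of finite geodesics to $m\xi$ and conclude from extremality plus the shape theorem that the limiting ray has asymptotic direction $\xi$---does not work as stated. The shape theorem controls passage \emph{times}, not the geometry of individual geodesics, and extremality of $\xi$ by itself does not force a subsequential geodesic limit to be directed toward $\xi$; Newman's original argument for directedness required a uniform curvature hypothesis that is unavailable here. Likewise, your claim that ``the shape theorem transfers this divergence to the associated geodesic rays'' is not a direct consequence and is in fact where the real work lies. The Hoffman/Damron--Hochman method does not proceed by first proving directedness; it instead uses supporting linear functionals at extreme points to build Busemann-type quantities and competition/trapping arguments that separate geodesics into sectors, and it is this separation---not directedness per se---that yields distinct ends (hence the somewhat lossy constant $4\lfloor(s-4)/12\rfloor$). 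So keep your last paragraph and cite \cite{DH} as a black box, but drop the heuristic outline preceding it; as written it would not survive as a proof.
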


\begin{proof}
Theorem~\ref{thm:ends} follows directly from the following relation established in \cite{DH}. If $\mu$ is not purely atomic and $B_\mu$ has at least $s$ sides, then the number of ends of $\Gamma(0)$ is almost-surely at least $4 \lfloor (s-4)/12 \rfloor$.
\end{proof}

\begin{remark}
In the case that $\mu \in \mathcal{M}_p$ is purely atomic, it is unknown and, furthermore, unclear whether or not $K(\Gamma(0))=\infty$. The method of \cite{DH} heavily relies on the existence of a continuous part of the edge distribution. 
\end{remark}

\subsection{Fluctuations}

We now proceed to analyze the fluctuations of the limit shape outside the percolation cone. 

The growth of $B(t)$ in first-passage percolation as $t \to \infty$ gives a simple model for the growth of a random interface. The leading order behavior of this growth is given by the shape theorem. When inspecting second order behavior, we can begin with dimension $d=1$. In this case, $\tau(0,n)$ is equal to $\tau(\gamma)$, where $\gamma$ is the deterministic path with vertices $0,1,\ldots, n$. Therefore $\tau(0,n)$ is a sum of i.i.d. variables and will obey diffusive scaling (that is, after centering and scaling by $\sqrt n$, it will converge in distribution to a non-trivial law). This same scaling is proved to be correct in thin cylinders \cite{CD}. 

In two and higher dimensions, the passage time of each deterministic path $\gamma$ is still a sum of i.i.d. random variables, but the passage time from $0$ to a vertex $v$ is a minimum over these correlated path variables. Because of the correlation structure, physicists predict \cite{KPZ} that the centered variable $\tau(0,ne_1)$, where $e_1$ is the first coordinate vector, should not obey diffusive scaling, but a certain sub-diffusive scaling (that is, the appropriate exponent of $n$ is $1/3$, not $1/2$). 

Relatively very little is known rigorously in this direction. Excluding an exactly solvable case in a related model where the $1/3$ exponent is known and one even has an explicit form for the limiting distribution \cite{Johan}, one does not even have a lower bound of the form $n^\alpha$ for some $\alpha>0$. The first substantial lower bounds were given by Pemantle and Peres \cite{PP} and Newman and Piza \cite{NewmanPiza}. In the latter work it is shown that if $\mu$ has the properties (a) if $\inf~supp(\mu) = 0$ and $\mu(\{0\})<p_c$ (the threshold for unoriented percolation) and (b) if $b=\inf~supp(\mu)$ and $\mu(\{b\})<\vec p_c$ (the threshold for oriented percolation) then for each non-zero $x\in \mathbb{Z}^2$, there exists $C_x>0$ such that for all $n$, $\Var \tau(0,nx) \geq C_x \log n$. Newman and Piza state that for $\mu \in M_p$ with $p > \vec p_c$, the statement is not true for $x$ in the direction of the percolation cone. However it may be true for other angles. Zhang extended the theorem to measures in $\mathcal{M}_p$ in the $x$-direction. The following is the main theorem of \cite{Zhang}. 

\begin{theorem}[Zhang]\label{thm:Zhang}
Let $\mu \in \mathcal{M}_p$ for $p \in [\vec p_c,1)$ and suppose that $\E e^{\tau_e\beta}<\infty$ for some $\beta>0$. There exists $C>0$ such that for all $n$,
\[
\Var~\tau(0,(n,0)) > C \log n\ .
\]
\end{theorem}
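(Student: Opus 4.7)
The plan is to adapt the martingale and edge-resampling technique of Newman-Piza~\cite{NewmanPiza}, whose original proof requires $\mu(\{b\}) < \vec p_c$, to the present setting $\mu \in \mathcal{M}_p$. Writing $T_n := \tau(0,(n,0))$, the starting point is that the direction $(1,0)$ lies strictly outside the percolation cone $[M_p, N_p]$ of Theorem~\ref{thm:marchand1}, so by the shape theorem $\E T_n / n \to 1/\xi$ for some $\xi < 1$. In particular $\E T_n \geq n/\xi$ for large $n$, a linear excess over the trivial bound $T_n \geq n$. This excess is the quantitative substitute for the Newman-Piza hypothesis: it rules out the straight horizontal $b$-edge path as a geodesic and, more generally, prevents any oriented $b$-cluster from being optimal in the horizontal direction.

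With this input in hand, I would set up the Doob martingale. Fix a box $\Lambda_n$ of side proportional to $n$ which, by the exponential moment assumption, contains every geodesic from $0$ to $(n,0)$ with probability $1 - o(1)$. Enumerate the edges of $\Lambda_n$ as $e_1, e_2, \ldots$ and set $M_k := \E[T_n \mid \tau_{e_1}, \ldots, \tau_{e_k}]$. Then
\[
\Var T_n = \sum_k \E\bigl[(M_k - M_{k-1})^2\bigr].
\]
The task is to exhibit $\Omega(\log n)$ indices for which the summand is bounded below by a universal constant. Equivalently, using the freedom in ordering edges, it suffices to produce $\Omega(\log n)$ pairwise disjoint bounded patches of edges such that resampling the edges in each patch changes $T_n$ by at least some $\delta > 0$ with uniform positive probability.

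The key ingredient is a multi-scale geometric construction. For each $k = 1, \ldots, \lfloor \log_2 n \rfloor$ select a patch $P_k$ of $O(1)$ edges at spatial scale $2^k$ along the segment from $0$ to $(n,0)$, with the $P_k$ pairwise disjoint. Inside each $P_k$ I would identify an event $E_k$, depending only on $\{\tau_{e'} : e' \in P_k\}$, on which every geodesic crossing $P_k$ incurs an extra cost of at least $\delta$ compared to $E_k^c$. Enumerating the edges of $P_1$ first, then $P_2$, and so on, the Doob sum collects a contribution of order $1$ at each scale and yields $\Var T_n \geq c \delta^2 \log n$.

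The principal obstacle is establishing this uniform per-scale ``detour-cost'' estimate. At large scales $2^k$, oriented $b$-clusters could in principle span $P_k$ and let the geodesic bypass it at no cost, collapsing the argument. To exclude this I would use the strict inequality $\xi < 1$ in a quantitative form: an oriented $b$-detour forced to cover horizontal distance $\ell$ must use at least $\ell/\alpha_p$ unit-weight edges, and hence takes passage time at least $\ell/\alpha_p > \ell$, so such detours cannot beat a direct horizontal crossing unless the direct crossing is atypically slow. The exponential-moment hypothesis on $\mu$ is used both to confine geodesics to $\Lambda_n$ and to rule out rare, very expensive configurations that would inflate the tail of $T_n$.
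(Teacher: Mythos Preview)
Your proposal has a genuine gap at the localization step. You want, at each dyadic scale $2^k$, a patch $P_k$ of $O(1)$ edges such that resampling the edges of $P_k$ changes $T_n$ by at least $\delta$ with probability bounded below uniformly in $k$. But nothing forces the geodesic to touch a prescribed bounded patch: with only $O(1)$ edges in $P_k$, any geodesic can detour around $P_k$ at cost $O(1)$, and since geodesics from $0$ to $(n,0)$ fluctuate transversally on a scale that is not known to be $O(1)$, you have no control on $\mathbb{P}(\text{geodesic crosses }P_k)$ uniformly in $k$. Your paragraph on oriented $b$-detours does not address this: the statement ``an oriented $b$-detour forced to cover horizontal distance $\ell$ must use at least $\ell/\alpha_p$ unit-weight edges'' is not correct (an oriented path covering horizontal displacement $\ell$ uses exactly $\ell$ right-steps plus some up-steps, so its passage time is at least $\ell$, not $\ell/\alpha_p$), and in any case it speaks to the cost of long detours, not to whether the geodesic meets a fixed bounded patch.

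The paper's route is structurally different and avoids this localization problem. First, via Marchand's strict-inequality comparison (Theorem~\ref{thm:marchand2}), it proves Lemma~\ref{lem:nononeedges}: any geodesic from $0$ to a point in $MC$ (a closed arc of $\partial B_\mu$ avoiding the flat edge) contains at least $\rho M$ edges of weight $\geq y>1$. Second, and this is where Theorem~\ref{thm: diff} enters, the non-polygonal limit shape guarantees extreme points of $B_\mu$ arbitrarily close to $N_p$, which is what makes Proposition~\ref{prop:trapping} work: geodesics to $nv_\theta$ first hit $M\partial B_\mu$ only in the good arc $MC$, for all $M$ in a range $[n^\zeta,n/2]$. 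One then sums $\sum_k \mathbb{P}(F_k)^2$ not over bounded patches but over geometric annuli $\hat M_{i+1}B_\mu\setminus \hat M_i B_\mu$ with $\hat M_{i+1}=J\hat M_i$; Jensen's inequality and the linear lower bound $\rho \hat M_{i+1}$ on heavy edges in each annulus yield a constant contribution per annulus, and the $\sim\log n$ annuli give the bound. The essential missing ingredient in your sketch is precisely this pair of lemmas: a deterministic linear count of heavy edges on geodesics in each scale, and a geometric trapping argument to ensure those heavy edges are spread across scales rather than concentrated near one endpoint.
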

\noindent
Zhang also predicts (see Remark~6 in \cite{Zhang}) that a similar bound holds in any direction outside the percolation cone. In Section \ref{sec:Newman}, we verify this prediction, extending Zhang's theorem to all angles outside the percolation cone assuming exponential moments for the edge distribution. This is Corollary~\ref{cor: exp} below. 

To state these results, let $\theta_p$ be the unique angle such that the line segment connecting $0$ and $N_p$ has angle $\theta_p$ with the $x$-axis. Let $w_\theta$ be the vector 
\[
w_\theta = (\cos \theta, \sin \theta)
\]
and for fixed $x \in \mathbb{R}^2$, define 
\[
g(x) = \lim_{n \to \infty} (1/n) \E \tau(0,nx)\ .
\]
This limit exists by the shape theorem. (We will explain more about the function $g$ in Section~\ref{sec:Newman}, for instance that it is a natural norm on $\mathbb{R}^2$ associated to the model. For the moment, however, the reader may ignore the definition of $g$ and the condition \eqref{eq:sad} which uses it and instead focus on Corollaries~\ref{cor: exp} and \ref{cor: beta}, whose assumptions allow to remove this condition.) We begin with a variance lower bound that requires only two moments for the edge distribution but a bound \eqref{eq:sad} on the so-called ``non-random fluctuations'' of the model. 
\begin{theorem}\label{thm:unbounded}
Let $\mu \in \mathcal{M}_p$ for $p\in[\vec p_c,1)$ and $\theta \in [0,\theta_p)$. Suppose that $\E \tau_e^2<\infty$ and that there exists $\kappa < 1$ such that for all large $n$, 
\begin{equation}\label{eq:sad}
\E\tau(0,nw_\theta) - ng(w_\theta) < n^{\kappa} \ .
\end{equation}
Then there exists $C_\theta>0$ such that for all $n$,
\begin{equation}\label{eq:loglowerbound}
\Var~\tau(0,nw_\theta)\geq C_\theta \log n\ .
\end{equation}
\end{theorem}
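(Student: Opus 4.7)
The plan is to adapt the martingale-based variance lower bound of Newman--Piza~\cite{NewmanPiza}, as extended by Zhang~\cite{Zhang} for $\mu \in \mathcal{M}_p$ in the axis direction, to arbitrary directions $\theta \in [0,\theta_p)$. The key new input is Theorem~\ref{thm: diff}: differentiability of $\partial B_\mu$ at $N_p$ provides precisely the geometric localization of geodesics needed to run the Newman--Piza argument outside the percolation cone.

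First I would set up the martingale decomposition. Enumerate the edges $e_1,e_2,\ldots$ of $\Z^2$ inside a polynomial-size box that contains every geodesic from $0$ to $nw_\theta$ with overwhelming probability; this reduction uses the second-moment assumption and standard Kesten-type tail estimates for geodesic length. Let $\mathcal{F}_k$ be the $\sigma$-algebra generated by $\tau_{e_1},\ldots,\tau_{e_k}$, set $X_n = \tau(0,nw_\theta)$ and $\Delta_k = \E[X_n\mid\mathcal{F}_k] - \E[X_n\mid\mathcal{F}_{k-1}]$, so that $\Var X_n = \sum_k \E\Delta_k^2$. A standard resampling argument (Kesten's inequality, in the form used by \cite{NewmanPiza,Zhang}) provides the edge-influence lower bound
\[
\E\Delta_k^2 \;\geq\; c\,\Var(\tau_{e_k})\,\PP(e_k \in \gamma_n)^2,
\]
where $\gamma_n$ is any geodesic from $0$ to $nw_\theta$.

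The central step is to show $\sum_k \PP(e_k \in \gamma_n)^2 \gtrsim \log n$ via a dyadic decomposition over transverse scales. For each scale $2^j \leq n$, the non-random fluctuation bound \eqref{eq:sad} limits how far $\gamma_n$ can wander transversely to $w_\theta$, while Theorem~\ref{thm: diff} prevents $\gamma_n$ from drifting onto the flat edge of $\partial B_\mu$. Concretely, since $\theta<\theta_p$, the point where the ray $\{rw_\theta:r\geq 0\}$ meets $\partial B_\mu$ lies strictly outside the segment $[M_p,N_p]$; differentiability at $N_p$ guarantees that no supporting line to $B_\mu$ in direction $w_\theta$ can also support the flat edge. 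By the usual duality between the shape and the geodesic direction, this forces $\gamma_n$ to concentrate around the straight segment $[0,nw_\theta]$ and rules out the otherwise catastrophic scenario where $\gamma_n$ absorbs a macroscopic number of edges of weight $1$ along the percolation-cone boundary (which would collapse $\PP(e_k \in \gamma_n)^2$ and $\Var(\tau_{e_k})$ to contribute nothing to the sum). A Cauchy--Schwarz argument summed across the $\asymp \log n$ dyadic scales then gives the required bound, which combined with the martingale identity yields \eqref{eq:loglowerbound}.

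The main obstacle I anticipate is converting the \emph{qualitative} differentiability statement of Theorem~\ref{thm: diff} into a \emph{quantitative} localization estimate strong enough to drive the dyadic summation. The natural route is a contradiction argument: a geodesic that either wanders too far or hugs the percolation-cone boundary for a macroscopic fraction of its length would admit a competing path whose passage time contradicts \eqref{eq:sad}, the hypothesized non-random fluctuation bound. Making this precise uniformly in $\theta$ on compact subsets of $[0,\theta_p)$, and propagating the resulting bounds through the edge-influence inequality with the sole assumption of finite second moment (rather than exponential moments), is where I expect the technical work to lie.
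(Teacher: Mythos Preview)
Your high-level architecture is right---Newman--Piza martingale bound plus a geometric decomposition into $\asymp\log n$ disjoint regions---but there is a genuine gap in the mechanism you propose, and differentiability at $N_p$ does not do what you say it does.

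The influence inequality $\E\Delta_k^2 \geq c\,\Var(\tau_{e_k})\,\PP(e_k\in\gamma_n)^2$ is \emph{not valid} for $\mu\in\mathcal{M}_p$ in the form you wrote. The resampling argument behind it requires that an edge on the geodesic can have its weight \emph{lowered}; but $\inf\,supp(\mu)=1$ and $\mu(\{1\})\geq\vec p_c$, so a geodesic may consist entirely of weight-$1$ edges, in which case no edge contributes anything. The paper handles this by replacing $F_k=\{e_k\in\gamma_n\}$ with $F_k=\{\tau_{e_k}\geq y\text{ and }e_k\in\gamma_n\}$ for some fixed $y>1$ with $\mu([y,\infty))>0$, and applying the Newman--Piza theorem with $D_k^0=\{\tau_{e_k}=1\}$, $D_k^1=\{\tau_{e_k}\geq y\}$. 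This yields $\Var\,\tau(0,nw_\theta)\geq c\sum_k\PP(F_k)^2$, but now one must show that geodesics contain \emph{linearly many} edges of weight $\geq y$. That is the content of Lemma~\ref{lem:nononeedges}, and its proof has nothing to do with differentiability at $N_p$: it uses Marchand's strict comparison theorem (Theorem~\ref{thm:marchand2}) applied to the measure $\mu'$ obtained by adding $1$ to every weight $\geq y$. Your proposal never mentions this ingredient, and the ``catastrophic scenario'' you describe (the geodesic hugging the cone boundary) is not the actual obstruction---even a geodesic perfectly aligned with $w_\theta$ could a priori use only weight-$1$ edges.

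Where differentiability \emph{is} used is more indirect than you suggest. Knowing only Lemma~\ref{lem:nononeedges} gives $\sum_k\PP(F_k)\gtrsim n$, which after Cauchy--Schwarz over $O(n^2)$ edges yields only a constant lower bound on the variance. To get $\log n$ one needs the non-one edges to be spread across $\asymp\log n$ disjoint annuli $\hat M_{i+1}B_\mu\setminus\hat M_iB_\mu$ with $\hat M_{i+1}/\hat M_i$ a fixed constant. Lemma~\ref{lem:nononeedges} applies to geodesics landing on a closed arc of $\partial B_\mu$ \emph{bounded away from} $[M_p,N_p]$; to feed the portion of $\gamma_n$ inside $\hat M_iB_\mu$ into it, one must first show that $\gamma_n$ exits $\hat M_iB_\mu$ through such an arc. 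That is Proposition~\ref{prop:trapping}, and its proof uses Corollary~\ref{cor:nonpolygonal} (hence Theorem~\ref{thm: diff}) to produce an extreme point $v_{\theta_0}$ of $B_\mu$ with $\theta<\theta_0<\theta_p$, which in turn guarantees a supporting functional $f$ with $f(v_\theta)=1$ but $\max_D f<1$ on the arc $D$ near the cone. So differentiability enters via the existence of extreme points accumulating at $N_p$, not via any ``duality between the shape and the geodesic direction'' or tube-type localization around the segment $[0,nw_\theta]$.
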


The assumption \eqref{eq:sad} is known in two cases: (a) if $\E e^{\tau_e \beta} < \infty$ for some $\beta > 0$ with any $\theta$ (Theorem~3.2 of \cite{Alexander}) and (b) if $\E \tau_e^{1+\beta}<\infty$ for some $\beta >0$ and $\theta = 0$ (Theorem~3 of \cite{Zhang2}).  Therefore we can state these special cases. The first shows the logarithmic lower bound is valid for all directions outside the percolation cone and the second allows to reduce the moment condition in Zhang's Theorem~\ref{thm:Zhang}. 

\begin{corollary}\label{cor: exp}
Let $\mu \in \mathcal{M}_p$ for $p\in[\vec p_c,1)$ and suppose that $\E e^{\tau_e\beta} <\infty$ for some $\beta >0$.  Then for all directions $\theta \in [0,\theta_p)$, there exists $C_\theta >0$ such that for all $n$, \eqref{eq:loglowerbound} holds.
\end{corollary}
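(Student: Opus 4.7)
The plan is to obtain Corollary~\ref{cor: exp} as an immediate application of Theorem~\ref{thm:unbounded}, by showing that the exponential moment assumption verifies both hypotheses of that theorem in every direction $\theta$ outside the percolation cone.

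First, note that the exponential moment bound $\E e^{\tau_e \beta} < \infty$ trivially implies $\E \tau_e^2 < \infty$, so the second moment hypothesis of Theorem~\ref{thm:unbounded} is automatic.

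Second, we need to verify the non-random fluctuation bound \eqref{eq:sad} for each $\theta \in [0, \theta_p)$, i.e.\ the existence of some $\kappa < 1$ for which $\E \tau(0, n w_\theta) - n g(w_\theta) < n^{\kappa}$ for all large $n$. Here we cite Alexander's bound (Theorem~3.2 of \cite{Alexander}), mentioned in the remark following Theorem~\ref{thm:unbounded}, which guarantees under the assumption of finite exponential moments that $\E \tau(0, n w_\theta) - n g(w_\theta) = O(\sqrt{n \log n})$ uniformly in direction. In particular, one can take any $\kappa \in (1/2, 1)$ and the inequality \eqref{eq:sad} holds for all large $n$, for every $\theta$ — crucially, without any restriction excluding the percolation cone. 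Applying Theorem~\ref{thm:unbounded} for each such $\theta \in [0, \theta_p)$ then produces the constant $C_\theta > 0$ and the lower bound \eqref{eq:loglowerbound}.

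There is no real obstacle in this deduction; all the mathematical content is packaged inside Theorem~\ref{thm:unbounded} (which in turn leverages the differentiability at $N_p$ from Theorem~\ref{thm: diff} together with a Newman--Piza-type martingale argument) and inside Alexander's non-random fluctuation estimate. The corollary simply records the strongest corollary of Theorem~\ref{thm:unbounded} available once Alexander's hypothesis is in force, showing that the logarithmic lower bound on the variance extends to every direction outside the percolation cone and thereby confirming the prediction of Newman--Piza \cite{NewmanPiza} and Zhang \cite{Zhang} in the strongest form currently accessible.
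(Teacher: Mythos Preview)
Your proposal is correct and matches the paper's own justification essentially verbatim: the paper deduces the corollary directly from Theorem~\ref{thm:unbounded} by observing that Alexander's Theorem~3.2 verifies \eqref{eq:sad} under the exponential moment assumption (and the second-moment condition is then automatic). There is nothing to add.
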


\begin{corollary}\label{cor: beta}
Let $\mu \in \mathcal{M}_p$ for $p\in[\vec p_c,1)$ and suppose that $\E \tau_e^2 <\infty$.  Then there exists $C_0>0$ such that for all $n$, \eqref{eq:loglowerbound} holds with $\theta = 0$.
\end{corollary}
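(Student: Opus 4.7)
The plan is to reduce Corollary~\ref{cor: beta} directly to Theorem~\ref{thm:unbounded} applied in direction $\theta=0$, since $0\in[0,\theta_p)$. Theorem~\ref{thm:unbounded} has two hypotheses: a second moment assumption on the edge weights, and the non-random fluctuation bound \eqref{eq:sad}. The first is immediate from the assumption $\E \tau_e^2<\infty$, so the only work is to check \eqref{eq:sad} at $\theta=0$.

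For that I would appeal to the discussion immediately following Theorem~\ref{thm:unbounded}, which records Zhang's Theorem~3 of \cite{Zhang2}: whenever $\E \tau_e^{1+\beta}<\infty$ for some $\beta>0$, the non-random fluctuation estimate $\E\tau(0,n e_1)-ng(e_1)<n^\kappa$ holds for some $\kappa<1$ and all large $n$. Since $\E\tau_e^2<\infty$ is exactly the case $\beta=1$ of Zhang's moment hypothesis, his bound applies in direction $w_0=e_1$, verifying \eqref{eq:sad} with the required exponent $\kappa<1$.

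With both hypotheses of Theorem~\ref{thm:unbounded} in place for $\theta=0$, the theorem yields a constant $C_0>0$ such that $\Var~\tau(0,n w_0)\geq C_0\log n$ for all $n$, which is exactly the statement of the corollary. Thus Corollary~\ref{cor: beta} requires no new ideas beyond combining Theorem~\ref{thm:unbounded} with Zhang's non-random fluctuation result; the real content, and the only obstacle, is in Theorem~\ref{thm:unbounded} itself (and, one step back, in Alexander's and Zhang's bounds on $\E\tau(0,nw_\theta)-ng(w_\theta)$). Accordingly, the ``proof'' of the corollary is essentially a citation, and the only thing one has to be careful about is matching the moment condition $\E\tau_e^2<\infty$ to the $\beta>0$ appearing in \cite{Zhang2}, which is trivial with $\beta=1$.
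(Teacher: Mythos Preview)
Your argument is correct and matches the paper's own approach: the authors state Corollary~\ref{cor: beta} as an immediate consequence of Theorem~\ref{thm:unbounded} together with case~(b) of the preceding discussion, namely Zhang's Theorem~3 in \cite{Zhang2}, which supplies \eqref{eq:sad} at $\theta=0$ under the moment condition $\E\tau_e^{1+\beta}<\infty$ (here $\beta=1$). No additional proof is given in the paper beyond this citation, so your reduction is exactly what is intended.
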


\begin{remark} Inside the percolation cone, the behavior of the variance of the passage time is different from \eqref{eq:loglowerbound}. For any $\theta \in (\theta_p, \frac{\pi}{4})$, it is known that  $\Var \; \tau(0,n\omega_\theta)$ is bounded above independently of $n$ (see special case 2 of \cite{NewmanPiza} or (1.11) in \cite{Zhang}).  
\end{remark}

For the other theorems, we need to define two exponents. We shall use the definitions from \cite{NewmanPiza}. Let
\begin{equation}\label{eq:chitheta}
\chi_\theta = \sup \{\gamma\geq 0~:~ \text{ for some } C>0, \Var(\tau(0,nw_\theta)) \geq Cn^{2\gamma} \text{ for all } n\}
\end{equation}
and let $M_n(\theta)$ be the set of all sites in $\mathbb{Z}^2$ belonging to some geodesic from $0$ to $nw_\theta$. Let $L_\theta$ be the line through the origin containing $w_\theta$ and for $\gamma >0$ let $\Lambda_n^\gamma(\theta)$ be the set
\[
\Lambda_n^\gamma(\theta) = \{ z \in \mathbb{R}^d~:~ dist(z,L_\theta)\leq n^\gamma\}\ .
\]
The exponent $\xi_\theta$ is defined as
\begin{equation}\label{eq:xitheta}
\xi_\theta = \inf\{\gamma>0~:~ \text{ for some } C>0, ~\mathbb{P}(M_n(\theta) \subset \Lambda_n^\gamma)\geq C \text{ for all large } n\}\ .
\end{equation}

\begin{theorem}\label{thm:powerlaw}
If $\mu \in \mathcal{M}_p$ for $p \in [\vec p_c,1)$ and $\E \tau_e^2<\infty$, then for each $\theta \in [0,\theta_p)$,
\[
\chi_\theta \geq \frac{1-\xi_\theta}{2}\ .
\]
\end{theorem}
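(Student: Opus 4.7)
The plan is to adapt the argument of Newman and Piza \cite{NewmanPiza} for the inequality $\chi\geq(1-\xi)/2$. In their setting, the key ingredient is a uniform lower bound on the curvature of $\partial B_\mu$ at the boundary point $P_\theta:=w_\theta/g(w_\theta)$; this is not available for the measures in $\mathcal{M}_p$. In its place we will use the differentiability of $\partial B_\mu$ at $N_p$ guaranteed by Theorem~\ref{thm: diff}, together with convexity of $B_\mu$ and the flat-edge description of Theorem~\ref{thm:marchand1}.

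Fix $\theta\in[0,\theta_p)$ and $\gamma>\xi_\theta$. By the definition of $\xi_\theta$ in \eqref{eq:xitheta}, there is $c>0$ such that, for all large $n$, every geodesic from $0$ to $nw_\theta$ lies inside the tube $\Lambda_n^{\gamma}(\theta)$ with probability at least $c$. Let $L$ be a supporting line to $B_\mu$ at $P_\theta$ and let $I_n$ be the segment of $nL$ of length $\sim n^\gamma$ centered at $nP_\theta$. On the containment event, any geodesic from $0$ to $nw_\theta$ first crosses $I_n$ at a lattice site $Y_n$, so that $\tau(0,nw_\theta)=\tau(0,Y_n)+\tau(Y_n,nw_\theta)$. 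The goal is then to lower-bound $\Var\bigl(\min_{y\in I_n\cap\Z^2}\tau(0,y)\bigr)$ by $c\,n^{1-\gamma}$ and to transfer this bound to $\tau(0,nw_\theta)$; letting $\gamma\downarrow\xi_\theta$ would then yield the claim.

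Two inputs are needed. The first is a geometric estimate of the form $g(y)-g(nP_\theta)\leq C n^{2\gamma-1}$ uniformly for $y\in I_n$. Combined with concentration estimates for the non-random fluctuations (e.g.\ those of Alexander \cite{Alexander}), it yields $|\E\tau(0,y)-\E\tau(0,nw_\theta)|\leq C n^{2\gamma-1}$ uniformly on $I_n\cap\Z^2$, so that the minimum over $I_n\cap\Z^2$ and $\tau(0,nw_\theta)$ are tightly coupled in mean on the containment event. The second input is a martingale / Efron--Stein lower bound $\Var\bigl(\min_{y\in I_n\cap\Z^2}\tau(0,y)\bigr)\geq c\,n^{1-\gamma}$, exploiting $\E\tau_e^2<\infty$ and $|I_n\cap\Z^2|\sim n^\gamma$; this is by now a standard application of the Kesten--Newman--Piza technology and should carry over essentially verbatim.

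The main obstacle is the first input. In the Newman--Piza setting it follows from a second-order Taylor expansion of $g$ at $P_\theta$ under the curvature hypothesis; here we must extract it from convexity and differentiability alone. The idea is: by convexity, $\partial B_\mu$ lies on one side of every supporting line; by Theorem~\ref{thm: diff}, $\partial B_\mu$ is differentiable at $N_p$, with tangent line equal to the flat edge through $M_p$ and $N_p$; by general convex analysis the Gauss map of $\partial B_\mu$ is then continuous along the arc from $N_p$ to $P_\theta$. Together with Marchand's description (Theorem~\ref{thm:marchand1}) of the flat edge, this rules out corners and forces enough regularity of the tangent direction near $P_\theta$ to produce a quadratic-type estimate along the tangential slab $I_n$. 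Turning this qualitative picture into the quantitative bound $g(y)-g(nP_\theta)\leq C n^{2\gamma-1}$ is the delicate geometric step of the proof, and it is the place where the differentiability result of Theorem~\ref{thm: diff} is used essentially.
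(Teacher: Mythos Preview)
Your proposal rests on a misreading of both the Newman--Piza argument and of where the paper's new input enters.

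First, the inequality $\chi_\theta\geq(1-\xi_\theta)/2$ in \cite{NewmanPiza} does \emph{not} use curvature; you are confusing it with the separate bound $\xi\geq 1/2$ (their Theorem~7), which does. Consequently your entire geometric program---extracting a quadratic estimate $g(y)-g(nP_\theta)\leq Cn^{2\gamma-1}$ along a tangential slab---is aimed at the wrong target. Moreover, that estimate is simply false in the generality you claim: differentiability of $\partial B_\mu$ at a single point gives no quantitative second-order control, and Theorem~\ref{thm: diff} only asserts differentiability at $N_p$, not at $P_\theta$ for arbitrary $\theta\in[0,\theta_p)$. Your ``continuity of the Gauss map'' remark does not rescue this: a convex $C^1$ boundary need not satisfy any quadratic bound.

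Second, your ``second input'' is not standard here and is in fact the heart of the difficulty. For $\mu\in\mathcal{M}_p$ the usual Newman--Piza lower bound fails because geodesics may consist entirely of weight-$1$ edges, so decreasing an edge weight is impossible. The paper's actual proof does not use Theorem~\ref{thm: diff} at all. It starts from the martingale bound \eqref{eq:Fk}, restricts the sum to edges in the tube $\mathcal{D}_n\subset\Lambda_n^{\xi'}$ of cardinality $\sim n^{1+\xi'}$, applies Cauchy--Schwarz, and then invokes Lemma~\ref{lem:nononeedges} (proved via Marchand's strict comparison, Theorem~\ref{thm:marchand2}) to guarantee that any geodesic from $0$ to $nw_\theta$ contains at least $\rho n$ edges with weight $\geq y>1$. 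This yields
\[
\Var\,\tau(0,nw_\theta)\;\geq\;\frac{c}{|\mathcal{D}_n|}\Bigl(\E\sum_{e_k\in\mathcal{D}_n} I(F_k)\Bigr)^2\;\geq\;\frac{c(\rho n)^2}{n^{1+\xi'}}\;=\;c\,n^{1-\xi'},
\]
and letting $\xi'\downarrow\xi_\theta$ finishes. The missing idea in your attempt is precisely Lemma~\ref{lem:nononeedges}: linearly many heavy edges on geodesics outside the percolation cone. That is the replacement for the Newman--Piza hypothesis $\mu(\{b\})<\vec p_c$, and it comes from Marchand's theorem, not from differentiability of the limit shape.
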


Using a curvature assumption, we may obtain a theorem that is much stronger than Theorem~\ref{thm:unbounded}. This assumption was used in \cite{NewmanPiza} to show power-law divergence of the passage-time variance in all directions for a different class of edge-weight distributions than ours. For any angle $\theta$, we say that $\theta$ is a {\it direction of curvature} if there exists a closed Euclidean ball $B$ of positive radius (and any center) such that (a) $\partial B_\mu$ intersects $B$ at least on the line $L_\theta$ and (b) $B$ contains $B_\mu$. 

\begin{theorem}
Let $\mu \in \mathcal{M}_p$ for $p \in [\vec p_c,1)$. If $\mathbb{E} e^{\tau_e\beta}< \infty$ for some $\beta>0$ and $\theta \in [0,\theta_p)$ is a direction of curvature, then there exists $C_\theta>0$ such that for all $n$,
\begin{equation}\label{eq:powerlaw}
\Var \tau(0,nw_\theta)\geq C_\theta n^{1/4}\ .
\end{equation}
\end{theorem}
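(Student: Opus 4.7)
The plan is to reduce the theorem to a bound on the wandering exponent and then invoke Theorem~\ref{thm:powerlaw}. Since $\mathbb{E} e^{\tau_e\beta}<\infty$ implies $\mathbb{E}\tau_e^2<\infty$, Theorem~\ref{thm:powerlaw} yields $\chi_\theta \geq (1-\xi_\theta)/2$ for our $\theta\in[0,\theta_p)$. It therefore suffices to prove $\xi_\theta \leq 3/4$, since this produces $\chi_\theta \geq 1/8$ and, via the definition \eqref{eq:chitheta} of $\chi_\theta$, the desired bound $\Var\,\tau(0,nw_\theta) \geq C_\theta n^{1/4}$.

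To establish $\xi_\theta \leq 3/4$ we would follow the curvature argument of Newman--Piza~\cite{NewmanPiza}. Let $B$ be the Euclidean ball from the curvature hypothesis, so $B_\mu \subset B$ and $\partial B \cap \partial B_\mu \supset L_\theta\cap\partial B_\mu$. Since $g$ is the norm with closed unit ball $B_\mu$, comparing $B_\mu$ with the tangent Euclidean ball near their common boundary point produces a quadratic excess of the form
\[
g(z) + g(nw_\theta - z) \;\geq\; n\,g(w_\theta) + c\,\frac{d(z,L_\theta)^2}{n}
\]
for some $c>0$ depending on the radius of $B$, valid for all $z$ with $d(z,L_\theta)\leq n^\gamma$ and $\gamma<1$. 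Consequently, any path from $0$ to $nw_\theta$ through a vertex at transverse distance at least $n^\gamma$ has $g$-length exceeding $ng(w_\theta)$ by at least of order $n^{2\gamma - 1}$; by concentration of the $\tau$-lengths of the two subsegments around their $g$-values, the same is essentially true of $\tau(0,nw_\theta)$ on the event $\{M_n(\theta)\not\subset \Lambda_n^\gamma(\theta)\}$.

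The exponential moment hypothesis closes the argument: Alexander's concentration theorem~\cite{Alexander} provides $\mathbb{E}\tau(0,nw_\theta) \leq ng(w_\theta) + C\sqrt{n\log n}$ together with Gaussian tails about the mean. If $\gamma>3/4$ were admissible in the definition of $\xi_\theta$, the previous paragraph would force an excess of order $n^{2\gamma-1}\gg \sqrt{n\log n}$ with probability bounded away from zero, contradicting Alexander's bound; hence $\xi_\theta\leq 3/4$. The main technical difficulty lies in this last step, namely turning the geometric quadratic excess into a genuine lower bound on $\tau(0,nw_\theta)$ that holds with positive probability rather than only in expectation; this is precisely where finite exponential moments (as opposed to just $L^2$) are needed, and it also explains why the statement is restricted to directions of curvature, since on the flat edge no such quadratic excess is available and the strategy breaks down.
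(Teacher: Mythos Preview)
Your approach is essentially the paper's: the paper simply cites \cite[Theorem~7]{NewmanPiza} together with the Kesten--Alexander fluctuation upper bounds and notes that these results remain valid in $\mathcal{M}_p$; your proposal is a sketch of exactly that curvature argument.

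One caveat: the inference ``$\chi_\theta \geq 1/8$ yields, via \eqref{eq:chitheta}, $\Var\,\tau(0,nw_\theta) \geq C_\theta n^{1/4}$'' is not formally correct, since $\chi_\theta$ is a supremum that need not be attained; from $\chi_\theta \geq 1/8$ you only get $\Var \geq C_\epsilon n^{1/4-\epsilon}$ for every $\epsilon>0$. Newman--Piza avoid this by not routing through the exponent inequality. The curvature estimate combined with Alexander's bound produces an explicit containment $M_n(\theta)\subset \Lambda_n^{\gamma}(\theta)$ with probability bounded away from zero for a concrete $\gamma$ (essentially $3/4$ up to logarithmic factors), and one then plugs that specific $\gamma$ directly into the Cauchy--Schwarz step (as in the proof of Theorem~\ref{thm:powerlaw} in Section~5) to obtain the stated power. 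Your sketch contains all the necessary ingredients; it just needs to be phrased quantitatively rather than through the soft exponent relation.
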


\begin{proof}
This follows just as in \cite{NewmanPiza} by using the upper bound on shape fluctuations from Kesten \cite{Kesten} and Alexander \cite{Alexander}, along with \cite[Theorem~7]{NewmanPiza}. All of these theorems are valid for measures in $\mathcal{M}_p$.
\end{proof}

\begin{remark}
Using the construction of \cite{NewmanPiza}, let $B^r$ be the Euclidean ball of radius $r$ centered at the origin and set $\rho = \inf\{r>0~:~B_\mu \subset B^r\}$. The intersection points of $\partial B^\rho$ and $\partial B_\mu$ are in directions of curvature of $B_\mu$. From Theorem~\ref{thm: diff}, given $p \geq \vec p_c$ and $\mu \in \mathcal{M}_p$, the boundary $\partial B_\mu$ is differentiable at $M_p$ and $N_p$ (and at their reflections about the coordinate axes). This implies that for $p>\vec p_c$ these points cannot be in directions of curvature. Combined with the fact that no direction in the percolation cone can be a direction of curvature, we see that for these measures (assuming the exponential moment condition), there exists $\theta \in [0,\theta_p)$ such that \eqref{eq:powerlaw} holds.
\end{remark}

The rest of the paper is organized as follows. In Section~\ref{sec:thmdiff} we prove Theorem~\ref{thm: diff}. Next, in Section \ref{sec:Newman} we prove Theorem~\ref{thm:unbounded} and we describe why the initial work of \cite{NewmanPiza} was not able to cover the case treated in this paper. In the last section, we prove Theorem~\ref{thm:powerlaw}.

\section{Proof of Theorem~\ref{thm: diff}}\label{sec:thmdiff}

In this part of the paper we prove Theorem~\ref{thm: diff}. For the reader's convenience we will  divide it into small subsections as follows. In the first part we give a brief idea about how we will proceed to prove the theorem. After that, we introduce an embedded oriented percolation model and an essential lemma adapted from Durrett \cite{Durrett} that will allow our construction of special paths called bypasses. In Section~\ref{subsec:bypass}, we give the inductive construction of the bypass paths and in the last two sections we prove Theorem~\ref{thm: diff} in the cases $p> \vec p_c$ and $p = \vec p_c$, respectively.

\subsection{Strategy of the proof of Theorem~\ref{thm: diff}.}

Our goal in this section is to show that if $\mu \in \mathcal{M}_p$ then the limit shape boundary, $\partial B_\mu$, is differentiable at the point $N_p$. For this purpose, fix a linear functional $f$ such that (a) $\max_{z \in B_\mu} f(z) = 1$ and (b) $f(N_p)=1$. By convexity of $B_\mu$, the line 
\[
L_1(f) = \{(x,y) \in \mathbb{R}^2~:~f(x,y) =1\}
\]
is tangent to $B_\mu$ at $N_p$. 

Let us make an assumption that the line $\overline L = \{(x,y) \in \mathbb{R}^2~:~x+y=1\}$ is not the only tangent line to $\partial B_\mu$ at the point $N_p$. Therefore we may choose $f$ such that it is strictly increasing on this line, that is, for fixed $z \in \mathbb{R}^2$, the function
\begin{equation}\label{eq: teq}
t \mapsto f(z+t(1,-1)) \text{ is strictly increasing in } t\ .
\end{equation}
In other words, 
\begin{equation}\label{defCf} 
C_f := f(1,-1)>0.
\end{equation}
 We will see later that this assumption gives a contradiction. We will then conclude that the only tangent line to $B_\mu$ at the point $N_p$ is $\overline L$ and therefore $\partial B_\mu$ is differentiable at this point.

The main idea is that if $L_1$ is a tangent line to $B_\mu$ then for very large $n$, the optimal path from $0$ to $nL_1$ will, up to linear order in $n$, have the same travel time as a specific {\it maximal} path constructed from an oriented percolation process. We will show that this leads to a contradiction because we can make modifications of such a maximal path which take advantage of small sets of edges called {\it $f$-bypasses}, and which give a faster route from $0$ to $nL_1$. These $f$-bypasses are constructed depending on the angle between the lines $L_1$ and $\overline L$, and the amount of time saved by taking them is strictly positive on a linear scale. Our proof is inspired by the technique and original ideas introduced by Marchand in the proof of \cite[Theorem~1.4]{Marchand}, whose notation we will follow. In addition, an important lemma comes from Durrett's proof \cite{Durrett} of strict monotonicity of the mapping $p \mapsto N_p$.

\medskip

\subsection{Coupling with Oriented Percolation}\label{sec:Durr}

The existence of a flat edge for measures in $\mathcal{M}_p$ is related to a coupling between first-passage percolation and {\it oriented percolation} that we describe now. If we call edges $e$ with $\tau_e=1$ {\it open} and all other edges {\it closed}, then for $p>\vec p_c$, there exists an infinite oriented cluster of open edges \cite{Durrett}. This means that with positive probability, an infinite number of vertices $v$ can be reached from the origin using paths that only move up and right, and whose edges have passage time equal to $1$. In fact, each such path must be a geodesic and this implies that the passage time from $0$ to $v$ is simply equal to the $\ell^1$-distance from $0$ to $v$. The set of vertices thus reached is called the {\it oriented percolation cone} and we will say that an angle $\theta$ is in the oriented percolation cone if there is an infinite oriented path of 1-edges $(x_1,x_2, \ldots)$ such that the argument (angle) of $x_n$ with respect to the $x$-axis converges to $\theta$. Marchand's theorem states that the point of $\partial B_\mu$ in the direction $\theta$ is on the boundary of the $\ell^1$ unit ball if and only if $\theta$ is in the percolation cone.

Precisely, for each realization of passage times $(\tau_e)$, we define edge variables $(\eta_e)$ by the rule that $\eta_e=1$ if $\tau_e=1$ and $\eta_e=0$ if $\tau_e\neq 1$. We write $x \to y$ if there exists an oriented path from $x$ to $y$ with all edges having $\eta$-value equal to $1$.

%
We call a  subset $S$ of $\mathbb{Z}^2$ a \emph{starting set} if it is contained in some integer translate of the one sided ray $\widetilde{\mathbb{N}} := \{(-k,k)~:~k \in \mathbb{N}\}$.  Write
\[
D_n = \{(x,y) \in \mathbb{Z}^2~:~ x+y=n\}\ .
\]
For each $n \in \mathbb{N}$, we define the collection $\xi_n(S)$ of accessible diagonal sites from a starting set $S \subseteq D_m$ as the set of vertices $x \in D_{m+n}$ such that there exists $s\in S$ with $s \to x$.

 If $S$ is a starting set then for each $n \in \mathbb{N}$, $M_n(S)$ is defined to be the point of $\xi_n(S)$ with largest $x$-coordinate. If $\xi_n(S) = \varnothing$ we leave $M_n(S)$ undefined. We also set $f_n(S) = f(M_n(S))$ in the case that $M_n(S)$ is defined; otherwise, we set $f_n(S) = -\infty$. Note that by \eqref{eq: teq}, if $S$ is a starting set then $M_n(S)$ is the point of $\xi_n(S)$ with strictly largest $f$-value.

Next we prove a lemma about the oriented percolation process with edge variables $(\eta_e)$. It is adapted from Durrett \cite[Equation~(13)]{Durrett}.

\begin{lemma}\label{lem: Durrett}
 If $A$ and $B$ are infinite subsets of $\widetilde{\mathbb{N}}$ with $A \subset B$ then for all $p \in (0,1)$, all $n$ and all $m \geq 1$,
\[
\E \left[ f_n(A\cup E_m) - f_n(A) \right] \geq \E \left[ f_n(B\cup E_m) - f_n(B) \right] \geq C_f m\ ,
\]
where $E_m = \{(k,-k)~:~1 \leq k \leq m\}$ and $C_f$ is the constant defined in \eqref{defCf}.
\end{lemma}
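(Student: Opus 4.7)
The plan is to treat the two inequalities separately: the first by submodularity of $\max$, and the second by a translation-invariance computation.  The structural observation common to both is that $\xi_n(S) = \bigcup_{s \in S} \xi_n(\{s\})$, which combined with \eqref{eq: teq} (ensuring that $M_n(\cdot)$ selects the point of strictly largest $f$-value on its diagonal) yields
\[
f_n(S) \;=\; \max_{s \in S} f_n(\{s\}) \qquad\text{and}\qquad f_n(S \cup E_m) - f_n(S) \;=\; \bigl(f_n(E_m) - f_n(S)\bigr)^+
\]
for every starting set $S$, with the convention $f_n(\{s\}) = -\infty$ when $\xi_n(\{s\}) = \varnothing$.

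For the first inequality, I would use the standard monotone coupling in which all oriented processes share a single configuration $(\eta_e)$.  Then $A \subset B$ forces $\xi_n(A) \subset \xi_n(B)$ and hence $f_n(A) \le f_n(B)$ pointwise; since $x \mapsto (f_n(E_m) - x)^+$ is nonincreasing, this yields
\[
\bigl(f_n(E_m) - f_n(A)\bigr)^+ \;\ge\; \bigl(f_n(E_m) - f_n(B)\bigr)^+
\]
pointwise, and taking expectations gives the first inequality.

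For the second inequality, I would first reduce to an extreme case by applying the first inequality with $(A,B)$ replaced by $(B, \widetilde{\mathbb{N}})$—valid because $B \subset \widetilde{\mathbb{N}}$—to obtain
\[
\E\bigl[f_n(B \cup E_m) - f_n(B)\bigr] \;\ge\; \E\bigl[f_n(\widetilde{\mathbb{N}} \cup E_m) - f_n(\widetilde{\mathbb{N}})\bigr].
\]
Next, the key geometric observation (adapted from Durrett's equation (13)) is that $\widetilde{\mathbb{N}} \cup E_m$ coincides with the translate $\widetilde{\mathbb{N}} + (m,-m)$: both are the set of anti-diagonal points $(j,-j)$ with $j \le m$.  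Hence translation invariance of the i.i.d.\ family $(\eta_e)$ gives $M_n(\widetilde{\mathbb{N}} + (m,-m)) \overset{d}{=} M_n(\widetilde{\mathbb{N}}) + (m,-m)$, and linearity of $f$ together with $C_f = f(1,-1)$ produces
\[
\E f_n(\widetilde{\mathbb{N}} \cup E_m) - \E f_n(\widetilde{\mathbb{N}}) \;=\; C_f m,
\]
which is the desired lower bound.

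The only technicalities that require a small amount of care are well-definedness of the expectations: $\xi_n(A)$, $\xi_n(B)$, and $\xi_n(\widetilde{\mathbb{N}})$ should be a.s.\ nonempty (this follows from infiniteness of the starting sets via a Borel-Cantelli argument applied to independent all-open right-paths of length $n$ from well-separated starts, each of which occurs with probability at least $p^n > 0$), and $f_n$ is integrable because $M_n$ lies on the deterministic diagonal $D_{n}$ with bounded coordinates.  I do not anticipate a substantive obstacle: once the set identity $\widetilde{\mathbb{N}} \cup E_m = \widetilde{\mathbb{N}} + (m,-m)$ and the submodularity representation of $f_n$ are in place, the proof is a clean combination of these two short ingredients.
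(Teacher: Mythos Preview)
Your proposal is correct and follows essentially the same route as the paper: both arguments hinge on the identity $f_n(S\cup E_m)-f_n(S)=(f_n(E_m)-f_n(S))_+$, the monotonicity $f_n(A)\le f_n(B)$ for the first inequality, and a reduction to $S=\widetilde{\mathbb N}$ followed by translation invariance for the second. Your explicit observation that $\widetilde{\mathbb N}\cup E_m=\widetilde{\mathbb N}+(m,-m)$ is exactly what underlies the paper's pointwise identity $f_n(\widetilde{\mathbb N}\cup E_m)(T^m\omega)=f_n(\widetilde{\mathbb N})(\omega)+mC_f$; you phrase it as a distributional equality, which is equivalent. One small correction: $M_n(\widetilde{\mathbb N})$ does \emph{not} have bounded coordinates (the $x$-coordinate is bounded above by $n$ but has no deterministic lower bound), so integrability of $f_n(\widetilde{\mathbb N})$ requires the exponential-tail estimate implicit in your Borel--Cantelli remark rather than boundedness.
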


\begin{proof}
Clearly for sets $C$ and $D$ in $\widetilde{\mathbb{N}}$ (finite or infinite) we have $\xi_n(C\cup D)= \xi_n(C) \cup \xi_n(D)$. Therefore by \eqref{eq: teq},
\[
f_n(C\cup D) = \max\{f_n(C),f_n(D)\}\ .
\]
Now if $f_n(C) > -\infty$ (that is, $\xi_n(C) \neq \varnothing$),
\begin{eqnarray*}
f_n(C\cup D) - f_n(C) &=& \max\{f_n(C),f_n(D)\} - f_n(C) \\
&=& \begin{cases}
0&\text{ if } f_n(C) \geq f_n(D) \\
f_n(D)-f_n(C)& \text{ if } f_n(C) < f_n(D)
\end{cases} \\
&=& (f_n(D)-f_n(C))_+\ .
\end{eqnarray*}
Here $r_+$ is the positive part of the real number $r$. If $A$ and $B$ are infinite,  then since $p>0$, $f_n(A)$ and $f_n(B)$ are finite almost surely for all $n$. By \eqref{eq: teq} and the assumption that $A \subset B$, 
\[
f_n(B\cup E_m) - f_n(B) = (f_n(E_m)-f_n(B))_+ \leq (f_n(E_m)-f_n(A))_+ = f_n(A \cup E_m) - f_n(A)\ .
\]
Taking expectation of both sides gives the first part of the lemma. For the second part we simply use translation invariance. By the first part, it suffices to prove it for $A= \widetilde{\mathbb{N}}$. Let $T$ be the translation of $\mathbb{Z}^2$ that maps the origin to the point $(1,-1)$. For a configuration $\omega$ of passage times, we define $T(\omega)$ by the rule
\[
\tau_e(T(\omega)) = \tau_{T^{-1}(e)}(\omega)\ .
\]
Clearly the measure on passage time configurations is invariant under $T$. Writing $f_n(A)(\omega)$ for the value of the variable $f_n(A)$ in the configuration $\omega$, we see that since $\xi_n(A) \neq \varnothing$ for all $n$ almost surely,
\[
f_n(A \cup E_m)(T^m(\omega)) - f_n(A)(\omega) = mf(1,-1) \text{ almost surely for all } n \text{ and } m \geq 1\ .
\]
Therefore
\[
\E\left[ f_n(A\cup E_m) - f_n(A)\right] = \E \left[f_n(A\cup E_m)(T^m(\omega)) - f_n(A)(\omega)\right] = mf(1,-1)\ .
\]
\end{proof}

\subsection{Construction of the $f$-bypasses}\label{subsec:bypass}

From now on, we fix an integer $a\geq 2$. We first define a `good configuration' around a vertex.

\begin{definition}
We say that there is the (C$_a$)-configuration around the vertex $(x,y)$ in $\mathbb{Z}^2$ if the following occurs.
\begin{itemize}
\item The edges $\langle(x,y),(x+1,y)\rangle, \langle(x+1,y),(x+1,y-1)\rangle$ and $\langle(x+1,y-1),(x+2,y-1)\rangle$ have $\eta$-value equal to 1, that is, they are open for the embedded oriented percolation model.
\item The edges $\langle(x,y),(x,y+1)\rangle, \langle(x+1,y),(x+2,y)\rangle$ and $\langle(x+1,y),(x+1,y+1)\rangle$ and have $\eta$-value equal to 0, that is, they are closed for  the embedded oriented percolation model.
\item For each vertex $w$ in the set $\{(u,v)~:~u+v=x+y \text{ and } y+1 \leq v \leq  y+a-2\}$, each edge with $w$ as either a left endpoint or a bottom endpoint has $\eta$-value equal to 0. (For $a=2$, this condition is not used.)
\item Each edge between pairs of nearest-neighbor vertices in the set 
\[
\{(u,v) ~:~u \geq x+2, ~v \geq y-1, \text{ and } \|(u,v)-(x+2,y-1)\|_1 \leq a-1\}
\]
has $\eta$-value equal to $1$. (Here $\|\cdot \|_1$ represents the $l^1$ norm.)
\end{itemize}
\end{definition}

\noindent
See Fig.~\ref{fig:Caconfig} for an illustration of the event that there is the (C$_a$)-configuration around the vertex $M_{an-a}^0$. 

\vspace{0.2cm}
Notice that the probability of a (C$_a$)-configuration around a point $(x,y)$ is positive independently of $(x,y)$. Call this probability $\rho_a$. 

We will use the (C$_a$)-configurations to construct our bypasses. First, let us define the set
\[
S^a = \{(u,v)\in \mathbb{Z}^2~:~u \geq 2,~v \geq -1, \text{ and }\|(u,v) -(2,-1)\|_1 = a-1\}\ .
\]
The cardinality of $S_a$ is equal to $a$.

\definecolor{red}{rgb}{1,0,0}
\definecolor{uququq}{rgb}{0.25,0.25,0.25}
\definecolor{xdxdff}{rgb}{0.49,0.49,1}
\definecolor{qqqqcc}{rgb}{0,0,0.8}
\definecolor{qqqqff}{rgb}{0,0,1}
\begin{figure}
\centering
\begin{tikzpicture}[line cap=round,line join=round,>=triangle 45,x=0.95cm,y=0.95cm]
\clip(-5.34,-2.1) rectangle (7.13,8.02);
\draw [dotted,domain=-5.34:7.13] plot(\x,{(--2-1*\x)/1});
\draw [dotted,domain=-5.34:7.13] plot(\x,{(--6-2*\x)/2});
\draw [dotted,domain=-5.34:7.13] plot(\x,{(--8-2*\x)/2});
\draw [dotted,domain=-5.34:7.13] plot(\x,{(--5-1*\x)/1});
\draw [dotted,domain=-5.34:7.13] plot(\x,{(--6-1*\x)/1});
\draw [dotted,domain=-5.34:7.13] plot(\x,{(--7-1*\x)/1});
\draw (0,2)-- (1,2);
\draw (1,2)-- (1,1);
\draw [dash pattern=on 2pt off 2pt] (1,2)-- (2,2);
\draw [dash pattern=on 2pt off 2pt] (1,2)-- (1,3);
\draw [dash pattern=on 2pt off 2pt] (0,2)-- (0,3);
\draw [dash pattern=on 2pt off 2pt] (0,3)-- (-1,3);
\draw [dash pattern=on 2pt off 2pt] (-1,3)-- (-1,4);
\draw [dash pattern=on 2pt off 2pt] (-1,4)-- (-2,4);
\draw [dash pattern=on 2pt off 2pt] (-2,5)-- (-2,4);
\draw [dash pattern=on 2pt off 2pt] (-3,5)-- (-2,5);
\draw (2,2)-- (5,2);
\draw (3,1)-- (3,2);
\draw (2,1)-- (2,2);
\draw (4,1)-- (4,2);
\draw (2,3)-- (2,2);
\draw (2,5)-- (2,3);
\draw (2,3)-- (4,3);
\draw (3,4)-- (3,2);
\draw (4,3)-- (4,2);
\draw (5,1)-- (5,2);
\draw (6,1)-- (1,1);
\draw (1,1)-- (2,1);
\draw [dash pattern=on 2pt off 2pt] (-3,6)-- (-3,5);
\draw (3,4)-- (2,4);
\draw [color=qqqqcc] (2,0)-- (6,0);
\draw [color=qqqqcc] (-3.84,6)-- (-3.84,2);
\draw [->] (-1.46,0.75) -- (-0.36,1.66);
\draw (-2.01,0.86) node[anchor=north west] {$\mathit{M_{a\tau_1-a}^0}$};
\draw (4.79,4.33) node[anchor=north west] {$\mathit{S_{a\tau_1}^0}$};
\draw (3.27,-0.1) node[anchor=north west] {$\mathit{a-1}$};
\draw (-5.1,4.15) node[anchor=north west] {$\mathit{a-1}$};
\draw [->,color=qqqqcc] (6.39,1.66) -- (2.94,5.09);
\draw [->,color=qqqqcc] (2.94,5.09) -- (6.39,1.66);
\fill [color=qqqqff] (1,1) circle (1.5pt);
\fill [color=qqqqff] (2,1) circle (1.5pt);
\fill [color=qqqqff] (3,1) circle (1.5pt);
\fill [color=qqqqff] (4,1) circle (1.5pt);
\fill [color=qqqqff] (5,1) circle (1.5pt);
\fill [color=qqqqff] (6,1) circle (1.5pt);
\draw [color=red] (0,2)-- ++(-2.5pt,-2.5pt) -- ++(7.0pt,7.0pt) ++(-7.0pt,0) -- ++(7.0pt,-7.0pt);
\fill [color=qqqqff] (1,2) circle (1.5pt);
\fill [color=qqqqff] (2,2) circle (1.5pt);
\fill [color=qqqqff] (3,2) circle (1.5pt);
\fill [color=qqqqff] (4,2) circle (1.5pt);
\fill [color=qqqqff] (5,2) circle (1.5pt);
\fill [color=qqqqff] (0,3) circle (1.5pt);
\fill [color=qqqqff] (1,3) circle (1.5pt);
\fill [color=qqqqcc] (-1,3) circle (1.5pt);
\fill [color=qqqqcc] (-1,4) circle (1.5pt);
\fill [color=qqqqcc] (-2,4) circle (1.5pt);
\fill [color=qqqqcc] (-2,5) circle (1.5pt);
\fill [color=qqqqcc] (-3,5) circle (1.5pt);
\fill [color=qqqqcc] (2,3) circle (1.5pt);
\fill [color=qqqqcc] (2,5) circle (1.5pt);
\fill [color=qqqqcc] (4,3) circle (1.5pt);
\fill [color=qqqqcc] (3,4) circle (1.5pt);
\fill [color=qqqqcc] (-3,6) circle (1.5pt);
\fill [color=qqqqcc] (2,4) circle (1.5pt);
\fill [color=qqqqcc,shift={(2,0)},rotate=90] (0,0) ++(0 pt,2.25pt) -- ++(1.95pt,-3.375pt)--++(-3.9pt,0 pt) -- ++(1.95pt,3.375pt);
\fill [color=qqqqff,shift={(6,0)},rotate=270] (0,0) ++(0 pt,2.25pt) -- ++(1.95pt,-3.375pt)--++(-3.9pt,0 pt) -- ++(1.95pt,3.375pt);
\fill [color=qqqqff,shift={(-3.84,6)}] (0,0) ++(0 pt,2.25pt) -- ++(1.95pt,-3.375pt)--++(-3.9pt,0 pt) -- ++(1.95pt,3.375pt);
\fill [color=qqqqff,shift={(-3.84,2)},rotate=180] (0,0) ++(0 pt,2.25pt) -- ++(1.95pt,-3.375pt)--++(-3.9pt,0 pt) -- ++(1.95pt,3.375pt);
\fill [color=qqqqcc] (3.02,2.98) circle (1.5pt);
\end{tikzpicture}
\caption{A depiction of the (C$_a$)-configuration around the vertex $M_{a\tau_1-a}^0$. The dashed bonds have $\eta$-value zero and the full bonds have $\eta$-value one.}
\label{fig:Caconfig}
\end{figure}
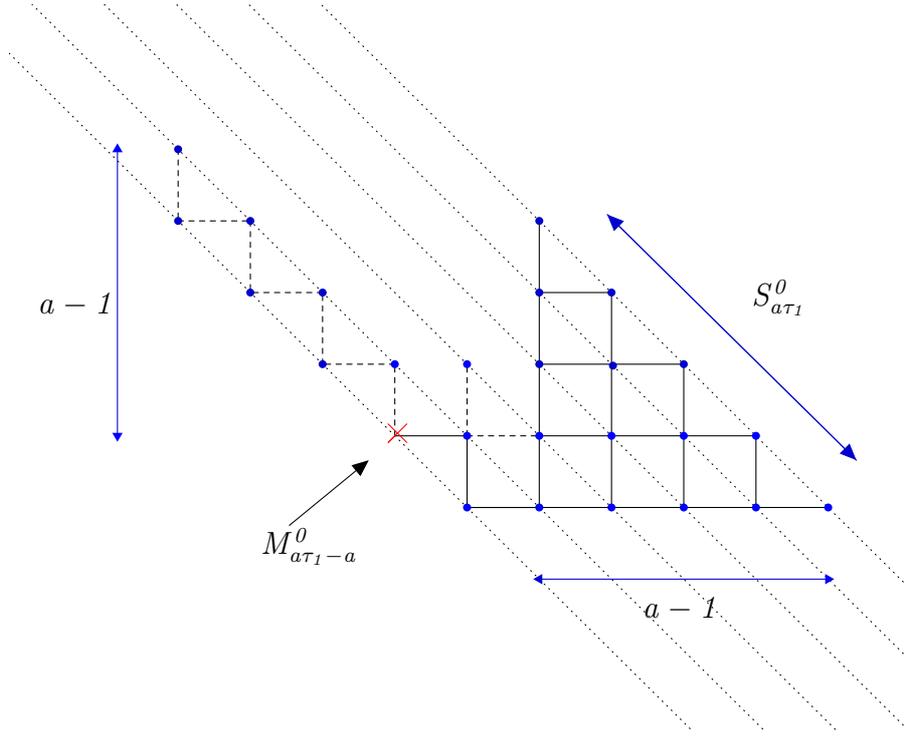

Let $S$ be an infinite starting set. To make the notation lighter we set $\xi_n^0=\xi_n(S)$ and note that since $p>0$, $\xi_n^0 \neq \varnothing$ almost surely for every $n$. Set $f_n^0=f_n(S)$ and let $M_n^0=M_n(S)$.  We will recall the dependency of these random variables on $S$ when it is necessary. Let $\tau_1=\tau_1(S)$ be the smallest $n$ such that we have a (C$_a$)-configuration around $M_{an-a}^0$. Notice that $\tau_1$ is a stopping time with respect to the filtration $(\mathcal{G}_n)$, where
\[
\mathcal{G}_n = \sigma(\{\tau_e : e \text{ has both endpoints in the halfspace } x+y\leq n\})\ .
\]

For each point $z \in \xi_{a\tau_1}^0$ there exists an (oriented) path with passage time equal to $a\tau_1$ from the set $S$ to $z$. Now, look at Figure \ref{fig:Caconfig}. For each point $z' \in M_{a\tau_1-a}^0 + S^a$, there exists a path (non-oriented) from $S$ to $z'$ with passage time equal to $a\tau_1+2$. It is important to notice that, because the (C$_a$)-configuration is defined specifically so that 
\begin{equation}\label{eq: twosets}
\{M_{a\tau_1-a}^0 + S^a\} \cap \xi_{a\tau_1}^0 = \emptyset\ ,
\end{equation}
these points $z'$ cannot be in the oriented cluster of open edges of $S$. The union of the two disjoint sets in \eqref{eq: twosets} is our new collection of reachable vertices and the first $f$-bypass paths are the open paths from $M_{a\tau_1-a}^0$ to the points in  $S_{a\tau_1}^0:= M_{a\tau_1-a}^0 + S_a$. Precisely, we set
\[
\xi_n^1 = \begin{cases}
\xi_n^0& \text{ if }n \leq a\tau_1-1 \\
\xi_{n-a\tau_1}(\xi_{a\tau_1}^0 \cup S_{a\tau_1}^0)& \text{ if }n \geq a\tau_1
\end{cases}\ .
\]

Now we define $M_n^1$ to be the point in $\xi_n^1$ with largest $x$-coordinate and
\[
f_n^1 = f_n^1(S) := f(M_{n}^1).\]

A crucial estimate on the  between $\E f_{an}^1$  and $\E f_{an}^0$ is the following (recall that these numbers depend on our initial choice of $S$):

\begin{lemma}\label{l:festimate} For all $n \in \N$ and any infinite starting set $S$,
\begin{equation*}
\E f_{an}^1(S) - \E f_{an}^0 (S) \geq aC_f \P(\tau_1(S) \leq n).
\end{equation*}
\end{lemma}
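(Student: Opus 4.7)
The plan is to condition on the stopping time $\tau_1$ and the past, and then apply Lemma~\ref{lem: Durrett} to the oriented percolation strictly above level $a\tau_1$ after a suitable lattice translation. First, $\xi_{an}^1 = \xi_{an}^0$ on $\{\tau_1>n\}$, so only the event $\{\tau_1 \leq n\}$ contributes. On this event, the identity $\xi_n(C\cup D) = \xi_n(C)\cup\xi_n(D)$ (used in the proof of Lemma~\ref{lem: Durrett}, applied at level $m=a\tau_1$) yields $\xi_{an}^1 = \xi_{an}^0 \cup \xi_{an-a\tau_1}(S_{a\tau_1}^0)$, and the same positive-part computation as in that proof gives
\[
f_{an}^1 - f_{an}^0 = \bigl(f_{an-a\tau_1}(S_{a\tau_1}^0) - f_{an}^0\bigr)_+ .
\]

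Next I would condition on $\{\tau_1=k\}$ and on $\mathcal{G}_{ak}$, which is large enough to determine the $(C_a)$-configuration around $M_{ak-a}^0$, since every edge in that configuration has both endpoints at level $\leq ak$. Under this conditioning, $\xi_{ak}^0$ and $M_{ak-a}^0=(x_0,y_0)$ are deterministic, while the edges strictly above level $ak$ remain i.i.d.\ with the original law. Set $u_0 := M_{ak-a}^0 + (1,a-1) \in D_{ak}$. The decomposition $S^a = (1,a-1) + E_a$ immediately gives $S_{ak}^0 - u_0 = E_a$, and because the product measure is invariant under lattice translations, translating the edges above $ak$ by $-u_0$ reduces the conditional expectation to a calculation for oriented percolation above $D_0$ with starting set $(\xi_{ak}^0-u_0)\cup E_a$.

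The crucial geometric claim is that $\xi_{ak}^0 - u_0 \subseteq \widetilde{\mathbb{N}}$, equivalently that every point of $\xi_{ak}^0$ has $x$-coordinate $\leq x_0+1$. From $M_{ak-a}^0$, the $(C_a)$-configuration closes the up-edge out of $(x_0,y_0)$ together with both the up- and right-edges out of $(x_0+1,y_0)$, so the oriented cluster of $M_{ak-a}^0$ dies at level $x_0+y_0+1<ak$ and contributes nothing to $\xi_{ak}^0$. For each of the $a-2$ points $(x_0-j,y_0+j)$ with $1\leq j\leq a-2$, all outgoing up- and right-edges are closed by $(C_a)$, so these points also contribute nothing. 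Every remaining point of $\xi_{ak-a}^0$ has $x$-coordinate $\leq x_0-(a-1)$, and $a$ steps of oriented percolation increase the $x$-coordinate by at most $a$, giving the bound.

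Once this inclusion is established, $\xi_{ak}^0 - u_0$ is an infinite subset of $\widetilde{\mathbb{N}}$, and Lemma~\ref{lem: Durrett} applied with $B = \xi_{ak}^0 - u_0$ and $m=a$ yields
\[
\E\bigl[f_{an-ak}\bigl((\xi_{ak}^0-u_0)\cup E_a\bigr) - f_{an-ak}(\xi_{ak}^0-u_0)\bigr] \geq aC_f .
\]
Summing over $\{\tau_1=k\}$ for $1\leq k\leq n$ then produces $\E[f_{an}^1 - f_{an}^0] \geq aC_f\,\P(\tau_1\leq n)$. The main obstacle is the geometric step, i.e.\ verifying the inclusion above: this is the only place where the full strength of the $(C_a)$-configuration is used, and it is what turns the bypass construction into a legitimate input for Lemma~\ref{lem: Durrett}.
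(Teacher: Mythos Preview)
Your argument is correct and follows essentially the same route as the paper's proof: restrict to $\{\tau_1\le n\}$, rewrite the difference as $f_{an-a\tau_1}(\xi_{a\tau_1}^0\cup S_{a\tau_1}^0)-f_{an-a\tau_1}(\xi_{a\tau_1}^0)$, condition on the past, translate, and invoke Lemma~\ref{lem: Durrett}. The paper compresses all of this into one line and simply asserts that after a suitable translation $\xi_{a\tau_1}^0$ becomes an infinite subset $A$ of $\widetilde{\mathbb{N}}$ while $S_{a\tau_1}^0$ becomes $E_a$; you have supplied the missing geometric verification of that inclusion by tracing through the closed edges of the $(C_a)$-configuration, which is exactly what is needed.
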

\begin{proof}
 We will apply Lemma~\ref{lem: Durrett}. By definition of $\xi_n^1$ and $\tau_1$, 
\begin{eqnarray}\label{eq: onestep}
\E f_{an}^1 - \E f_{an}^0 &=& \E \left[(f_{an}^1 - f_{an}^0) I(\tau_1 \leq n) \right] \nonumber\\
&=& \E \left[ (f_{an-a\tau_1}(\xi_{a\tau_1}^0 \cup S_{a\tau_1}^0) - f_{an-a\tau_1}(\xi_{a\tau_1}^0)) I(\tau_1 \leq n) \right] \nonumber \\
&\geq& aC_f \P(\tau_1 \leq n). \nonumber 
\end{eqnarray}
Here $I(\tau_1 \leq n)$ is the indicator function of the event $\{\tau_1 \leq n\}$. In the second to last inequality we have used the Markov property and Lemma~\ref{lem: Durrett}, along with the fact that if $\tau_1 \leq n$, then $\xi_{a\tau_1}^0 \cup S_{a\tau_1}^0$ is simply a translate of $A \cup E_a$ where $E_a$ was defined in Lemma~\ref{lem: Durrett}, and $A \subset \widetilde{\mathbb{N}}$ contains a translate of the (almost surely) infinite set $\xi_{a\tau_1}^0$.
\end{proof}
\medskip

We now iterate the construction in the last step to create further $f$-bypasses. Define inductively a sequence of stopping times $(\tau_k)_{k \geq 1} = (\tau_k(S))_{k \geq 1}$ by letting $\tau_{k+1}$ be the smallest $n \geq \tau_k +1$ such that there is the ($C_a$)-configuration around $M_{an-a}^k$. The set $\xi_n^{k+1}$ is defined as 
\[
\xi_n^{k+1} = \begin{cases}
\xi_n^k & \text{ if } n \leq a\tau_{k+1}-1 \\
\xi_{n-a\tau_{k+1}}(\xi_{a\tau_{k+1}}^k \cup S_{a\tau_{k+1}}^k) & \text{ if } n \geq a\tau_{k+1}
\end{cases}\ .
\]
Again, the set  $S_{a\tau_{k+1}}^k=M_{a\tau_{k+1}-a}^k + S^a$ does not intersect $\xi_n^k$. As in the first step, we define $M_n^{k+1}$ to be the point in $\xi_n^{k+1}$ with largest $x$-coordinate and
\[
f_n^{k+1} := f(M_n^{k+1})\ .
\]
Call $K_n= K_n(S)$ the number of $k$'s such that $\tau_k \leq n$. Notice that $K_n$ follows a binomial law with parameters $n$ and $\rho_a$. Then an iteration of the previous step gives us the following lemma.
\begin{lemma}\label{lem:sec} For all $n \in \mathbb{N}$ and any infinite starting set $S$, 
\begin{equation}\label{eq: master}
\E f_{an}^{K_n}(S) - \E f_{an}^0(S) \geq aC_f n \rho_a\ .
\end{equation}
\end{lemma}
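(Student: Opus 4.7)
The plan is to iterate the one-step bound of Lemma~\ref{l:festimate} via a telescoping identity combined with the strong Markov property at each $\tau_k$, and then to bound $\E K_n$ from below by $n\rho_a$ using a stochastic-domination argument.

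First, since $\xi_{an}^{k+1}=\xi_{an}^k$ (and hence $f_{an}^{k+1}=f_{an}^k$) whenever $\tau_{k+1}>n$, telescoping gives
\[
\E f_{an}^{K_n}(S)-\E f_{an}^0(S)\;=\;\sum_{k\geq 0}\E\bigl[(f_{an}^{k+1}-f_{an}^k)\mathbf{1}_{\{\tau_{k+1}\leq n\}}\bigr].
\]
For each $k\geq 0$ I would condition on $\mathcal{G}_{a\tau_k}$. Almost surely on $\{\tau_k<\infty\}$, the set $\xi_{a\tau_k}^k$ is an infinite subset of $D_{a\tau_k}$ containing (a translate of) $\widetilde{\mathbb{N}}$, and the waiting time $\tau_{k+1}-\tau_k$ and the new bypass $S_{a\tau_{k+1}}^k$ are measurable with respect to edges strictly beyond $D_{a\tau_k}$. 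Thus this restart is a translated copy of the one-step setup with initial starting set $\xi_{a\tau_k}^k$ and remaining horizon $n-\tau_k$; applying Lemma~\ref{l:festimate} conditionally yields
\[
\E\bigl[(f_{an}^{k+1}-f_{an}^k)\mathbf{1}_{\{\tau_{k+1}\leq n\}}\,\big|\,\mathcal{G}_{a\tau_k}\bigr]\;\geq\; aC_f\,\P\bigl(\tau_{k+1}\leq n\,\big|\,\mathcal{G}_{a\tau_k}\bigr),
\]
and taking expectations and summing gives $\E f_{an}^{K_n}(S)-\E f_{an}^0(S)\geq aC_f\,\E K_n$.

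To finish, I would show that $\E K_n\geq n\rho_a$. For each $m\in\{1,\dots,n\}$, the event that step $m$ is a success (i.e.\ $m=\tau_j$ for some $j$) depends only on the states of edges whose endpoint-pair lies in one of the slabs $[am-a,am-a+1],\ldots,[am-1,am]$, together with the location of the rightmost reachable vertex $M_{am-a}^{K_{m-1}}$, which is itself measurable with respect to strictly earlier slabs. Since the slabs used at distinct values of $m$ are disjoint, the configuration-checks at different steps are conditionally independent given the corresponding locations, and each succeeds with probability $\rho_a$. Hence $K_n$ stochastically dominates $\mathrm{Binomial}(n,\rho_a)$, so $\E K_n\geq n\rho_a$ and \eqref{eq: master} follows.

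The main technical obstacle will be the strong-Markov step: one has to verify carefully that $\xi_{a\tau_k}^k$ is almost surely infinite and contains a translate of $\widetilde{\mathbb{N}}$, and that both $\tau_{k+1}-\tau_k$ and the added bypass $S_{a\tau_{k+1}}^k$ depend only on edges beyond $D_{a\tau_k}$, so that the restart is a bona fide translated copy of the Lemma~\ref{l:festimate} setup. A related but more mechanical book-keeping issue is to make the slab-disjointness claim used for the binomial domination completely explicit, by checking endpoint-diagonals of every edge entering the definition of the (C$_a$)-configuration.
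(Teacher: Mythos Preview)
Your proposal is correct and follows essentially the same route as the paper. Two minor simplifications the paper exploits: first, since $K_n\le n$ always, one has $f_{an}^{K_n}=f_{an}^n$ deterministically, so the telescoping can be written directly as the finite sum $\sum_{k=1}^n \E(f_{an}^k-f_{an}^{k-1})$, bypassing the infinite-sum-with-indicator formulation. Second, the paper notes (just before the lemma) that $K_n$ is \emph{exactly} $\mathrm{Binomial}(n,\rho_a)$, so $\E K_n=n\rho_a$ on the nose; your slab-disjointness and conditional-independence argument in fact establishes this equality rather than merely domination, since given the location of $M_{am-a}^{K_{m-1}}$ the success probability at step $m$ is exactly $\rho_a$ by translation invariance.
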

\begin{proof}
By definition of $K_n$, we find that $K_n \leq n$. Therefore
\[
\E f_{an}^{K_n} - \E f_{an}^0 = \E f_{an}^n - \E f_{an}^0 = \sum_{k=1}^n \E(f_{an}^k - f_{an}^{k-1})\ .
\] 
Using Lemma~\ref{l:festimate} (the same argument works to give the appropriate bound for arbitrary $k \geq 1$), we get $\E f_{an}^{K_n} - \E f_{an}^0 \geq aC_f\sum_{k=1}^n \P(\tau_k \leq n) = aC_f\E K_n = aC_fn\rho_a$ as desired.
\end{proof}

With all $f$-bypasses constructed and Lemma~\ref{lem:sec} in our hands we can move on to prove Theorem~\ref{thm: diff}.
\medskip

\subsection{Proof of Theorem~\ref{thm: diff}, case $p > \vec p_c$.}
In this subsection we give the proof of Theorem~\ref{thm: diff} in the case $p > \vec p_c$.
We start by estimating the passage time using the path constructed with our bypasses.
For $r \geq 1$, let $b(r)$ be the minimal passage time from the origin to any point in the line
\[
L_r(f) := rL_1(f) = \{(x,y) \in \mathbb{R}^2~:~f(x,y) = r\}.\]

Since $L_1(f)$ is tangent to $B_\mu$ at the point $N_p$, a standard argument using compactness and convexity of the limit shape (see for instance Theorem 6 in \cite{CoxDurrett}) implies that almost surely
\begin{equation}\label{eq: onelimit}
\lim_{n \rightarrow \infty }\frac{b(n)}{n} = \lim_{n \rightarrow \infty }\frac{\tau(0,nN_p)}{n}  = 1.
\end{equation}
For the rest of this subsection, we will write $\xi_n^m$ for $\xi_n^m(\widetilde{\mathbb{N}})$, $M_n^m$ for $M_n^m(\widetilde{\mathbb{N}})$, and so on. In other words, we will make the fixed choice $S= \widetilde{\mathbb{N}}$. Now observe that by construction, whenever the event $\{0 \to D_{an}\}$ occurs, the point $M_{an}^{K_n}$ is connected to the origin by a path with at most $an+2K_n$ edges with passage time equal to $1$. This immediately implies that on the event $\{0 \to D_{an}\}$, we have $b(f_{an}^{K_n})  \leq an+2K_n$.

Since we are assuming that $p > \vec p_c$, $\lim_{n \to \infty} f_{an}^0 = \infty$ almost-surely and therefore we may deduce the following. Almost-surely for $n$ large enough (so that $f_{an}^{K_n}$ is non-zero -- it could vanish for example if $f$ is the function $f(x,y)=x$), on the event $\{0 \to D_{an}\}$,
\begin{equation}\label{eq: endeq1}
\frac{f_{an}^{K_n}}{an} \leq \left( 1+\frac{2}{a} \cdot \frac{K_n}{n} \right) \frac{f_{an}^{K_n}}{b(f_{an}^{K_n})}\ .
\end{equation}

This leads to the following simple observation.
\begin{lemma}\label{lem:lem4} As $n$ goes to infinity,
\[
\E \bigg[I(0 \to D_{an}) \left( 1+\frac{2}{a} \cdot \frac{K_n}{n} \right) \frac{f_{an}^{K_n}}{b(f_{an}^{K_n})}\bigg] \rightarrow (1+(2/a)\rho_a) ~\P(0 \to \infty)\ .
\]
\end{lemma}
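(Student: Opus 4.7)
The plan is to verify almost-sure convergence of each of the three factors inside the expectation --- the indicator $I(0 \to D_{an})$, the weight $1 + (2/a)(K_n/n)$, and the ratio $f_{an}^{K_n}/b(f_{an}^{K_n})$ --- and then pass the limit inside via bounded convergence. The first factor handles itself: the events $\{0 \to D_{an}\}$ are decreasing in $n$ (any oriented path reaching $D_{a(n+1)}$ must cross $D_{an}$) and their almost-sure intersection is $\{0 \to \infty\}$, so $I(0 \to D_{an}) \to I(0 \to \infty)$ a.s. For the second factor, I will note that $K_n$ is $\mathrm{Binomial}(n,\rho_a)$: the $(C_a)$-configurations tested at the successive steps $m = 1, \dots, n$ depend on fresh edges in disjoint diagonal strips, so they constitute i.i.d.\ $\mathrm{Bernoulli}(\rho_a)$ trials, and the strong law of large numbers yields $K_n/n \to \rho_a$ a.s.

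The third factor is the heart of the argument. I claim $f_{an}^{K_n}/b(f_{an}^{K_n}) \to 1$ almost surely on $\{0 \to \infty\}$. On this event, since $p > \vec p_c$, the origin lies in an infinite oriented cluster whose rightmost intersection with $D_{an}$ has $x$-coordinate growing linearly in $n$; because $C_f > 0$ and $f$ is linear on $D_{an}$, this forces $f_{an}^0 \to \infty$, and since the bypass construction can only increase the maximal $f$-value along each diagonal, $f_{an}^{K_n} \geq f_{an}^0 \to \infty$. Invoking \eqref{eq: onelimit}, which says $b(r)/r \to 1$ as $r \to \infty$, with $r = f_{an}^{K_n}$ then delivers the claimed convergence of the ratio to $1$.

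For the bounded-convergence step, the weight is bounded deterministically by $1 + 2/a$, and the hypothesis $\mathrm{supp}(\mu) \subseteq [1,\infty)$ gives a deterministic upper bound on the ratio: every passage time is at least the number of edges in the path, so $b(r)$ is at least the $\ell^1$-graph distance from $0$ to $L_r(f)$, namely $r/\max(|f(1,0)|, |f(0,1)|)$, for every $r > 0$. On $\{0 \to \infty\}$ we eventually have $f_{an}^{K_n} > 0$, while on the complement the indicator $I(0 \to D_{an})$ eventually vanishes, so the integrand is bounded by a deterministic constant for large $n$. Bounded convergence then produces the stated limit $(1 + (2/a)\rho_a)\,\P(0 \to \infty)$. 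The main subtlety I expect to need care with is justifying the linear growth of $f_{an}^0$ on $\{0 \to \infty\}$, which is precisely where $p > \vec p_c$ --- ensuring a positive asymptotic speed for the rightmost infinite oriented path from $\widetilde{\mathbb{N}}$ --- enters the argument.
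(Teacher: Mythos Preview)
Your proposal is correct and follows essentially the same route as the paper: almost-sure convergence of each of the three factors, a deterministic bound on the ratio coming from $\inf\mathrm{supp}(\mu)=1$, and dominated convergence. One small simplification available to you: since the starting set is $\widetilde{\mathbb{N}}$ (infinite) and $p>\vec p_c$, Durrett's edge-speed result gives $M_{an}^0(\widetilde{\mathbb{N}})/an \to N_p$ and hence $f_{an}^0 \to \infty$ \emph{almost surely}, not merely on $\{0\to\infty\}$; this lets you conclude $f_{an}^{K_n}/b(f_{an}^{K_n})\to 1$ a.s.\ outright and spares you the separate treatment of $\{0\to\infty\}^c$.
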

\begin{proof}
As $K_n$ is a binomial random variable with parameters $n$ and $\rho_a$, we have almost surely and in $L^1$:
\begin{equation}\label{eq: endeq2}
\frac{K_n}{n} \leq 1 \text{ and } \frac{K_n}{n} \to \rho_a\ .
\end{equation}
Since $\inf supp( \mu) =1$, there exists $C>0$ such that almost-surely (for all $n$ such that $f_{an}^{K_n}$ is non-zero)
\[
\frac{f_{an}^{K_n}}{b(f_{an}^{K_n})} \leq C \ .
\]
(To be completely precise, we can interpret the term on the left side of the above inequality as equal to $C$ when $f_{an}^{K_n}=0$.) Then, by composing limits and using \eqref{eq: onelimit},
\begin{equation}\label{eq: endeq3}
\lim_{n \to \infty} \frac{f_{an}^{K_n}}{b(f_{an}^{K_n})} = 1 \text{ almost surely}\ .
\end{equation}

Because $I(0 \to D_{an})$ converges almost-surely to $I(0 \to \infty)$, \eqref{eq: endeq2}, \eqref{eq: endeq3} and the dominated convergence theorem end the proof of the lemma.
\end{proof}

Next, we will use Lemma~\ref{lem:sec} to get
\begin{proposition} \label{lem:lem5}The following inequality holds:
\begin{equation}
\liminf_{n\rightarrow \infty} \E \bigg[ I(0 \to D_{an}) \frac{f_{an}^{K_n}}{an}  \bigg] \geq \P(0 \to \infty) (1+C_f\rho_a)\ .
\end{equation}
\end{proposition}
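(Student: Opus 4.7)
The plan is to combine Lemma~\ref{lem:sec} with an asymptotic shape result for the rightmost point of the oriented cluster, and then transfer the resulting lower bound to the restricted expectation via dominated convergence. First, I would show $f_{an}^0/(an) \to f(N_p) = 1$ almost surely. Since the origin is the rightmost vertex of $\widetilde{\mathbb{N}}$, the maximiser $M_{an}^0$ is contributed by $0$'s cluster on $\{0 \to \infty\}$, and on the complementary event by the cluster of the almost-surely finite first survivor $(-K,K) \in \widetilde{\mathbb{N}}$ (using $p > \vec p_c$). The asymptotic shape theorem for the right edge of a supercritical oriented cluster from a half-space then gives $M_{an}^0/(an) \to N_p$ almost surely, and since $M_{an}^0 \in D_{an}$ the quantity $f_{an}^0/(an)$ is uniformly bounded, so by dominated convergence $\E f_{an}^0/(an) \to 1$. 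Combining with Lemma~\ref{lem:sec} yields
\[
\liminf_n \E\bigl[f_{an}^{K_n}/(an)\bigr] \geq 1 + C_f\rho_a.
\]

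Next I would write
\[
\E\bigl[I(0 \to D_{an})\,f_{an}^{K_n}/(an)\bigr] = \E\bigl[f_{an}^{K_n}/(an)\bigr] - \E\bigl[I(0 \not\to D_{an})\,f_{an}^{K_n}/(an)\bigr],
\]
and seek the upper bound
\[
\limsup_n \E\bigl[I(0 \not\to D_{an})\,f_{an}^{K_n}/(an)\bigr] \leq \P(0 \not\to \infty)(1 + C_f\rho_a).
\]
Combined with the previous display this delivers the proposition. Such an upper bound follows once $f_{an}^{K_n}/(an)$ converges in probability to some constant $L \geq 1 + C_f\rho_a$, because the uniform boundedness of the integrand together with the almost-sure convergence $I(0 \not\to D_{an}) \to I(0 \not\to \infty)$ permits dominated convergence, yielding $\E[I(0 \not\to D_{an})\,f_{an}^{K_n}/(an)] \to \P(0 \not\to \infty) L$, so that $\liminf \E[I(0 \to D_{an})\,f_{an}^{K_n}/(an)] \geq \P(0 \to \infty) L \geq \P(0 \to \infty)(1 + C_f\rho_a)$.

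The main obstacle is the convergence in probability of $f_{an}^{K_n}/(an)$. Writing $(f_{an}^{K_n} - f_{an}^0)/n = (1/n)\sum_{k=1}^{K_n}(f_{an}^k - f_{an}^{k-1})$, I would apply a strong law of large numbers: each increment has conditional expectation at least $aC_f$ by Lemma~\ref{lem: Durrett}, the count $K_n \sim \mathrm{Bin}(n,\rho_a)$ satisfies $K_n/n \to \rho_a$ almost surely, and the bypass windows at distinct indices $k$ live in disjoint level ranges (being separated by the stopping times $\tau_k < \tau_{k+1}$ and their supporting $(C_a)$-configurations), providing enough independence for a martingale-type LLN. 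Combined with $f_{an}^0/(an) \to 1$ a.s., this yields the required convergence.
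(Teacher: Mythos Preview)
Your overall strategy --- establish $\liminf_n \E[f_{an}^{K_n}/(an)] \geq 1 + C_f\rho_a$ via Lemma~\ref{lem:sec} and then subtract the complementary piece $\E[I(0\not\to D_{an})\,f_{an}^{K_n}/(an)]$ --- is natural, and your first step is correct: the right-edge result for supercritical oriented percolation from the half-line gives $M_{an}^0/(an)\to N_p$ a.s., hence $\E[f_{an}^0/(an)]\to 1$, and Lemma~\ref{lem:sec} finishes that part.

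The gap is in the second step. Your argument hinges on convergence in probability of $f_{an}^{K_n}/(an)$ to a deterministic constant, and you propose to obtain this via an LLN for the increments $f_{an}^k - f_{an}^{k-1}$. But these increments are \emph{not} supported on disjoint level ranges: although the $(C_a)$-configurations at the successive stopping times $\tau_k$ are, each increment
\[
f_{an}^k - f_{an}^{k-1} = f_{an-a\tau_k}(\xi_{a\tau_k}^{k-1}\cup S_{a\tau_k}^{k-1}) - f_{an-a\tau_k}(\xi_{a\tau_k}^{k-1})
\]
depends on the oriented percolation configuration from level $a\tau_k$ all the way to level $an$. Thus increments for different $k$ share the same randomness in overlapping slabs, and no martingale-difference or independence structure is available. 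In addition, Lemma~\ref{lem: Durrett} only gives a \emph{lower bound} $aC_f$ for the conditional mean of each increment; the increments are of the form $(f_m(E_a)-f_m(A))_+$ with $A$ a random infinite subset of $\widetilde{\mathbb N}$, which need not be bounded above uniformly, so even second-moment control is not automatic. Without either independence or variance bounds your LLN sketch does not go through, and the required convergence of $f_{an}^{K_n}/(an)$ is unproved.

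The paper circumvents this entirely: it never asserts that $f_{an}^{K_n}/(an)$ converges. Instead it freezes the process at a fixed level $aN$, conditions on the configuration $\eta_{aN}$, and works only with \emph{conditional expectations}. On the event $\{0\to D_{aN},\,|\xi_{aN}(\{0\})|>M\}$, the indicator $I(\sigma\to D_{an})$ can be removed at cost $O(\varepsilon)$ (Lemmas~\ref{lem: bigguys2} and~\ref{lem: grim}), and then Lemma~\ref{lem:sec} is applied directly to the conditional expectation $\E[f_{an-aN}^{K_{n-N}(\bar\sigma)}(\bar\sigma)\mid\eta_{aN}]$, together with the $L^1$-convergence of $M_{an-aN}^0(\bar\sigma)/(an)\to N_p$. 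Fatou's lemma and $\varepsilon\to 0$ close the argument. This route needs only the expectation inequality of Lemma~\ref{lem:sec}, not any pathwise or in-probability limit for the bypass process.
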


However, in order to prove Proposition \ref{lem:lem5} we also need two lemmas about oriented percolation: 

\begin{lemma}\label{lem: bigguys2} Assume that $p > \vec p_c$ and define 
\[
a_n = \sup_{|S|=n, \; S \subseteq \widetilde \N} \Pro (S\nrightarrow \infty)\ .
\]
Then $a_n\to  0$.
\end{lemma}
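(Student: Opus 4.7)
The plan is to first show that $\widetilde{\mathbb{N}}$ almost surely produces an infinite oriented cluster, then upgrade this to a uniform bound on finite subsets via a compactness/diagonal extraction.

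First, I would prove $\mathbb{P}(\widetilde{\mathbb{N}} \to \infty) = 1$ using ergodicity. The diagonal shift $\sigma(x,y) = (x-1, y+1)$ acts ergodically on the i.i.d.\ edge configuration, and the sequence $(\mathbf{1}[s_k \to \infty])_{k \geq 1}$ is stationary under $\sigma$ with marginal $\theta(p) > 0$ since $p > \vec p_c$. The ergodic theorem gives $\frac{1}{n}\sum_{k=1}^n \mathbf{1}[s_k \to \infty] \to \theta(p)$ almost surely, so infinitely many $s_k$ connect to infinity almost surely. Monotone convergence applied to $B_K = \{s_1, \ldots, s_K\} \uparrow \widetilde{\mathbb{N}}$ then gives $\mathbb{P}(B_K \nrightarrow \infty) \downarrow 0$.

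To handle arbitrary starting sets I would argue by contradiction. Suppose $a_n \geq \delta > 0$ along some subsequence $n_k \to \infty$, with witnesses $S_{n_k}$ of size $n_k$. By translation invariance along $\widetilde{\mathbb{N}}$, normalize so $\min S_{n_k} = s_1$ for each $k$. By a diagonal extraction, pass to a further subsequence along which $\mathbf{1}[s_j \in S_{n_k}]$ stabilizes for each $j$, yielding a pointwise limit $S_\infty \subseteq \widetilde{\mathbb{N}}$ with $s_1 \in S_\infty$. If $S_\infty$ is infinite, then for each finite $F \subseteq S_\infty$ we have $F \subseteq S_{n_k}$ for all large $k$, so monotonicity of $\{T \nrightarrow \infty\}$ in $T$ gives $\mathbb{P}(S_{n_k} \nrightarrow \infty) \leq \mathbb{P}(F \nrightarrow \infty)$. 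Taking $F \uparrow S_\infty$ through finite subsets and using monotone convergence together with the ergodicity argument applied to $S_\infty$ yields $\delta \leq \mathbb{P}(S_\infty \nrightarrow \infty) = 0$, a contradiction.

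The hard case is when $S_\infty$ turns out to be finite: then the escaping points of $S_{n_k} \setminus S_\infty$ dominate, having indices tending to infinity and cardinality $n_k - |S_\infty| \to \infty$. One approach is to shift this escaping part so its minimum aligns with $s_1$, invoke translation invariance, and iterate the diagonal extraction: either some iteration yields an infinite limit (reducing to the previous case) or the escaping points become so spread that a quantitative mixing estimate for supercritical oriented percolation forces $\mathbb{P}(S_{n_k} \nrightarrow \infty) \approx (1 - \theta(p))^{n_k}$, again contradicting $\geq \delta$. The main obstacle is making this iteration terminate, or alternatively proving a rearrangement-type inequality showing that the consecutive block $B_n$ attains the supremum defining $a_n$; either way, this is the technical crux of the argument.
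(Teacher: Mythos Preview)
Your argument has a genuine gap, and it appears earlier than you think. In what you call the ``easy case'' ($S_\infty$ infinite), you invoke ``the ergodicity argument applied to $S_\infty$'' to conclude $\mathbb{P}(S_\infty \nrightarrow \infty) = 0$. But the ergodic theorem you used for $\widetilde{\mathbb{N}}$ relied on the stationarity of $(\mathbf{1}[s_k \to \infty])_k$ under the diagonal shift; for an arbitrary infinite $S_\infty = \{t_1, t_2, \ldots\}$ with non-constant gaps, the sequence $(\mathbf{1}[t_j \to \infty])_j$ is not stationary and the theorem does not apply. Knowing that the set of percolating points in $\widetilde{\mathbb{N}}$ has density $\theta(p)$ almost surely does not force it to meet every deterministic infinite subset --- in fact, showing $\mathbb{P}(\{t_1,\ldots,t_K\}\nrightarrow\infty)\to 0$ for an arbitrary increasing sequence of points is precisely the content of the lemma, so this step is circular. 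The ``hard case'' is then, as you acknowledge, incomplete: neither the iterated extraction (whose termination you cannot control) nor the unproved rearrangement inequality constitutes a proof.

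The paper's argument, following Durrett, sidesteps compactness entirely with a short recursion. Order $S=\{v_1,\ldots,v_n\}$ by decreasing $x$-coordinate and translate so $v_1=0$. The key geometric observation is that $v_{k+1},\ldots,v_n$ must then have indices at least $k$ in $\widetilde{\mathbb{N}}$, so their forward light cones are disjoint from the edges determining $\{0\nrightarrow D_k\}$; this yields genuine independence between $\{0\nrightarrow D_k\}$ and $\{\{v_{k+1},\ldots,v_n\}\to\infty\}$. From the disjoint inclusion $\{S\to\infty\}\supseteq\{0\to\infty\}\cup\bigl(\{\{v_{k+1},\ldots,v_n\}\to\infty\}\cap\{0\nrightarrow D_k\}\bigr)$ one obtains
\[
a_n \;\leq\; \mathbb{P}(0\nrightarrow\infty)-\mathbb{P}(0\nrightarrow D_k) + a_{n-k}\,\mathbb{P}(0\nrightarrow D_k),
\]
and sending $n\to\infty$ then $k\to\infty$ gives $\limsup a_n \leq (\limsup a_n)\,\mathbb{P}(0\nrightarrow\infty)$, forcing $\limsup a_n=0$. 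The supremum in the definition of $a_n$ is exactly what makes this recursion close --- that is the idea your approach is missing.
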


\begin{remark} A stronger version of this lemma can be found in \cite[Section~10]{Durrett}. We present a proof here for the reader's convenience.
\end{remark}

\begin{proof}
Let $S =  \{v_1, \ldots v_n \}\subseteq \widetilde{\N},$ where we order $S$ by decreasing $x$-coordinate. By translation invariance we may assume that $v_1 =0$. If $k<n/2$, then
\begin{equation*}
\begin{split}
\Pro ( S \rightarrow \infty) &\geq \Pro(0\rightarrow \infty  ) + \Pro( \{ v_{k+1}, \ldots, v_n \} \rightarrow \infty, 0 \nrightarrow D_k )\\ &=  \Pro(0\rightarrow \infty  ) + \Pro( \{ v_{k+1}, \ldots, v_n \} \rightarrow \infty)~  \Pro( 0 \nrightarrow D_k )\ ,
\end{split}
\end{equation*}  
where in the second line we used independence of the two events in question.
Therefore,
\begin{equation*}
\begin{split}
\Pro ( S \nrightarrow \infty) &\leq \Pro(0\nrightarrow \infty  ) - \Pro( \{ v_{k+1}, \ldots, v_n \} \rightarrow \infty) ~\Pro( 0 \nrightarrow D_k )\\ 
&=  \Pro(0\nrightarrow \infty  ) - \Pro( 0 \nrightarrow D_k ) + \Pro( \{ v_{k+1}, \ldots, v_n \} \nrightarrow \infty)~ \Pro( 0 \nrightarrow D_k )\ .
\end{split}
\end{equation*}
Taking the supremum of both sides,
\begin{equation}\label{eq: back}
a_n \leq \Pro(0\nrightarrow \infty) - \Pro(0 \nrightarrow D_k) +  a_{n-k} ~ \Pro(0 \nrightarrow D_k)
\end{equation}
Therefore if we write $A:= \limsup_{n \to \infty} a_n$, we can take $n$ to infinity in \eqref{eq: back} to find
\[
A \leq \P(0 \nrightarrow \infty) - \P(0 \nrightarrow D_k) + A ~\P(0 \nrightarrow D_k)\ .
\]
Letting $k$ tend to infinity, we see that $A \leq A ~\P(0 \nrightarrow \infty)$. Since $p > \vec p_c$, this implies $A=0$ and completes the proof of the lemma.

\end{proof}

\begin{lemma} \label{lem: grim}
Given $\varepsilon >0$, $p>\vec p_c$ and $M >0$, the following inequality holds for all $N$ large enough:
\begin{equation*}
\Pro\bigg( 0 \rightarrow D_N, |\xi_N(\{0\})| < M \bigg) < \varepsilon\ .
\end{equation*} 
\end{lemma}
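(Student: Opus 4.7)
The plan is to split $\{0 \to D_N,\ |\xi_N(\{0\})| < M\}$ according to whether the open cluster of the origin is infinite, and then to close a self-bounding inequality on the survival part using the Markov property of oriented percolation together with FKG. Writing $\pi_M(N) = \P(0 \to D_N,\ |\xi_N(\{0\})| < M)$, the first move is the decomposition
\[
\pi_M(N) \leq \bigl[\P(0 \to D_N) - \P(0 \to \infty)\bigr] + \P\bigl(0 \to \infty,\ 1 \leq |\xi_N(\{0\})| < M\bigr).
\]
The first bracket tends to $0$ as $N \to \infty$ because $\{0 \to D_N\}$ is a decreasing family of events whose intersection is $\{0 \to \infty\}$, so its probability converges downward by continuity of measure.

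For the survival term, I would condition on $\xi_N(\{0\})$ and invoke the Markov property of oriented percolation across the diagonal $D_N$: edges above $D_N$ are independent of edges below, with initial condition $\xi_N(\{0\})$, so
\[
\P(0 \to \infty \mid \xi_N(\{0\}) = S) = \P(S \to \infty).
\]
To bound $\P(S \to \infty)$ uniformly in $S$ with $|S| \leq M-1$, I would apply FKG to the decreasing events $\{x \nrightarrow \infty\}$, $x \in S$:
\[
\P(S \nrightarrow \infty) = \P\Bigl(\bigcap_{x \in S}\{x \nrightarrow \infty\}\Bigr) \geq \prod_{x \in S}\P(x \nrightarrow \infty) = (1-\theta)^{|S|},
\]
where $\theta := \P(0 \to \infty) \in (0,1)$ for $p \in (\vec p_c, 1)$. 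This yields $\P(S \to \infty) \leq q_M := 1 - (1-\theta)^{M-1} < 1$ uniformly, and summing over admissible $S$ gives
\[
\P\bigl(0 \to \infty,\ 1 \leq |\xi_N(\{0\})| < M\bigr) \leq q_M\, \pi_M(N).
\]

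Combining the two pieces produces $(1 - q_M)\pi_M(N) \leq \P(0 \to D_N) - \P(0 \to \infty)$, and since $1 - q_M > 0$ is fixed while the right-hand side tends to $0$, one concludes $\pi_M(N) \to 0$, proving the lemma. The step I expect to be the main obstacle is producing a strict bound $\P(S \to \infty) < 1$ uniformly over $S$ of bounded size; this is exactly where FKG is indispensable, and it requires only that $\theta < 1$, which is automatic for $p < 1$ (the degenerate case $p = 1$ is trivial since $|\xi_N(\{0\})| = N+1$). Note that supercriticality $p > \vec p_c$ is used only implicitly, to make the first bracket in the decomposition vanish at a useful rate.
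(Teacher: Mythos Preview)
Your proof is correct, but it takes a genuinely different route from the paper's. The paper uses the Grimmett--Marstrand ``closing edges'' trick: for each $l < M$, it pays a factor $(1-p)^{-2l}$ to force all edges exiting $\xi_N(\{0\})$ to be closed, thereby converting $\{|\xi_N(\{0\})|=l\}$ into a subset of $\{0 \to D_N,\ 0 \nrightarrow D_{N+1}\} \subseteq \{0 \to D_N,\ 0 \nrightarrow \infty\}$. Summing over $l$ gives $\pi_M(N) \leq (1-p)^{-2M}\,\P(0 \to D_N,\ 0 \nrightarrow \infty) \to 0$ in one stroke. Your argument instead splits off the non-survival part (which is exactly the paper's right-hand side, without the prefactor) and handles the survival part by FKG plus the Markov property, closing a self-bounding inequality. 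Both proofs ultimately reduce to $\P(0 \to D_N,\ 0 \nrightarrow \infty) \to 0$; the paper reaches it via a local surgery that costs a constant depending on $M$ and $p$, while you reach it via a positive-correlation bound that costs a constant depending on $M$ and $\theta$. Your approach is arguably more conceptual (it isolates the probabilistic content as ``small sets are unlikely to survive''), whereas the paper's is a one-line computation with an explicit quantitative bound. One small remark: your closing comment that supercriticality is used to make the first bracket vanish is slightly off --- that bracket vanishes for all $p$ by monotone continuity; in fact your argument goes through for every $p\in(0,1)$, with the subcritical and critical cases becoming trivial because the survival term is identically zero.
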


\begin{proof}
We proceed as in \cite[Lemma~3]{GrimmettMarstrand}. 
\begin{equation*}
\begin{split}
\Pro\bigg( 0 \rightarrow D_N, |\xi_N(\{0\})| < M \bigg) &= \sum_{l=1}^{M-1} \Pro(|\xi_N(\{0\})|=l) = 
\sum_{l=1}^{M-1} \frac{1}{(1-p)^{2l}} \Pro(|\xi_N(\{0\})|=l)(1-p)^{2l}\\ 
&\leq    \frac{1}{(1-p)^{2M}}
\sum_{l=1}^{M-1} \Pro(|\xi_N(\{0\})|=l, \; \text{all edges exiting} \; \xi_N(\{0\}) \; \text{are closed})\\
&=   \frac{1}{(1-p)^{2M}} \sum_{l=1}^{M-1} \Pro(|\xi_N(\{0\})|=l, |\xi_{N+1}(0)|=0) \\
&=   \frac{1}{(1-p)^{2M}} \Pro(|\xi_N(\{0\})|\in [1,M), |\xi_{N+1}(0)|=0)\\
&\leq \frac{1}{(1-p)^{2M}} \Pro(0\rightarrow D_N, 0\nrightarrow \infty) \rightarrow 0, \; \text{as} \; N\rightarrow \infty. 
\end{split}
\end{equation*}

\end{proof}

We now prove Proposition \ref{lem:lem5}.
\begin{proof}[Proof of Proposition  \ref{lem:lem5}]
Given $\varepsilon >0$, we may use Lemma~\ref{lem: bigguys2} to choose $M$ such that whenever $S \subseteq \widetilde \N$ has at least $M$ elements then $\Pro(S \rightarrow \infty) > 1-\varepsilon.$ Next, we use Lemma~\ref{lem: grim} to choose $N$ corresponding to this $M$. Before we proceed, we need to give a bit more notation.

Decompose a configuration $\eta = (\eta_e)$ as $(\eta_{aN},\eta^{aN})$, where $\eta_{aN}$ is the configuration of $\eta$-values for all edges with both endpoints in the set $\{(x,y) \in \mathbb{Z}^2~:~x+y \leq aN\}$ and $\eta^{aN}$ is the configuration of $\eta$-values for all edges with both endpoints in the set $\{(x,y) \in \mathbb{Z}^2:x+y \geq aN\}$. For a configuration $\eta_{aN}$ we make the following two definitions. Let $\sigma = \sigma(\eta_{aN})$ be the set $\xi_{aN}(\{0\})$ in the configuration $\eta$. Also let $\overline \sigma$ be the set $\xi_{aN}^{K_N}(\widetilde{\mathbb{N}})$ in the configuration $\eta$. Notice that $\sigma$ and $\overline \sigma$ depend only on $\eta_{aN}$; they do not depend on $\eta^{aN}$.

For $n > N$, we compute
\begin{eqnarray*}
&&\E \left[ \frac{f_{an}^{K_n}}{an} I(0 \to D_{an}) \right] \geq \E \left[ \E \left[ \frac{f_{an}^{K_n}}{an} I(0 \to D_{an}, |\sigma| > M)~\bigg|~\eta_{aN}\right] \right] \\
&=& \E \left[ \E \left[ \frac{f_{an}^{K_n}}{an} I(0 \to D_{aN}, |\sigma| > M)~I(\sigma \to D_{an})~\bigg|~ \eta_{aN}\right] \right] \\
&=& \E \left[ I(0 \to D_{aN}, |\sigma|>M) ~ \E \left[ \frac{f_{an}^{K_n}}{an} I(\sigma \to D_{an})~\bigg|~\eta_{aN} \right] \right] \\
&=& \E \left[ I(0 \to D_{aN}, |\sigma|>M) ~ \E \left[ \frac{f_{an-aN}^{K_{n-N}(\overline \sigma)}(\overline \sigma)}{an} I(\sigma \to D_{an})~\bigg|~\eta_{aN} \right] \right]\ .
\end{eqnarray*}
We now inspect the inner conditional expectation. For each fixed $\overline \sigma$, $f_{an-aN}^{K_{n-N}(\overline \sigma)}(\overline \sigma)$ is bounded above by $anC_f$ almost surely for all $n$. By our choice of $M$, the following holds for almost every $\eta_{aN}$ on the event $\{|\sigma|>M\}$:
\begin{equation}\label{eq: stapler}
\E\left[f_{an-aN}^{K_{n-N}(\overline \sigma)}(\overline \sigma) I(\sigma \to D_{an}) ~\big|~ \eta_{aN} \right] \geq \E \left[f_{an-aN}^{K_{n-N}(\overline \sigma)}(\overline \sigma) ~\big|~ \eta_{aN}\right] - aC_fn \varepsilon\ .
\end{equation}
By independence of $\eta_{aN}$ and $\eta^{aN}$, the function $f_{an-aN}^{K_{n-N}(\overline \sigma)}(\overline \sigma)$ depends on $\eta_{aN}$ only through $\overline \sigma$. Because $\overline \sigma$ is almost-surely infinite, we may apply Lemma~\ref{lem:sec} to get a lower bound for \eqref{eq: stapler} on the event $\{|\sigma|>M\}$ of
\[
\E \left[ f_{an-aN}^0(\overline \sigma) ~\big|~ \eta_{aN} \right] + aC_f~\E \left[ K_{n-N}(\overline \sigma)~\big|~\eta_{aN} \right] - aC_f n \varepsilon\ .
\]
Since for any fixed $\overline \sigma$ the sequence of points $(M_{an-aN}^0(\overline\sigma)/an)$ converges as $n$ goes to infinity to $N_p$ almost-surely and in $L^1$ (this follows from the definition of $\alpha_p$ from \eqref{eq:NP} -- see \cite{Durrett}), we have $\E \left[\frac{f_{an-aN}^0}{an}(\overline \sigma)~|~ \eta_{aN}\right] \to f(N_p) = 1$ almost surely. In addition, $\E \left[ \frac{K_{n-N}(\overline \sigma)}{n}~|~\eta_{aN} \right]$ converges to $\rho_a$ almost surely. These observations, combined with the above inequalities and Fatou's lemma, imply that
\begin{eqnarray*}
&&\liminf_{n \to \infty} \E \left[ \frac{f_{an}^{K_n}}{an}I(0 \to D_{an}) \right] \\
&\geq& \E \left[ I(0 \to D_{aN},|\sigma|>M) \liminf_{n \to \infty} \E \left[ \frac{f_{an-aN}^{K_{n-N}(\overline \sigma)}(\overline \sigma)}{an} I(\sigma \to D_{an})~\bigg|~\eta_{aN} \right] \right] \\
&\geq& \E \left[ I(0 \to D_{aN},|\sigma|>M) (1+C_f\rho_a - \varepsilon C_f) \right] \\
&=& (1+C_f\rho_a-\varepsilon C_f) \P(|\sigma| >M,~0 \to D_{aN}) \\
&\geq& (1+C_f\rho_a - \varepsilon C_f) \left( \P(0 \to D_{aN}) - \varepsilon) \right) \\
&\geq& (1+C_f\rho_a - \varepsilon C_f) \left( \P(0 \to \infty) - \varepsilon) \right)\ .
\end{eqnarray*}
Taking $\varepsilon$ to zero, the proposition is proved.
\end{proof}

\begin{proof}[Proof of Theorem~\ref{thm: diff}, case $p > \vec p_c$.] Choose 
\begin{equation}\label{defa}
a \geq \max \{ 3/C_f, 2\}.
\end{equation}
Combining Lemma~\ref{lem:lem4} with Proposition~\ref{lem:lem5} and \eqref{eq: endeq1} we get
\[
\P(0 \to \infty) (1+C_f\rho_a) \leq \P(0 \to \infty) (1+(2/a)\rho_a)\ .
\]
Since we are in the case $p > \vec p_c$, $\P(0 \to \infty)>0$, and consequently $3\rho_a \leq 2\rho_a$, which is a contradiction since $\rho_a>0$. Therefore, we are not allowed to choose $f$ as in \eqref{eq: teq} and $\overline L$ is the unique tangent line at the point $N_p$.
\end{proof}

\subsection{Proof of  Theorem~\ref{thm: diff}, case $p=\vec p_c$.}
\begin{proof}[Proof in the case $p=\vec p_c$.] The proof of the case $p = \vec p_c$ is very similar to the previous case, although one should be careful because the oriented cluster of $1$ edges does not percolate. To circumvent this issue we define new variables as in \cite{Marchand}.
 
Let $\e >0$ and $K>1$ be such that
\[
\mu(\tau\leq K) > \vec p_c + \e\ .
\]

Given a realization of passage times $(\tau_e)$ we define $(\eta_e)$ as follows: If $\tau_e = 1$, $\eta_e=1$; if $\tau_e > K$, $\eta_e = 0$; if $\tau_e \in (0,K]$ set $\eta_e$ equal to one with probability $q := \frac{\varepsilon}{\Pro(\tau_e \in (1,K])}$ and $0$ with probability $1-q$.  We make this assignment independently among edges and independently of the random variables $(\tau_e)$.  It follows from this definition that the random variables $(\eta_e)$ are i.i.d. and Bernoulli with parameter $\tilde p~ := \mathbb{P}(\eta_e=1) = \vec p_c + \varepsilon$. Again we embed an oriented percolation model with this parameter $\tilde p$ in our first-passage percolation model. 

As before, we assume that $\partial B_\mu$ is not differentiable at $N_{\vec p_c} = (1/2,1/2)$, so by symmetry of $B_\mu$ about the line $y=x$, we may fix a linear functional $f$ such that (a) $f(N_{\vec p_c}) =1$, (b) $\max_{x \in B_\mu} f(x) = 1$ and (c) $f(1,-1)>0$. 
%
%
%
Fixing $a \geq \max\{3/C_f,2\}$ we can proceed just as in the previous section. On the event $\{0 \to D_{an}\}$ (which means, as before, that $0$ is connected to the line $D_{an}$ using oriented edges with all $\eta$-values equal to 1), let $\gamma_{an}$ be a path that connects  $M_{an}^{K_n}$ to the origin and has at most $an+2K_n$ edges with passage times in $(1,K]$. Note that $\gamma_{an}$ can be defined unambiguously by, for instance, ordering all finite paths in $\mathbb{Z}^2$ deterministically; this will be important because we would like to apply the law of large numbers on this path. Also note that the distributions of $K_n$ and $M_{an}^{K_n}$ depend on $\tilde p$ (that is, they depend on $\e$), which we will take to $\vec p_c$ later. In particular, for fixed $a$, the constant $\rho_a$ depends on $\tilde p$, so we shall write $\rho_a(\tilde p)$, but it has the property that $\lim_{\tilde p \to \vec p_c} \rho_a(\tilde p) > 0$.

Again, for $r\geq 1$, we set $b(r)$ to be the first time any point in the line $L_r(f)$ is reached by first-passage percolation starting at the origin. It follows as before that
\[
\frac{b(n)}{n} \to 1 \text{ almost-surely as } n \to \infty\ .
\]
Equation \eqref{eq: endeq1} becomes almost-surely, for $n$ large enough,
\[
I(0 \to D_{an}) \frac{f_{an}^{K_n}}{an} \leq I(0 \to D_{an}) \left( \frac{1}{an} \sum_{e \in \gamma_{an}} \tau_e \right) \frac{f_{an}^{K_n}}{b(f_{an}^{K_n})}\ .
\]

We can estimate the sum of the weights in $\gamma_{an}$ by noting that 
\[
\frac{1}{an} \sum_{e \in \gamma_{an}} \tau_e  \leq \left(1+\frac{2}{a}\cdot \frac{K_n}{n}\right) \frac{1}{|\gamma_{an}|} \sum_{e \in \gamma_{an}} \tau_e\ .
\]
Therefore we find
\begin{equation}\label{eq: her28}
I(0 \to D_{an}) \frac{f_{an}^{K_n}}{an} \leq I(0 \to D_{an}) \left(1 + \frac{2}{a}\cdot \frac{K_n}{n} \right) \left( \frac{1}{|\gamma_{an}|} \sum_{e \in \gamma_{an}} \tau_e \right) \frac{f_{an}^{K_n}}{b(f_{an}^{K_n})}\ .
\end{equation}

By the law of large numbers for sums of i.i.d. random variables, Lemma~\ref{lem:lem4} becomes 
\begin{lemma}\label{lem:lem6} As $n$ goes to infinity,
\[
\E \bigg[I(0 \to D_{an}) \left(1 + \frac{2}{a}\cdot \frac{K_n}{n} \right) \left( \frac{1}{|\gamma_{an}|} \sum_{e \in \gamma_{an}} \tau_e \right) \frac{f_{an}^{K_n}}{b(f_{an}^{K_n})}\bigg] \rightarrow \P_{\tilde p}(0 \to \infty)~(1+\frac{2}{a}\rho_a(\tilde p))~ \E\left[ \tau_e~|~\eta_e=1\right]\ .
\]
\end{lemma}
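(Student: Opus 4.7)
The plan is to mirror the proof of Lemma \ref{lem:lem4}, with the new ingredient being a conditional law of large numbers for the empirical average $\frac{1}{|\gamma_{an}|}\sum_{e \in \gamma_{an}} \tau_e$. I will show that each of the four factors in the integrand converges in probability to a constant (possibly multiplied by $I(0 \to \infty)$) and is uniformly bounded, so that bounded convergence gives the conclusion.

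First I would handle the new factor. Because the tie-breaking rule for $\gamma_{an}$ uses only the deterministic ordering of finite paths in $\mathbb{Z}^2$, the edge set of $\gamma_{an}$ is measurable with respect to the $\sigma$-algebra generated by $(\eta_e)$. Conditionally on $(\eta_e)$, the weights $\{\tau_e\}_{e \in \gamma_{an}}$ are therefore independent with the common law of $\tau_e$ given $\eta_e = 1$, which is supported in $[1,K]$ (since $\tau_e > K$ forces $\eta_e = 0$). Because $M_{an}^{K_n}$ lies on $D_{an}$, we have $|\gamma_{an}| \ge an$ on $\{0 \to D_{an}\}$, and Chebyshev's inequality applied conditionally on $(\eta_e)$ yields
\[
\P\left( \left| \frac{1}{|\gamma_{an}|}\sum_{e \in \gamma_{an}} \tau_e - \E[\tau_e \mid \eta_e = 1] \right| > \varepsilon,\; 0 \to D_{an}\right) \le \frac{K^2}{an\,\varepsilon^2} \longrightarrow 0
\]
for every $\varepsilon > 0$, i.e.\ convergence in probability on $\{0 \to D_{an}\}$ to $\E[\tau_e \mid \eta_e = 1]$.

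Next I would recover the other three almost-sure limits exactly as in the proof of Lemma \ref{lem:lem4}, now applied with parameter $\tilde p$: $I(0 \to D_{an}) \to I(0 \to \infty)$ by monotone convergence of events; $K_n/n \to \rho_a(\tilde p)$ by the strong law for the Bernoulli sequence of $(C_a)$-configurations; and $f_{an}^{K_n}/b(f_{an}^{K_n}) \to 1$ from $b(r)/r \to 1$ together with the linear lower bound $f_{an}^{K_n} \ge f_{an}^{0} \sim an\, f(N_{\tilde p})$, where $f(N_{\tilde p}) = 1 + (\alpha_{\tilde p}/\sqrt{2})C_f > 0$. All four factors are uniformly bounded: by $1$, $1 + 2/a$, $K$, and a deterministic constant (using $\inf~supp(\mu) \ge 1$ to bound the ratio $f/b$) respectively. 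The product therefore converges in probability to $I(0 \to \infty)\bigl(1 + \tfrac{2}{a}\rho_a(\tilde p)\bigr)\E[\tau_e \mid \eta_e = 1]$, and bounded convergence gives the stated limit.

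The only delicate point is the first step: because $\gamma_{an}$ is random and there is no nested structure as $n$ grows, Kolmogorov's strong law cannot be applied directly. The fix is to condition on $(\eta_e)$, which reduces the matter to Chebyshev for an i.i.d.\ sum of bounded variables; convergence in probability (rather than almost sure) is enough since every factor is uniformly bounded and we use bounded convergence.
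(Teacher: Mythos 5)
Your proof is correct and matches the route the paper intends (the paper devotes only the phrase ``by the law of large numbers for sums of i.i.d.\ random variables'' to this step, precisely because the key observation is the one you make: conditionally on the i.i.d.\ $\eta$-field the path $\gamma_{an}$ is deterministic and the weights on it are i.i.d.\ with law $\mathcal{L}(\tau_e\mid\eta_e=1)$, bounded in $[1,K]$). Your conditional Chebyshev argument cleanly addresses the fact that the paths $\gamma_{an}$ are not nested as $n$ grows, and since all four factors are uniformly bounded, convergence in probability together with bounded convergence suffices, exactly as you say.
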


On the other hand, the right side in Proposition~\ref{lem:lem5} can be replaced by 
\[
\P_{\tilde p}(0 \to \infty) (1+ C_f\rho_a(\tilde p))\ 
\]
and therefore combining these two, we find 
\begin{equation}\label{e:fff}
\E\left[ \tau_e~|~ \eta_e=1\right] \geq \frac{1+C_f\rho_a(\tilde p)}{1+(2/a)\rho_a(\tilde p)}\ .
\end{equation}
As $\tilde p \to \vec p_c$, the left side approaches $1$. Recall, however, that $a$ was chosen only to depend on $f$ (not on $\tilde p$), so this inequality holds independently of $\tilde p$. Furthermore, $\rho_a(p)$ is a polynomial in $p$. Therefore as $\tilde p \to \vec p_c$,  the right side converges to
\[
\frac{1+C_f\rho_a(\vec p_c)}{1+(2/a)\rho_a(\vec p_c)}\ ,
\]
which is strictly bigger than $1$ since $C_f \geq 3/a$. This gives a contradiction and completes the proof.
\end{proof}

\begin{remark}\label{rem: othercases}
To see that the proof of Theorem \ref{thm: diff} is also valid in the cases (a), (b), (c) of Remark \ref{othercases} one needs to find suitable configurations to play the same role of the previous $f$-bypasses. We will sketch the proof in the case (a), leaving cases (b) and (c) to the reader. We start by (see Figure \ref{fig:Caconfigdir}):

 \begin{definition}
We say that there is the ($\vec C_a$)-configuration around the vertex $(x,y)$ in $\mathbb{Z}^2$ if the following occurs.
\begin{itemize}
\item The edges $\langle(x,y),(x,y+1)\rangle$ and $\langle(x,y),(x+1,y)\rangle$ have $\eta$-value equal to 0, that is, they are closed for  the embedded oriented percolation model.
\item For each vertex $w$ in the set $\{(u,v)~:~u+v=x+y \text{ and } y+1 \leq v \leq  y+a-2\}$, each edge with $w$ as either a left endpoint or a bottom endpoint has $\eta$-value equal to 0. (For $a=2$, this condition is not used.)
\item Each edge between pairs of nearest-neighbor vertices in the set 
\[
\{(u,v) ~:~u \geq x+1, ~v \geq y, \text{ and } \|(u,v)-(x+1,y)\|_1 \leq a-1\}
\]
has $\eta$-value equal to $1$.
\end{itemize}
\end{definition}
\begin{figure}
\centering
\begin{tikzpicture}[line cap=round,line join=round,>=triangle 45,x=0.95cm,y=0.95cm]
\clip(-5.34,-2.1) rectangle (7.13,8.02);
\draw [dotted,domain=-5.34:7.13] plot(\x,{(--2-1*\x)/1});
\draw [dotted,domain=-5.34:7.13] plot(\x,{(--6-2*\x)/2});
\draw [dotted,domain=-5.34:7.13] plot(\x,{(--8-2*\x)/2});
\draw [dotted,domain=-5.34:7.13] plot(\x,{(--5-1*\x)/1});
\draw [dotted,domain=-5.34:7.13] plot(\x,{(--6-1*\x)/1});
\draw [dotted,domain=-5.34:7.13] plot(\x,{(--7-1*\x)/1});
\draw [dash pattern=on 2pt off 2pt] (0,2)-- (1,2);
\draw [dash pattern=on 2pt off 2pt] (1,2)-- (1,1);
\draw [dash pattern=on 2pt off 2pt] (0,2)-- (0,3);
\draw [dash pattern=on 2pt off 2pt] (0,3)-- (-1,3);
\draw [dash pattern=on 2pt off 2pt] (-1,3)-- (-1,4);
\draw [dash pattern=on 2pt off 2pt] (-1,4)-- (-2,4);
\draw [dash pattern=on 2pt off 2pt] (-2,5)-- (-2,4);
\draw [dash pattern=on 2pt off 2pt] (-3,5)-- (-2,5);
\draw (2,2)-- (5,2);
\draw (3,1)-- (3,2);
\draw (2,1)-- (2,2);
\draw (4,1)-- (4,2);
\draw (2,3)-- (2,2);
\draw (2,5)-- (2,3);
\draw (2,3)-- (4,3);
\draw (3,4)-- (3,2);
\draw (4,3)-- (4,2);
\draw (5,1)-- (5,2);
\draw (6,1)-- (2,1);
\draw  [dash pattern=on 2pt off 2pt] (1,1)-- (2,1);
\draw [dash pattern=on 2pt off 2pt] (-3,6)-- (-3,5);
\draw (3,4)-- (2,4);
\draw [color=qqqqcc] (2,0)-- (6,0);
\draw [color=qqqqcc] (-3.84,6)-- (-3.84,2);
\draw [->] (-0.96,-0.25) -- (0.64,0.66);
\draw (-2.01,-0.36) node[anchor=north west] {$\mathit{M_{a\tau_1-a}^0}$};
\draw (4.79,4.33) node[anchor=north west] {$\mathit{S_{a\tau_1}^0}$};
\draw (3.27,-0.1) node[anchor=north west] {$\mathit{a-1}$};
\draw (-5.1,4.15) node[anchor=north west] {$\mathit{a-1}$};
\draw [->,color=qqqqcc] (6.39,1.66) -- (2.94,5.09);
\draw [->,color=qqqqcc] (2.94,5.09) -- (6.39,1.66);
\fill [color=qqqqff] (0,2) circle (1.5pt);
\fill [color=qqqqff] (2,1) circle (1.5pt);
\fill [color=qqqqff] (3,1) circle (1.5pt);
\fill [color=qqqqff] (4,1) circle (1.5pt);
\fill [color=qqqqff] (5,1) circle (1.5pt);
\fill [color=qqqqff] (6,1) circle (1.5pt);
\draw [color=red] (1,1)-- ++(-2.5pt,-2.5pt) -- ++(7.0pt,7.0pt) ++(-7.0pt,0) -- ++(7.0pt,-7.0pt);
\fill [color=qqqqff] (1,2) circle (1.5pt);
\fill [color=qqqqff] (2,2) circle (1.5pt);
\fill [color=qqqqff] (3,2) circle (1.5pt);
\fill [color=qqqqff] (4,2) circle (1.5pt);
\fill [color=qqqqff] (5,2) circle (1.5pt);
\fill [color=qqqqff] (0,3) circle (1.5pt);
\fill [color=qqqqcc] (-1,3) circle (1.5pt);
\fill [color=qqqqcc] (-1,4) circle (1.5pt);
\fill [color=qqqqcc] (-2,4) circle (1.5pt);
\fill [color=qqqqcc] (-2,5) circle (1.5pt);
\fill [color=qqqqcc] (-3,5) circle (1.5pt);
\fill [color=qqqqcc] (2,3) circle (1.5pt);
\fill [color=qqqqcc] (2,5) circle (1.5pt);
\fill [color=qqqqcc] (4,3) circle (1.5pt);
\fill [color=qqqqcc] (3,4) circle (1.5pt);
\fill [color=qqqqcc] (-3,6) circle (1.5pt);
\fill [color=qqqqcc] (2,4) circle (1.5pt);
\fill [color=qqqqcc,shift={(2,0)},rotate=90] (0,0) ++(0 pt,2.25pt) -- ++(1.95pt,-3.375pt)--++(-3.9pt,0 pt) -- ++(1.95pt,3.375pt);
\fill [color=qqqqff,shift={(6,0)},rotate=270] (0,0) ++(0 pt,2.25pt) -- ++(1.95pt,-3.375pt)--++(-3.9pt,0 pt) -- ++(1.95pt,3.375pt);
\fill [color=qqqqff,shift={(-3.84,6)}] (0,0) ++(0 pt,2.25pt) -- ++(1.95pt,-3.375pt)--++(-3.9pt,0 pt) -- ++(1.95pt,3.375pt);
\fill [color=qqqqff,shift={(-3.84,2)},rotate=180] (0,0) ++(0 pt,2.25pt) -- ++(1.95pt,-3.375pt)--++(-3.9pt,0 pt) -- ++(1.95pt,3.375pt);
\fill [color=qqqqcc] (3.02,2.98) circle (1.5pt);
\end{tikzpicture}
\caption{A depiction of the ($\vec C_a$)-configuration around the vertex $M_{a\tau_1-a}^0$ for directed first passage percolation. The dashed bonds have $\eta$-value zero and the full bonds have $\eta$-value one.}
\label{fig:Caconfigdir}
\end{figure}
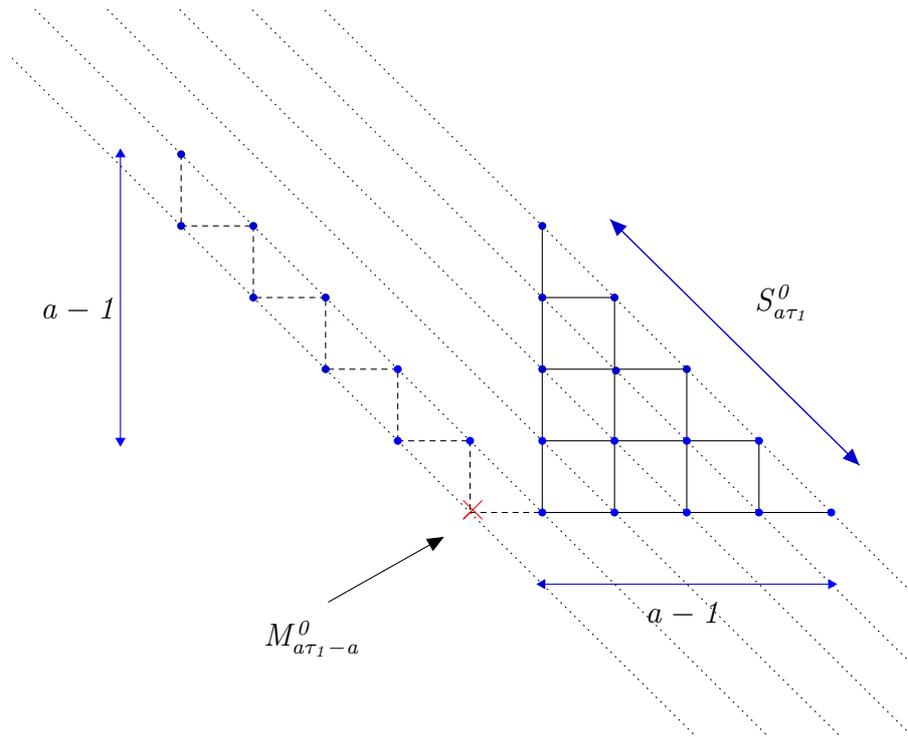

Roughly speaking, a bypass at $(x,y)$ in directed FPP as defined above is a path that passes through a possibly high-weight edge at $\langle (x,y), (x+1,y) \rangle$ so that it can profit from a large configuration of edges with  passage time equal to 1 afterwards.

To complete the proof, choose $a$ such that 
$$ a \geq \max \{2, \frac{3}{C_f}, 2\E (\tau_e | \tau_e >1) \}.$$
The inequality in \eqref{eq: endeq1} is replaced by $$
\frac{f_{an}^{K_n}}{an} \leq \left( a- \frac{K_n}{n}+\frac{1}{a} \cdot \sum_{i=1}^{K_n} \tau_{e_i} \right) \frac{f_{an}^{K_n}}{b(f_{an}^{K_n})}\ , $$
where $e_i$ are the closed edges through which a bypass path pases (they correspond to the edges $\langle(x,y), (x+1,y)\rangle$ in each $(\vec C_a)$ - configuration). By noting that $(\tau_{e_i})_i$ is a collection of independent random variables with mean $\E[\tau_e~|~\tau_e>1]$, one can complete the proof as in the undirected case with minor changes. 
\end{remark}

\section{Proof of Theorem~\ref{thm:unbounded}}\label{sec:Newman}

In this section we use the technique of Newman and Piza to prove Theorem~\ref{thm:unbounded}. Before moving to the proof, we will briefly describe the martingale method used and indicate why their initial work was not able to cover the case treated in this paper. 

Enumerate the edges $(e_i)$ of $\mathbb{Z}^2$ and define a filtration $(\mathcal{F}_i)$, where $\mathcal{F}_i$ is the sigma-algebra generated by the weights $\tau_{e_1}, \ldots, \tau_{e_i}$ and $\mathcal{F}_0$ is the trivial sigma-algebra. Writing $T=\tau(0,nw_\theta)$, we may use $L^2$-orthogonality of martingales to find
\[
\Var T = \sum_{i=1}^\infty \left[\E[T~|~\mathcal{F}_i] - \E[T~|~\mathcal{F}_{i-1}]\right]^2\ .
\]
The $i$-th term in the sum represents a part of the contribution to the variance given by fluctuations of the passage time of the $i$-th edge $e_i$. The idea of \cite{NewmanPiza} is that if $e_i$ is in a geodesic from $0$ to $nw_\theta$ then if we lower its passage time (while keeping all other weights fixed), the variable $T$ will decrease linearly, and therefore $\tau_{e_i}$ will have influence on the fluctuations of $T$. They consequently argue that one has a lower bound for $\Var~T$ of
\[
C~\sum_{i=1}^\infty \P(F_i)^2\ ,
\]
where $F_i$ is the event that $e_i$ is in a geodesic from $0$ to $nw_\theta$. Using clever summation techniques, they bound this below by $C~\log n$.

One main difficulty in extending the Newman-Piza results to the class $\mathcal{M}_p$ is that it is possible that geodesics only use edges with passage times equal to $1$. In this case, we can never lower the weight of an edge in a geodesic. Therefore if we are to use the same technique, we must show that geodesics use many edges with weight above $1$. This is the content of Lemma~\ref{lem:nononeedges}, where such a statement is shown for directions outside the percolation cone. It is not enough only to know this though; if one repeats the computations of \cite{NewmanPiza} using only Lemma~\ref{lem:nononeedges}, one finds only a lower bound of a constant (with no logarithm term). It is essential also to know information about the location of these non-one edges on the lattice. In particular, if they are heavily concentrated enough near the origin, we can extract a logarithmic bound. To do this, we need to know something about the geometry of geodesics (for instance, that they avoid certain regions of the plane). In the proof of Theorem~\ref{thm:unbounded}, we show that the non-one edges of geodesics can be made to lie in certain disjoint annuli of constant aspect ratio. Information of this type turns out to follow from Theorem~\ref{thm: diff} and the non-polygonal nature of $B_\mu$.

\bigskip
\bigskip

We will now state one of the main theorems of \cite{NewmanPiza}. Suppose $T$ is a random variable on a probability space $(\Omega,\mathcal{F},P)$ with $E(T^2)<\infty$, where $E$ represents expectation with respect to $P$. Suppose further that $\Omega = \mathbb{R}^I = \{\omega = (\omega_i~:~i \in I)\}$ with $I$ a countable set and $\mathcal{F}$ is the Borel sigma-field $\mathcal{B}^I$. Generally, for $W \subseteq I$, write $\mathcal{F}(W)$ for the Borel sigma-field $\mathcal{B}^W$. Let $U_1, U_2, \ldots$ be disjoint subsets of $I$ and express $\omega$ for each $k$ as $(\omega^k,\hat\omega^k)$, where $\omega^k$ (resp. $\hat \omega^k$) is the restriction of $\omega$ to $U_k$ (resp. to $I\setminus U_k$). For each $k$, let $D_k^0$ and $D_k^1$ be disjoint events in $\mathcal{B}^{U_k}$. Define
\[
H_k(\omega) = T_k^1(\hat\omega^k) - T_k^0(\hat \omega^k)\ ,
\]
where 
\[
T_k^0(\hat\omega^k) = \sup_{\omega^k \in D_k^0} T((\omega^k,\hat\omega^k)) \text{ and } T_k^1(\hat\omega^k) = \inf_{\omega^k \in D_k^1} T((\omega^k,\hat\omega^k))\ .
\]
\begin{theorem}[Newman-Piza]\label{thm:NP}
Assume the general setting just described and the following three hypotheses about $P$, the $U_k$'s, the $D_k^\delta$'s and $T$:
\begin{enumerate}
\item Conditional on $\mathcal{F}(I \setminus \cup_k U_k)$, the $\mathcal{F}(U_k)$'s are mutually independent.
\item There exist $a,b>0$ such that, for any $k$,
\[
P(\omega^k \in D_k^0~|~\mathcal{F}(U_k^c)) \geq a \text{ and } P(\omega^k \in D_k^1~|~\mathcal{F}(U_k^c)) \geq b \text{ a.s. }
\]
\item For every $k$, $H_k \geq 0$ a.s.
\end{enumerate}
Suppose that, for some $\varepsilon>0$ and each $k$, $F_k \in \mathcal{F}$ is a subset of the event $\{H_k \geq \varepsilon\}$. Then
\[
\Var T \geq ab\varepsilon^2 \sum_k P(F_k)^2\ .
\]
\end{theorem}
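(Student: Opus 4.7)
The plan is to prove the inequality by a martingale-difference decomposition of $\Var T$ block-by-block along the sets $U_k$. Enumerate the blocks $U_1,U_2,\ldots$, set $\mathcal{H}=\mathcal{F}(I\setminus\bigcup_j U_j)$, and build the filtration $\mathcal{G}_0=\mathcal{H}$, $\mathcal{G}_k=\sigma(\mathcal{H},\mathcal{F}(U_1),\ldots,\mathcal{F}(U_k))$. Writing $\Delta_k=\E[T\mid\mathcal{G}_k]-\E[T\mid\mathcal{G}_{k-1}]$, orthogonality of martingale differences gives $\E\bigl[(\sum_k\Delta_k)^2\bigr]=\sum_k\E[\Delta_k^2]$, and combining this with the Pythagorean splits $\Var T=\E[(T-\E[T\mid\mathcal{G}_\infty])^2]+\Var(\E[T\mid\mathcal{G}_\infty])$ and $\Var(\E[T\mid\mathcal{G}_\infty])=\Var(\E[T\mid\mathcal{G}_0])+\sum_k\E[\Delta_k^2]$ yields
\[
\Var T \;\ge\; \sum_k \E[\Delta_k^2].
\]
Thus it suffices to prove $\E[\Delta_k^2]\ge ab\varepsilon^2\,\P(F_k)^2$ for each fixed $k$.

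Next I will set $\alpha_k=\E[T_k^0\mid\mathcal{G}_{k-1}]$ and $\beta_k=\E[T_k^1\mid\mathcal{G}_{k-1}]$ and show that $\E[T\mid\mathcal{G}_k]\le\alpha_k$ holds a.s.\ on $\{\omega^k\in D_k^0\}$ and $\E[T\mid\mathcal{G}_k]\ge\beta_k$ on $\{\omega^k\in D_k^1\}$. The pointwise bound $T(\omega)\le T_k^0(\hat\omega^k)$ on $\{\omega^k\in D_k^0\}$ is immediate from the definition of $T_k^0$, and taking conditional expectation over the remaining coordinates $\omega^{k+1},\omega^{k+2},\ldots$ preserves the inequality. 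Here hypothesis~1 is crucial: because $T_k^0$ does not involve $\omega^k$ and the $\mathcal{F}(U_j)$'s are conditionally independent given $\mathcal{H}$, conditioning on $\omega^k$ does not affect $\E[T_k^0\mid\mathcal{G}_{k-1}]$, which justifies the identification $\alpha_k=\E[T_k^0\mid\mathcal{G}_k]$. The tower property applied to hypothesis~3 further gives $\beta_k-\alpha_k=\E[H_k\mid\mathcal{G}_{k-1}]\ge0$.

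Finally I will use the elementary lemma: if $X$ is a random variable with $\P(X\le\alpha)\ge a$, $\P(X\ge\beta)\ge b$ and $\beta\ge\alpha$, then $\Var X\ge ab(\beta-\alpha)^2$ (quickly from $\E[(X-X')^2]\ge 2ab(\beta-\alpha)^2$ for an independent copy $X'$). Hypothesis~2 descends from $\mathcal{F}(U_k^c)$ to the sub-$\sigma$-algebra $\mathcal{G}_{k-1}\subseteq\mathcal{F}(U_k^c)$ by the tower property, so applying the lemma to $X=\E[T\mid\mathcal{G}_k]$ conditionally on $\mathcal{G}_{k-1}$ gives
\[
\E[\Delta_k^2]=\E\bigl[\Var(\E[T\mid\mathcal{G}_k]\mid\mathcal{G}_{k-1})\bigr]\;\ge\; ab\,\E\bigl[(\E[H_k\mid\mathcal{G}_{k-1}])^2\bigr]\;\ge\; ab(\E H_k)^2
\]
by conditional Jensen, and $\E H_k\ge\varepsilon\P(H_k\ge\varepsilon)\ge\varepsilon\P(F_k)$ because $H_k\ge0$ and $F_k\subseteq\{H_k\ge\varepsilon\}$. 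Summing over $k$ then completes the proof. The only delicate step, and the natural candidate for the main obstacle, is the a.s.\ comparison $\E[T\mid\mathcal{G}_k]\le\alpha_k$ on $\{\omega^k\in D_k^0\}$, which requires commuting the conditional expectation past the $\omega^k$-independence of $T_k^0$ via hypothesis~1.
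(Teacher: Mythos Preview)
The paper does not prove this theorem; it merely quotes it from \cite{NewmanPiza} and uses it as a black box in Sections~4 and~5. So there is no ``paper's own proof'' to compare against.

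That said, your argument is correct and is essentially the original Newman--Piza proof. The martingale-difference decomposition along the block filtration $\mathcal{G}_k=\sigma(\mathcal{H},\mathcal{F}(U_1),\ldots,\mathcal{F}(U_k))$, the pointwise bounds $T\le T_k^0$ on $\{\omega^k\in D_k^0\}$ and $T\ge T_k^1$ on $\{\omega^k\in D_k^1\}$, the use of hypothesis~1 to commute the conditional expectation of $T_k^0$ past $\mathcal{F}(U_k)$, and the two-point variance lemma $\Var X\ge ab(\beta-\alpha)^2$ applied conditionally on $\mathcal{G}_{k-1}$ are exactly the ingredients of their proof. The step you flagged as ``delicate'' --- that $\E[T_k^0\mid\mathcal{G}_k]=\E[T_k^0\mid\mathcal{G}_{k-1}]$ --- is indeed the place where hypothesis~1 enters, via the standard fact that mutual conditional independence of the $\mathcal{F}(U_j)$ given $\mathcal{H}$ implies $\mathcal{F}(U_k)\perp\sigma(\mathcal{F}(U_j):j>k)\mid\mathcal{G}_{k-1}$; your handling of it is fine.
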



Before showing how we will use the last theorem in the proof of Theorem~\ref{thm:unbounded} we will introduce two essential tools given in Proposition~\ref{prop:trapping} and Lemma~\ref{lem:nononeedges}. The first one can be seen as an estimate on the location of first intersection points of growing (random) balls centered at two different points (compare to the proof of Theorem~6 in \cite{NewmanPiza}).



For the rest of this section, fix $\mu \in \mathcal{M}_p$ for some $p \in [\vec p_c, 1)$ and $\theta \in [0,\theta_p)$. For each angle $\phi$, define $v_\phi$ to be the point of $\partial B_\mu$ on the line containing the origin and $w_\phi$. From Corollary~\ref{cor:nonpolygonal}, we can find $\theta_1 \in (\theta,\theta_p)$ such that there is an extreme point of $B_\mu$ at $v_{\theta_0}$ for some $\theta_0 \in (\theta,\theta_1)$. Let $D$ be the closed arc of $\partial B_\mu$ from $\theta_1$ to $-\theta_1$ (counterclockwise) and let $A_n$ be the set of all vertices in geodesics from $0$ to $nv_\theta$. From \eqref{eq:sad}, we may fix $\kappa$ with
\begin{equation}\label{eq: kappaeq}
1/2<\kappa<1
\end{equation}
such that for all $n$ sufficiently large,
\begin{equation}\label{eq: nonrandombound}
\E \tau(0,nv_\theta) - n \leq n^\kappa\ .
\end{equation}
(Though \eqref{eq:sad} is stated for $w_\theta$ we may slightly increase $\kappa<1$ to make the above statement true.)

\begin{figure}\label{fig:pconeD}
\centering
\definecolor{qqwuqq}{rgb}{0,0.39,0}
\definecolor{uququq}{rgb}{0.25,0.25,0.25}
\definecolor{qqqqff}{rgb}{0,0,1}
\begin{tikzpicture}[line cap=round,line join=round,>=triangle 45,x=0.7cm,y=0.7cm]
\clip(-3.42,-3.74) rectangle (12.97,9.55);
\draw [shift={(0,0)},color=qqwuqq,fill=qqwuqq,fill opacity=0.1] (0,0) -- (30.47:0.77) arc (30.47:58.93:0.77) -- cycle;
\draw [line width=0.4pt,dotted] (3.01,4.99)-- (5.04,2.96);
\draw [line width=0.4pt,dotted] (3.01,4.99)-- (0,0);
\draw [line width=0.4pt,dotted] (5.04,2.96)-- (0,0);
\draw [line width=0.4pt,dotted] (5.44,1.96)-- (0,0);
\draw [line width=0.4pt,dotted] (5.44,-1.96)-- (0,0);
\draw[color=qqqqff] (4.01,4.61) node[anchor=north west] {$\omega_{\frac{\pi}{4}}$};
\draw (-3.01,4.99)-- (-5.04,2.96);
\draw [line width=0.4pt,dotted] (-3.01,4.99)-- (0,0);
\draw [line width=0.4pt,dotted] (4.01,4.01)-- (0,0);
\draw [line width=0.4pt,dotted] (-5.04,2.96)-- (0,0);
\draw [line width=0.4pt,dotted] (5.04,-2.96)-- (0,0);
\draw [line width=0.4pt,dotted] (3.01,-4.99)-- (0,0);
\draw [line width=0.4pt,dotted] (3.01,-4.99)-- (5.04,-2.96);
\draw [line width=0.4pt,dotted] (-5.04,-2.96)-- (0,0);
\draw [line width=0.4pt,dotted] (-3.01,-4.99)-- (0,0);

\draw (-3.01,-4.99)-- (-5.04,-2.96);
\draw [shift={(0,0)},dotted]  plot[domain=-0.53:0.53,variable=\t]({1*5.83*cos(\t r)+0*5.83*sin(\t r)},{0*5.83*cos(\t r)+1*5.83*sin(\t r)});
\draw [shift={(0,0)}]  plot[domain=-0.33:0.33,variable=\t]({1*5.83*cos(\t r)+0*5.83*sin(\t r)},{0*5.83*cos(\t r)+1*5.83*sin(\t r)});
\draw [->] (0,0) -- (0.01,8.43);
\draw [->] (-4,0) -- (10.54,0);
\draw (5.93,1.22) node[anchor=north west] {$\bar{D}$};
\fill [color=qqqqff] (3.01,4.99) circle (1.5pt);
\fill [color=qqqqff] (4.01,4.01) circle (1.5pt);
\fill [color=qqqqff] (5.04,2.96) circle (1.5pt);
\draw[color=qqqqff] (5.72,3) node {$\omega_{\theta_p}$};
\draw[color=qqqqff] (6.02,2) node {$\omega_{\theta_1}$};
\draw[color=qqqqff] (6.22,-2) node {$\omega_{-\theta_1}$};
\fill [color=qqqqff] (5.50,1.96) circle (1.5pt);
\fill [color=qqqqff] (5.50,-1.96) circle (1.5pt);
\fill [color=uququq] (0,0) circle (1.5pt);
\fill [color=qqqqff] (-3.01,4.99) circle (1.5pt);
\fill [color=uququq] (0,0) circle (1.5pt);
\fill [color=qqqqff] (-5.04,2.96) circle (1.5pt);
\fill [color=uququq] (0,0) circle (1.5pt);
\fill [color=qqqqff] (5.04,-2.96) circle (1.5pt);
\fill [color=uququq] (0,0) circle (1.5pt);
\fill [color=qqqqff] (3.01,-4.99) circle (1.5pt);
\fill [color=uququq] (0,0) circle (1.5pt);
\fill [color=qqqqff] (3.01,-4.99) circle (1.5pt);
\fill [color=qqqqff] (5.04,-2.96) circle (1.5pt);
\fill [color=qqqqff] (-5.04,-2.96) circle (1.5pt);
\fill [color=uququq] (0,0) circle (1.5pt);
\fill [color=uququq] (0,0) circle (1.5pt);
\fill [color=qqqqff] (-4,0) circle (1.5pt);
\end{tikzpicture}
\caption{Pictorial description of the complement arc  $\bar{D} = \partial B \setminus D$ of $D$. $D$ is the closed arc in $\partial B$ with angles between $\theta_{1}$ and $-\theta_{1}$ counterclockwise.}
\end{figure}
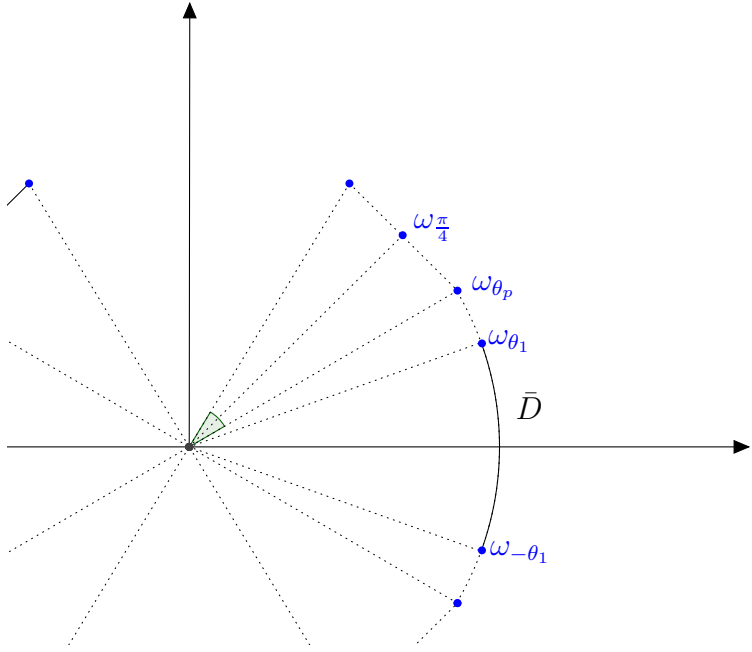


\begin{proposition}\label{prop:trapping}
Given 
\begin{equation}\label{eq:gammadef}
\zeta \in \left( \frac{\kappa+1}{2},1\right)
\end{equation}
and $\e>0$, there exists $n_0=n_0(\zeta,\e)$ such that if $n>n_0$ and $n^\zeta\leq M \leq n/2$ then
\[
\P(A_n \cap M D = \varnothing) > 1-\e\ .
\]
\end{proposition}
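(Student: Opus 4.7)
The plan is to adapt the trapping strategy of Newman and Piza: combine a quantitative strict triangle inequality in the $g$-norm with a Chebyshev-type comparison between passage times. Any vertex $z \in A_n$ near $MD$ would witness the equality
\[
\tau(0,z) + \tau(z, nv_\theta) = \tau(0, nv_\theta),
\]
whose right-hand side is controlled above in expectation via the non-random fluctuation bound \eqref{eq: nonrandombound}, while the left-hand side is bounded below, again in expectation, by a geometric detour estimate coming from the geometry of $B_\mu$ near $v_\theta$. The hypothesis $\zeta > (\kappa+1)/2$ is exactly what ensures these two estimates are incompatible.

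\emph{Geometric step.} I would first prove that there exists $c = c(\theta, \theta_1, B_\mu) > 0$ such that, for every $w \in D$, every $M \in [n^\zeta, n/2]$, and every $n \geq 1$,
\[
g(Mw) + g(nv_\theta - Mw) \;\geq\; n + c M.
\]
Writing $r = M/n \in (0, 1/2]$ and using the homogeneity of $g$, this reduces to showing $h(r, w) := r + g(v_\theta - rw) - 1 \geq c r$ on the compact set $D \times (0, 1/2]$. The strict positivity $h > 0$ follows from the fact that $v_{\theta_0}$ is an extreme point of $B_\mu$ with $\theta_0 \in (\theta, \theta_1)$ angularly separating $v_\theta$ from $D$: this precludes $v_\theta$ being written as a non-trivial convex combination $r w + (1-r) u$ with $w \in D$ and $u \in B_\mu$. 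A compactness and continuity argument then upgrades $h > 0$ to the uniform linear lower bound $h \geq c r$.

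\emph{Probabilistic step.} For each lattice vertex $z$ within unit distance of $MD$, put $X_z := \tau(0, z) + \tau(z, nv_\theta) - \tau(0, nv_\theta) \geq 0$; by subadditivity $\{z \in A_n\} = \{X_z = 0\}$. Using $\E \tau(0, y) \geq g(y)$, the geometric step, and \eqref{eq: nonrandombound},
\[
\E X_z \;\geq\; g(z) + g(nv_\theta - z) - n - n^\kappa \;\geq\; c M - n^\kappa \;\geq\; \tfrac{c}{2} M,
\]
the last inequality valid for $n$ large and $M \geq n^\zeta$ since $\zeta > \kappa$. The assumption $\E \tau_e^2 < \infty$ yields the classical estimate $\Var \tau(0, y) \leq C|y|$ and hence $\Var X_z = O(n)$, so Chebyshev gives
\[
\P(z \in A_n) \;\leq\; \frac{\Var X_z}{(\E X_z)^2} \;=\; O\!\left(\frac{n}{M^2}\right).
\]

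\emph{The main obstacle} is that summing this naively over the $O(M)$ lattice vertices near $MD$ yields a bound of order $n/M \leq n^{1-\zeta}$, which is not small when $\zeta < 1$. The improvement comes from exploiting that $A_n$ is a connected union of paths from $0$ to $nv_\theta$, so if $A_n$ meets the arc $MD$ at all it must cross it transversally; one can then replace the union bound over all potential crossing vertices by a bound at a discretization of $D$, coupled with a joint concentration statement for the passage-time fields $\{\tau(0, z)\}_z$ and $\{\tau(z, nv_\theta)\}_z$ at the discretization points. The strength $\zeta > (\kappa+1)/2$, rather than the cruder $\zeta > \max(\kappa, 1/2)$, is precisely what provides the slack needed so that the geometric gap $cM \geq c n^\zeta$ dominates both the non-random discrepancy $n^\kappa$ and the $O(\sqrt{n})$-scale random fluctuation that survives this reduction.
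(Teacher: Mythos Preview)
Your approach is essentially the paper's: a strict triangle inequality for the $g$-norm on $D$, combined with concentration of passage times. The paper's geometric step is slightly slicker than your compactness argument---it takes a supporting linear functional $f$ for $B_\mu$ at $v_\theta$, so that $f \le g$ and $f(v_\theta)=1$; the extreme point at $v_{\theta_0}$ strictly between $v_\theta$ and $D$ forces $a := \max_D f < 1$, whence $g(nv_\theta - Mw) \ge f(nv_\theta - Mw) = n - aM$ and the gap $cM$ drops out immediately with $c = 1-a$.

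In your final paragraph, the connectedness/transversality observation is a red herring: it does not reduce the number of vertices you must control, since any vertex near $MD$ is a potential crossing point. What actually resolves the obstacle is purely the discretization-plus-uniform-concentration idea you also mention, and this works without any appeal to the topology of $A_n$. Concretely: pick $\zeta' < \zeta$ still above $(\kappa+1)/2$ and $\kappa < \kappa' < \kappa'' < 2\zeta'-1$; cover $M\partial B_\mu$ by $O(M^{1-\kappa'})$ balls of radius $M^{\kappa'}$ centered at points $x_k$; apply Chebyshev at each $x_k$ to get $|\tau(0,x_k) - \E\tau(0,x_k)| \le M^{(1+\kappa'')/2}$ with failure probability $O(M^{-\kappa''})$; then for each $y$ in the ball around $x_k$, bound $\tau(x_k,y)$ by Chebyshev again (its mean and variance are both $O(M^{\kappa'})$). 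Summing, the total failure probability is $o(1)$ and one obtains $|\tau(0,x) - \E\tau(0,x)| \le M^{\zeta'}$ \emph{simultaneously} for all $x \in M\partial B_\mu$. The paper isolates this as a separate lemma and invokes it once centered at $0$ and once centered at $nv_\theta$; on the resulting good event, the deterministic gap $cM \ge cn^\zeta$ beats the $2n^{\zeta'}$ fluctuation and the $n^\kappa$ non-random error, so no vertex of $MD$ can lie in $A_n$.
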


\begin{proof}
Since the origin is in the interior of $B_\mu$, we may find a linear functional $f$ that takes its (possibly non-unique) maximum on $B_\mu$ at $v_\theta$ with $f(v_\theta)=1$. We will first argue that $\max_{x \in D} f(x) < 1$. Let $S$ be the set 
\[
S = \{x\in \mathbb{R}^2~:~f(x)=1\}\ .
\]
Assume that $S$ intersects $D$ at some point $v_{\theta_2}$. By convexity of $B_\mu$, all points on $S$ between $v_\theta$ and $v_{\theta_2}$ must be in $B_\mu$. But $B_\mu$ lies entirely on one side of $S$, so these points must be in $\partial B_\mu$. Since $S$ does not intersect the origin, this segment of $S$ is equal to either the clockwise arc or the counterclockwise arc of $\partial B_\mu$ from $\theta$ to $\theta_2$. But then one of the two points $v_{\theta_0}$ or $v_{-\theta_0}$ is contained in the interior of this segment. This is a contradiction since these are extreme points of $B_\mu$. Therefore
\[
\max_{x \in B_\mu} f(x) = f(v_\theta) = 1 \text{ and } a := \max_{x \in D}f(x) <1 \ .
\]

For any $x \in \mathbb{R}^2$, define
\begin{equation}\label{eq:defg}
g(x) = \lim_{n \to \infty} (1/n) \E \tau(0,nx)\ .
\end{equation}
For $x \in \mathbb{Z}^2$, this limit exists by subadditivity of the sequence $(\E \tau(0,nx))_n$ and we even find 
\begin{equation}\label{eq:glatticeeq}
g(x) \leq \E \tau(0,x) \text{ for all } x \in \mathbb{Z}^2\ .
\end{equation}
For non-lattice $x$, we appeal to \cite[Lemma~3.2]{CoxDurrett}, which shows the existence of the limit uniformly over compact subsets of $\mathbb{R}^2$. Strictly speaking, that lemma shows convergence when $\E \tau(0,nx)$ is replaced by a different function (there called $\overline{g}(nx)$) but it satisfies $\sup_{x \in \mathbb{R}^2} | \overline{g}(x) - \E \tau(0,x) | < \infty$, so the result can be applied here.
In fact, $g$ defines a norm on $\mathbb{R}^2$ (see the discussion before Lemma~3.2 of \cite{CoxDurrett} for more properties of $g$) whose closed unit ball is equal to $B_\mu$. To establish a version of \eqref{eq:glatticeeq} for non-lattice $x$, take such an $x$ and write $x'$ for the unique lattice point with $x \in x' + [0,1)^2$. Now
\begin{eqnarray*}
g(x) &=& g(x') + (g(x) - g(x')) \leq \E \tau(0,x') + g(x-x') \\
&\leq& \E \tau(0,x) + \E \tau(0,x'-x) + g(x-x') \leq \E \tau(0,x) + C_1\ ,
\end{eqnarray*}
where $C_1 = \sup_{x,y \in [-2,2]^2} \E \tau(x,y) + \sup_{x \in [-2,2]^2} g(x)$. Similar reasoning using translation invariance of $\P$ by lattice vectors shows that for all $x,y \in \mathbb{R}^2, |\mathbb{E}\tau(x,y) - \mathbb{E}\tau(0,y-x)|\leq C_1$. Therefore
\begin{equation}\label{eq:translateerror}
\left|\mathbb{E} \tau(x,y) - \mathbb{E} \tau(x+z,y+z)\right| \leq 2C_1 \text{ for all } x,y,z \in \mathbb{R}^2\ .
\end{equation}
Finally this gives
\begin{equation}\label{eq:gnonlatticeeq}
g(x) \leq \E \tau(y,y+x) + 3C_1 \text{ for all } x,y \in \mathbb{R}^2\ .
\end{equation}

If $v \in MD$, we estimate 
\[
\E \tau(v,nv_\theta) \geq g(nv_\theta-v) - 3C_1\ .
\]
But for all $x\in \R^2$, $g(x) \geq f(x)$. This is clearly true if $g(x)=0$. Otherwise, $x/g(x) \in B_\mu$ so $f(x/g(x)) \leq 1$. Therefore 
\begin{eqnarray}\label{eq:expecteq1}
\E \tau(v,nv_\theta) &\geq& f(nv_\theta-v) -3C_1 = n-f(v) -3C_1 \nonumber \\
&\geq& n-aM -3C_1 = (n-M) +(1-a)M - 3C_1\ .
\end{eqnarray}
Now by \eqref{eq:translateerror} and \eqref{eq: nonrandombound}, there exists $C_2>0$ such that for all $0\leq M\leq n$,
\begin{equation}\label{eq:expecteq2}
\E \tau(Mv_\theta,nv_\theta) \leq g((n-M)v_\theta) + C_2(n-M)^\kappa + 2C_1 \leq (n-M) + C_2n^\kappa + 2C_1\ .
\end{equation}
Therefore if $\frac{2C_2n^\kappa}{1-a} + \frac{10C_1}{1-a}\leq M \leq n$, we combine \eqref{eq:expecteq1} and \eqref{eq:expecteq2} to find
\begin{equation}\label{eq:expecteq3}
\E \tau(Mv_\theta,nv_\theta) \leq \E \tau(v,nv_\theta) - (1-a)M/2 \text{ for all } v \in MD\ .
\end{equation}
Last, since $(1/M)\mathbb{E} \tau(0,Mv_{\theta'}) \to 1$ uniformly in $\theta'$ (again this follows from \cite[Lemma~3.2]{CoxDurrett}), we may choose $n_1$ such that $n \geq n_1$ and $n^\zeta \leq M \leq n/2$ implies
\begin{equation}\label{eq:expecteq4}
\E \tau(0,Mv_\theta) + \E \tau(Mv_\theta,nv_\theta) \leq \E \tau(0,v) + \E \tau(v,nv_\theta) - (1-a)M/4 \text{ for all } v \in MD\ .
\end{equation}
Here we are using the fact that $\zeta > \kappa$.


The following lemma will be used to bound the values of $|\tau(0,v)-\mathbb{E}\tau(0,v)|$ for $v \in MD$. Its proof will be given in the appendix. Fix $\zeta'<\zeta$ that still satisfies \eqref{eq:gammadef}.
\begin{lemma}\label{lem: ballbound}
Given $\e>0$, there exists $M_0$ such that for all $M \geq M_0$,
\[
\mathbb{P}\left( |\tau(0,x)-\mathbb{E}\tau(0,x)| \leq M^{\zeta'} \text{ for all } x \in M\partial B_\mu \right) > 1-\e\ .
\]
\end{lemma}


For any $n,M$, let $B_{n,M}$ be the event that
\begin{enumerate}
\item[(A)] $\left| \tau(0,v)-\mathbb{E} \tau(0,v) \right| \leq n^{\zeta'}$ for all $v \in MD$ and
\item[(B)] $\left| \tau(v,nv_\theta) - \mathbb{E} \tau(v,nv_\theta) \right| \leq n^{\zeta'}$ for all $v \in MD$.
\end{enumerate}
By Lemma~\ref{lem: ballbound} we may choose $n_0\geq n_1$ such that $n \geq n_0$ and $n^\zeta \leq M < n/2$ implies $\mathbb{P}(B_{n,M}) >1-\e$. 
We may further increase $n_0$ if necessary so that if $n \geq n_0$ then $(1-a)n^{\zeta}/4 \geq 2n^{\zeta'}$. We claim that for such $n$ and $M$, $B_{n,M}$ is the desired event for Proposition~\ref{prop:trapping}. Indeed, suppose that $v \in MD \cap A_n$. Then
\[
\tau(0,v) + \tau(v,nv_\theta) = \tau(0,nv_\theta) \leq \tau(0,Mv_\theta) + \tau(0,nv_\theta)\ .
\]
Using items (A) and (B) above, we find 
\[
\E \tau(0,v) + \E \tau(v,nv_\theta) \leq \E \tau(0,Mv_\theta) + \E \tau(Mv_\theta,nv_\theta) + 2 n^{\gamma'}\ .
\]
Combining this with \eqref{eq:expecteq4}, we find $(1-a)M/4 \leq 2 n^{\zeta'}$, which is a contradiction since $M\geq n^\zeta$.
\end{proof}

Our next tool is a lemma that gives a lower bound on the number of edges with values strictly larger than $1$ through which a geodesic outside the percolation cone must pass. Before stating it we must introduce an ordering on measures in $\mathcal{M}$, following the notation of  \cite{Marchand} and \cite{vdbk}.
\begin{definition}
For $\mu$ and $\nu$ in $\mathcal{M}$ we say that $\mu$ is {\it more variable} than $\nu$, written $\nu \prec \mu$, if
\[
\int f~d\mu \leq \int f~d\nu
\]
for all concave increasing functions $f~:~\mathbb{R} \to \mathbb{R}$ for which both sides are defined. If $\mu \neq \nu$ then we say that $\mu$ is {\it strictly more variable} than $\nu$.
\end{definition}
This ordering on measures was used by van den Berg and Kesten in \cite{vdbk} to show relations between limit shapes. Specifically, it was shown that if $\mu$ and $\nu$ satisfy certain mild requirements then if $\mu$ is strictly more variable than $\nu$, we have the strict containment $B_\nu \subset int~ B_\mu$. One of the requirements is that neither measure is in $\mathcal{M}_p$. Later, Marchand extended these results to the class $\mathcal{M}_p$. To describe this, recall that $\theta_p$ is the unique angle in $[0,\pi/4]$ such that $N_p$ is on the line which contains the origin and the point $w_{\theta_p}=(\cos \theta_p, \sin \theta_p)$. For any $x \in \mathbb{R}^2$ we previously defined (see \eqref{eq:defg})
\[
g(x) = \lim_{n \to \infty} \frac{1}{n} \E \tau(0,nx)\ .
\]
To emphasize dependence of $g$ on $\mu$ we will write $g_\mu$. The following is a restatement of the comparison theorem of Marchand. For symmetry reasons it is again only stated for the first quadrant.
\begin{theorem}[Marchand]\label{thm:marchand2}
Let $\mu$ and $\nu$ be in $\mathcal{M}_p$ such that $\mu$ is strictly more variable than $\nu$. Then for all $\theta \in [0,\theta_p)$, 
\[
g_\nu(v_\theta) > g_\mu(v_\theta)\ .
\]
\end{theorem}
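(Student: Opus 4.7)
I would adapt the coupling technique of van den Berg--Kesten~\cite{vdbk} to handle the common atom of mass $p$ at $1$. Write $\mu = p\delta_1 + (1-p)\mu'$ and $\nu = p\delta_1 + (1-p)\nu'$ with $\mu', \nu'$ supported on $(1,\infty)$; the hypothesis $\nu \prec \mu$ restricts to $\nu' \prec \mu'$, and strict variability $\mu \neq \nu$ forces $\mu' \neq \nu'$. On an enlarged probability space, construct an edgewise independent coupling by first sampling a Bernoulli of parameter $p$ to decide whether $\tau^\mu_e = \tau^\nu_e = 1$, and otherwise coupling $(\tau^\mu_e, \tau^\nu_e) \sim (\mu', \nu')$ by Strassen's theorem for the increasing concave order. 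Strassen yields an auxiliary variable $Z_e$ with $Z_e \ge \tau^\mu_e$ a.s.\ and $\mathbb{E}[Z_e \mid \tau^\nu_e] = \tau^\nu_e$, so that
\[
\mathbb{E}[\tau^\mu_e \mid \tau^\nu_e] \le \tau^\nu_e \quad \text{a.s., with equality on } \{\tau^\nu_e = 1\}.
\]

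The weak inequality $g_\mu \le g_\nu$ in every direction would follow by a standard path-swapping step. Letting $\gamma^\nu$ denote a $\nu$-geodesic from $0$ to $nv_\theta$, which is measurable with respect to $(\tau^\nu_e)_e$,
\[
\mathbb{E}\,\tau^\mu(0, nv_\theta) \le \mathbb{E}\,\tau^\mu(\gamma^\nu) = \mathbb{E}\sum_{e \in \gamma^\nu}\mathbb{E}[\tau^\mu_e \mid \tau^\nu_e] \le \mathbb{E}\,\tau^\nu(\gamma^\nu) = \mathbb{E}\,\tau^\nu(0, nv_\theta),
\]
and division by $n$ followed by $n \to \infty$ gives $g_\mu(v_\theta) \le g_\nu(v_\theta)$.

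For the strict inequality when $\theta \in [0, \theta_p)$, I would exploit the flat-edge geometry provided by Theorem~\ref{thm:marchand1}. Since $v_\theta \notin [M_p, N_p]$, we have $g_\nu(v_\theta) < \|v_\theta\|_1$ strictly, and I would argue that this forces $\nu$-geodesics to $nv_\theta$ to use a linear density of non-$1$ edges: there exists $\delta > 0$ with $\mathbb{E}\,\#\{e \in \gamma^\nu : \tau^\nu_e > 1\} \ge \delta n$ for all large $n$. On each such edge the coupling is non-trivial, and when $\mathbb{E}[\tau^\mu_e] < \mathbb{E}[\tau^\nu_e]$ the conditional gap $\mathbb{E}[\tau^\nu_e - \mathbb{E}[\tau^\mu_e \mid \tau^\nu_e]] > 0$ aggregates along the geodesic to an $\Omega(n)$ improvement of $\mathbb{E}\,\tau^\mu(0, nv_\theta)$ over $\mathbb{E}\,\tau^\nu(0, nv_\theta)$, hence $g_\mu(v_\theta) < g_\nu(v_\theta)$. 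In the equal-means case the conditional gap vanishes and strictness must instead be extracted from the min-over-paths operation: the local fluctuations of $\tau^\mu_e$ around $\tau^\nu_e$ on non-$1$ edges create branching opportunities that a $\mu$-geodesic can exploit via short local bypasses to shave a strictly positive expected amount per non-$1$ edge.

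The main obstacle is the linear density of non-$1$ edges. The qualitative statement follows by contradiction from the strict gap $g_\nu(v_\theta) < \|v_\theta\|_1$: if $\gamma^\nu$ used only $o(n)$ non-$1$ edges it would be essentially an oriented $\ell^1$-shortest path, forcing $g_\nu(v_\theta) = \|v_\theta\|_1$. Upgrading $o(n)$ to a genuine linear lower bound seems to require a quantitative block-renormalization comparing typical geodesics to extremal structures in the oriented $1$-cluster, in the spirit of the $f$-bypass construction of Section~\ref{sec:thmdiff}, with the effective gap $\|v_\theta\|_1 - g_\nu(v_\theta)$ playing the role of the variability budget. The equal-means subcase is a further technical wrinkle, handled by a local bypass exchange that converts edgewise fluctuations of $\tau^\mu$ into a quantitative saving through Jensen applied to the minimum.
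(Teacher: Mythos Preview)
First, a framing remark: the paper does not prove this statement at all---it is quoted from Marchand~\cite{Marchand} as a known result and then \emph{used} (notably in the proof of Lemma~\ref{lem:nononeedges}). So there is no ``paper's own proof'' to compare against; what follows is an assessment of your sketch on its own merits.

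Your treatment of the weak inequality $g_\mu \le g_\nu$ is correct and is exactly the van den Berg--Kesten coupling argument: the edgewise Strassen coupling gives $\E[\tau^\mu_e\mid (\tau^\nu_f)_f]=\E[\tau^\mu_e\mid \tau^\nu_e]\le \tau^\nu_e$, and evaluating the $\mu$-cost along the $\nu$-geodesic yields the comparison.

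The strict inequality argument, however, has a genuine gap. The displayed inequality ``$g_\nu(v_\theta)<\|v_\theta\|_1$'' is backwards: since every edge weight is $\ge 1$ one always has $g_\nu\ge\|\cdot\|_1$, and for $\theta\in[0,\theta_p)$ Theorem~\ref{thm:marchand1} gives the \emph{strict} inequality $g_\nu(v_\theta)>\|v_\theta\|_1$. More importantly, even with the correct sign your contradiction does not close. Suppose the $\nu$-geodesic $\gamma^\nu$ to $nv_\theta$ used only $o(n)$ non-$1$ edges. Its passage time is then within $o(n)$ of its edge count, so the edge count is $\sim n\,g_\nu(v_\theta)$, which is \emph{strictly larger} than $n\|v_\theta\|_1$. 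Nothing forces $\gamma^\nu$ to be ``essentially an oriented $\ell^1$-shortest path'': it is perfectly consistent with $g_\nu(v_\theta)>\|v_\theta\|_1$ that the geodesic is a long, wandering, almost-entirely-$1$ path. No contradiction with Theorem~\ref{thm:marchand1} arises.

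The upshot is that the linear density of non-$1$ edges on $\gamma^\nu$ is not a soft consequence of the flat-edge theorem; it is essentially equivalent in difficulty to the strict inequality you are trying to prove. Indeed, in this paper the linear-density statement (Lemma~\ref{lem:nononeedges}) is \emph{derived from} Theorem~\ref{thm:marchand2}, not the other way around. In Marchand's actual proof the strictness is obtained by a block-renormalization/local-bypass construction (in the spirit of~\cite{vdbk}) that directly manufactures, with positive density along the geodesic, local configurations where the $\mu$-weights allow a quantifiable saving over the $\nu$-optimal route. Your final paragraph gestures at this, but the work lives entirely there; the preceding ``contradiction from $g_\nu(v_\theta)\neq\|v_\theta\|_1$'' step does not reduce the problem. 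The equal-means subcase is likewise the crux of the van den Berg--Kesten argument and requires the same renormalization machinery, not just ``Jensen applied to the minimum''.
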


\begin{remark}
In fact a slightly stronger statement follows from Theorem~\ref{thm:marchand2} and properties of the limit shape. Let $C$ be a closed subset of $\partial B_\mu$ that does not intersect the flat edge $[M_p,N_p]$ (or any of its reflections in the axes). Let $\Theta$ be the set of angles corresponding to $C$ (that is, $\theta \in \Theta$ if the line segment connecting $0$ to a point in $C$ makes the angle $\theta$ with the $x$-axis). If $\mu$ and $\nu$ are in $\mathcal{M}_p$ such that $\mu$ is strictly more variable than $\nu$, then
\[
\inf_{\theta \in \Theta} \left[ g_\nu(v_\theta) - g_\mu(v_\theta) \right] > 0\ .
\]
\end{remark}

Now we are ready to state our next tool, a lemma that is analogous to Claim~3 in \cite{DH}. 
Since $p<1$ we may choose $y>1$ such that 
\begin{equation}\label{eq:defy}
q:=\mu([y,\infty))>0\ .
\end{equation}

\begin{lemma}\label{lem:nononeedges}
Let $C$ be a closed subset of $\partial B_\mu$ that does not intersect $[M_p,N_p]$ or any of its reflections about the axes. Given $\varepsilon>0$, there exists $M_1$ and $\rho>0$ such that with probability at least $1-\varepsilon$, the following holds. For all $M>M_1$, $x \in MC$ and for every geodesic $\gamma$ from $0$ to $x$, at least $\rho M$ edges of $\gamma$ have passage times $\geq y$.
\end{lemma}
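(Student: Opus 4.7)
The plan is to argue by contradiction using Marchand's strict comparison theorem (the Remark after Theorem~\ref{thm:marchand2}). The strategy is to construct an auxiliary measure $\nu \in \mathcal{M}_p$ strictly less variable than $\mu$, to extract from Marchand's theorem a positive gap $\delta$ between $g_\nu$ and $g_\mu$ on the angular set associated to $C$, and finally to use a coupling to show that a $\mu$-geodesic to $MC$ which used too few edges of weight $\geq y$ would force $\tau_\nu(0,x)$ to be within $o(M)$ of $\tau_\mu(0,x)$, contradicting the gap.

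Concretely, set $y^{*} := \E[\tau_e \mid \tau_e \geq y] + 1$ and define
\[
\nu \;:=\; \mu\big|_{[1,y)} \;+\; q\,\delta_{y^{*}}.
\]
Then $\nu(\{1\})=p$ and $\operatorname{supp}(\nu)\subset[1,\infty)$, so $\nu\in\mathcal{M}_p$, and $\nu \neq \mu$ since $y^{*}>\E[\tau_e\mid\tau_e\geq y]$. For every concave increasing $f$, Jensen's inequality combined with the shift by $1$ gives
\[
\int f\,d\nu - \int f\,d\mu \;=\; q\bigl(f(y^{*})-\E[f(\tau_e)\mid\tau_e\geq y]\bigr) \;>\; 0,
\]
so $\nu\prec\mu$ strictly. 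Because $C$ is closed and disjoint from $[M_p,N_p]$ and its reflections, the angular set $\Theta$ associated to $C$ is compact and avoids every percolation cone, and the Remark after Theorem~\ref{thm:marchand2} yields
\[
\delta \;:=\; \inf_{\theta\in\Theta}\bigl(g_\nu(v_\theta)-g_\mu(v_\theta)\bigr) \;>\; 0.
\]

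Next, couple the two models by sampling $\tau_e^\mu\sim\mu$ and setting $\tau_e^\nu := \tau_e^\mu \mathbf{1}_{\tau_e^\mu<y} + y^{*}\mathbf{1}_{\tau_e^\mu\geq y}$, so that $\tau_e^\nu \sim \nu$. For any self-avoiding path $\gamma$,
\[
\tau^\nu(\gamma) - \tau^\mu(\gamma) \;=\; \sum_{e\in\gamma,\,\tau_e^\mu\geq y}(y^{*}-\tau_e^\mu) \;\leq\; y^{*}\,N(\gamma),
\]
where $N(\gamma)$ counts the edges of $\gamma$ with $\tau_e^\mu\geq y$. Fix $\rho := \delta/(4y^{*})$. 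If, for some $M>M_1$, some $x\in MC$ and some $\mu$-geodesic $\gamma$ from $0$ to $x$, one had $N(\gamma)<\rho M$, then applying the coupling bound to $\gamma$ gives
\[
\tau_\nu(0,x) \;\leq\; \tau^\nu(\gamma) \;\leq\; \tau_\mu(0,x) + \rho M y^{*} \;=\; \tau_\mu(0,x) + \tfrac{M\delta}{4}.
\]

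To close the argument, note that the Cox--Durrett shape theorem is almost-sure and uniform over compact subsets of $\R^2$, and that both $\mu$ and $\nu$ live on the same coupled probability space. So, given $\varepsilon>0$, there is an event $\mathcal{G}_\varepsilon$ with $\P(\mathcal{G}_\varepsilon)\geq 1-\varepsilon$ and an $M_1$ such that on $\mathcal{G}_\varepsilon$, for all $M>M_1$ and all $x\in MC$ simultaneously, $\tau_\mu(0,x)\leq Mg_\mu(v_\theta)+\delta M/8$ and $\tau_\nu(0,x)\geq Mg_\nu(v_\theta)-\delta M/8$. On $\mathcal{G}_\varepsilon$ the displayed inequality forces $M\delta\leq 3M\delta/8$, which is impossible. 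Thus $\mathcal{G}_\varepsilon$ is contained in the event in the lemma. I expect the main technical nuisance to be arranging the simultaneous uniform shape control in the coupled environment; the cleanest route is to cite Egorov-type upgrades of the almost-sure Cox--Durrett theorem applied to each marginal, then intersect, though if one needs explicit polynomial bounds it is more convenient to invoke Alexander's nonrandom fluctuation estimate together with Kesten-type concentration (which is what the ambient hypotheses in Section~\ref{sec:Newman} supply anyway).
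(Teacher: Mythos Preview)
Your argument is correct and follows essentially the same route as the paper: construct an auxiliary measure in $\mathcal{M}_p$ that is strictly less variable than $\mu$, invoke the strict comparison (Remark after Theorem~\ref{thm:marchand2}) to get a uniform gap on $\Theta$, couple, and contradict the shape theorem. The paper's choice of auxiliary measure is slightly cleaner than yours: it sets $\tau_e' := \tau_e + \mathbf{1}\{\tau_e\ge y\}$, which gives the exact identity $\tau'(\gamma)=\tau(\gamma)+N(\gamma)$ and avoids the need to introduce $y^*$ or bound $y^*-\tau_e^\mu$; it also makes the shape-theorem step completely routine (no Alexander/Kesten input is needed here, contrary to your closing remark). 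Two cosmetic points: your claimed strict inequality $\int f\,d\nu-\int f\,d\mu>0$ fails for constant $f$, but you only need $\ge 0$ together with $\mu\neq\nu$ for strict variability; and your final arithmetic should read $M\delta\le M\delta/2$ rather than $3M\delta/8$, though the contradiction is of course unchanged.
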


\begin{proof}
Define the edge weights $\{\tau_e':e \in \mathbb{E}^2\}$ by the rule that if $\tau_e \geq y$ then $\tau_e' = \tau_e+1$ and $\tau_e'=\tau_e$ otherwise. Let $\mu'$ be the marginal distribution of $\tau_e'$. It is not difficult to see that $\mu$ is strictly more variable than $\mu'$. Therefore, by Theorem~\ref{thm:marchand2} (and the remark afterward), we may find $\eta>0$ such that $(1-\eta) g_{\mu'}(v_\theta) > g_\mu(v_\theta)$ for all $\theta \in \Theta$, the set of angles corresponding to the set $C$. Equivalently, we have 
\[
(1-\eta)C \cap B_{\mu'} = \varnothing\ .
\]

For a path $\gamma$ let $N(\gamma)$ be the number of edges in $\gamma$ with weight at least $y$. By the shape theorem, there exists an event $A$ with $\mathbb{P}(A) > 1-\varepsilon$ and $M_1$ such that for all $M>M_1$ and $x \in M \partial B_\mu$, the $\tau$-geodesic $\gamma$ from $0$ to $x$ satisfies $(1-\eta^2)M < \tau(\gamma) < (1+\eta^2)M$, and similarly for $x' \in M\partial B_{\mu'}$ and the $\tau'$-length of $\tau'$-geodesics from $0$ to $x'$. We claim that $A$ is the desired event. Indeed, let $M>M_1$ and let $\gamma$ be a $\tau$-geodesic from $0$ to some $z \in MC$. We have
\[
\tau'(\gamma) = \tau(\gamma) + N(\gamma) \leq (1+\eta^2)M + N(\gamma)\ .
\]
On the other hand, $z=\frac{M}{s}x$ for some $x \in \partial B_{\mu'}$ and $s<1-\eta$, so
\[
\tau'(\gamma) \geq (1-\eta^2)\frac{M}{1-\eta}\ .
\]
Combining these we find that $N(\gamma)\geq (\eta-\eta^2)M$. We take this to be $\rho M$.
\end{proof}

We are now in good shape to give the proof of Theorem~\ref{thm:unbounded}.

\begin{proof}[Proof of Theorem~\ref{thm:unbounded}]

Let $e_1, e_2, \ldots$ be an enumeration of the edges of $\mathbb{E}^2$. We will first apply Theorem~\ref{thm:NP}. For that purpose, we set $T = \tau(0,n_\theta)$, $U_k = \{e_k\}$, $D_k^0$ the event that $\tau_{e_k} = 1$ and $D_k^1$ the event that $\tau_{e_k} \geq y$. Last, set $\varepsilon = y-1$, $a=p$ and $b=q$. The reader may verify that all of the hypotheses of Theorem~\ref{thm:NP} are satisfied. Therefore, setting
\[
F_k = \{\tau_{e_k} \geq y \text{ and } e_k \text{ is in a geodesic from } 0 \text{ to } nv_\theta\}\ ,
\]
we find that
\begin{equation}\label{eq:Fk}
\Var \tau(0,nv_\theta) \geq pq (1-y)^2 \sum_k \P(F_k)^2\ .
\end{equation}

Next, fix $\zeta$ as in Proposition~\ref{prop:trapping} and write $C$ for the arc of $\partial B_\mu$ complementary to $D$ (that is, the arc of $B_\mu$ from $\theta_1$ to $-\theta_1$ clockwise). Given $\e>0$, Proposition~\ref{prop:trapping} gives an $n_0$ such that for all $n>n_0$ and $n^\zeta \leq M \leq n/2$, the probability is at least $1-\e$ that all geodesics from $0$ to $nv_\theta$ intersect $M\partial B_\mu$ only in $MC$. Given any such geodesic, the piece of it from 0 to the first intersection with $M\partial B_\mu$ is a geodesic contained in $MB_\mu$, and so as long as $M \geq M_1$ (from Lemma~\ref{lem:nononeedges}), with probability at least $1-\e$ it will contain at least $\rho M$ edges with passage times at least $y$.

Summarizing, given $\e>0$ we may choose $n_0$ such that $n>n_0$ and $n^\zeta\leq M \leq n/2$ implies that with probability at least $1-\e/2$,
\begin{enumerate}
\item[(A')] every geodesic from $0$ to $nv_\theta$ contains at least $\rho M$ edges with endpoints in $MB_\mu$ with weights at least equal to $y$.
\end{enumerate}
Furthermore, from the shape theorem and the fact that $\inf~supp(\mu)=1$ (and therefore the number of edges in a path is at most its passage time), we may choose $c>1$ and $L_0>0$ such that with probability at least $1-\e/2$,
\begin{enumerate}
\item[(B')] for all $L>L_0$ every geodesic from $0$ to $nv_\theta$ contains at most $cL$ edges in $LB_\mu$.
\end{enumerate}

We now define a sequence of numbers $(\hat M_i)$ by $\hat M_1 = n^\zeta$ and, setting $J=2c/\rho$, for $i=1, \ldots, \mathcal{I}$ (for $\mathcal{I}=\lfloor (1-\zeta)/\log J\rfloor \log (n/2)$),
\[
\hat M_{i+1} = J \hat M_i\ .
\]

From \eqref{eq:Fk} it follows that if we write $\mathcal{E}_i$, $i=0, \ldots, \mathcal{I}-1$ for the set of edges with an endpoint in $\hat M_{i+1} B_\mu$ but no endpoints in $\hat M_i B_\mu$ (for $i=0$ we take all edges with an endpoint in $\hat M_i B_\mu$), then
\[
\Var \tau(0,nv_\theta) \geq pq(1-y)^2 \sum_{i=0}^{\mathcal{I}-1} \sum_{k:e_k \in \mathcal{E}_i} \P(F_k)^2\ .
\]
Using Jensen's inequality, we get a lower bound of
\begin{equation}\label{eq:sum1}
pq(1-y)^2 \sum_{i=0}^{\mathcal{I}-1} \frac{1}{|\mathcal{E}_i|}\left(\sum_{k:e_k \in \mathcal{E}_i} \P(F_k)\right)^2\ .
\end{equation}
We will now give a lower bound for the inner sum. Call $X_{i+1}$ the event that (A') and (B') hold for $M=\hat M_{i+1}$. On this event the number of edges on any geodesic $\gamma$ from $0$ to $nv_\theta$ in the set $\hat M_iB_\mu$ is at most $(\rho/2) \hat M_{i+1}$ and the number of edges on $\gamma$ in $\hat M_{i+1}B_\mu$ with weight at least $y$ is at least $\rho \hat M_{i+1}$. From this it follows that on the event $X_{i+1}$, we have the lower bound
\[
\sum_{k:e_k \in \mathcal{E}_i} I(F_k) \geq (\rho/2) \hat M_{i+1}\ .
\]
Since $B_\mu$ is bounded, there exists $d>0$ such that for all $i$, $\hat M_i \geq d \sqrt{|\mathcal{E}_i|}$. So \eqref{eq:sum1} is bounded below by
\[
pq(1-y)^2\left[ d(\rho/2) (1-\e)\right]^2\mathcal{I} \geq \widetilde C \log n
\]
for $\widetilde C$ independent of $n$. This completes the proof.

\end{proof}

\section{Proof of Theorem~\ref{thm:powerlaw}}

Again we apply Theorem~\ref{thm:NP} using the same setup as last time, so that \eqref{eq:Fk} holds. Let $\xi'>\xi_\theta$. Write $\mathcal{D}_n$ for the set of edges with an endpoint in $\Lambda_n^\gamma$ and neither of whose endpoints has distance more than $2n g_\mu(v_\theta)$ from the origin. Let $G_n$ be the event that $M_n(\theta)$ (recall the definition from after \eqref{eq:chitheta}) is contained in the set of endpoints of edges in $\mathcal{D}_n$. By the shape theorem and \eqref{eq:xitheta} we may find $C_1>0$ such that for all $n$ sufficiently large, $\mathbb{P}(G_n)>C_1$.

Choose $\varepsilon<C_1$ and let $\rho$ be from Lemma~\ref{lem:nononeedges}. We now essentially repeat the proof of \cite[Theorem~5]{NewmanPiza}, using Cauchy-Schwarz on \eqref{eq:Fk}. It is bounded below by
\begin{eqnarray*}
pq(1-y)^2 \sum_{k:e_k \in \mathcal{D}_n} \mathbb{P}(F_k)^2 &\geq& pq \frac{(1-y)^2}{|\mathcal{D}_n|}\left( \sum_{k:e_k \in \mathcal{D}_n} \mathbb{P}(F_k) \right)^2 \\
&=& pq \frac{(1-y)^2}{|\mathcal{D}_n|} \left[ \mathbb{E}\left( \sum_{k:e_k \in \mathcal{D}_n} I(F_k) \right)\right]^2 .
\end{eqnarray*}
Let $B_n$ be the event from Lemma~\ref{lem:nononeedges} using $C$ equal to the intersection of the ray at angle $\theta$ with $B_\mu$. We get now a lower bound of
\[
pq\frac{(1-y)^2}{|\mathcal{D}_n|} (\rho n \mathbb{P}(G_n \cap B_n))^2 \geq C_2 n^{1-\xi'}
\]
for $n$ sufficiently large. This completes the proof.

\appendix

\section{Appendix: proof of Lemma~\ref{lem: ballbound}}

Recall that $\mathbb{E}\tau_e^2 < \infty$ and that there exists $\kappa$ with $1/2<\kappa<1$ and $C_1>0$ such that for all $x \in \mathbb{R}^2$,
\begin{equation}\label{eq: assumption2}
|g(x) - \mathbb{E} \tau(0,x)| \leq C_1n^{\kappa}\ .
\end{equation}

For each $M>0$ let $\widetilde B(M)$ be the set of lattice points within distance 2 of $M\partial B_\mu$. We will show that there exists $M_0$ such that for all $M \geq M_0$,
\begin{equation}\label{eq: propeq}
\mathbb{P}\left( |\tau(0,x)-\mathbb{E}\tau(0,x)| \leq M^{\zeta'} \text{ for all } x \in \widetilde B(M) \right)>1-\e
\end{equation}
and this will prove Lemma~\ref{lem: ballbound}.

By \cite[Theorem~1]{Kesten}, there exists $C_2>0$ such that for all $a,x \in \mathbb{R}^2$, $Var~\tau(a,a+x) \leq C_2 |x|$. Therefore by Chebyshev, for any $\lambda>0$ and $a,x \in \mathbb{R}^2$,
\begin{equation}\label{eq: tool1}
\mathbb{P}(|\tau(a,a+x)-\mathbb{E}\tau(a,a+x)| \geq \lambda) \leq C_2\frac{|x|}{\lambda^2}\ .
\end{equation}

For $x \in \mathbb{R}$ and $r>0$ write 
\[
B(x;r) = \{z \in \mathbb{R}^2:|x-z|\leq r\}\ .
\]
Last, choose $\kappa''>\kappa'>\kappa$ such that 
\[
\zeta' > \frac{\kappa''+1}{2}\ .
\]

We now give a geometric lemma. The proof will be postponed until the end of the appendix.
\begin{lemma}\label{lem: convex}
There exists $C_3$ with the following two properties. 
\begin{enumerate}
\item For each $M>0$ and $y \in \widetilde B(M)$ the number of points in $\widetilde B(M) \cap B(y;M^{\kappa'})$ is at most $C_3M^{\kappa'}$.
\item For each $M>0$ one may choose $N_M:=\lfloor C_3M^{1-\kappa'} \rfloor $ (deterministic) points $x_1, \ldots, x_{N_M} \in \widetilde B(M)$ such that
\[
\widetilde B(M) \subseteq \bigcup_{k=1}^{N_M} B(x_k;M^{\kappa'})\ .
\] 
\end{enumerate}
\end{lemma}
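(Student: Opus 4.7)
The plan is to reduce both parts of the lemma to one geometric input about convex curves, and then do routine bookkeeping to pass from arc length to lattice-point counts. The geometric input is the following claim: for any planar convex body $K$ and any closed disk $\overline{B(y,r)}$, the arc length of $\partial K \cap \overline{B(y,r)}$ is at most $2\pi r$.

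To prove the claim, I would set $A = \partial K \cap \overline{B(y,r)}$ and $H = \mathrm{conv}(A) \subseteq K$. Any $p\in A$ lies in $\partial K$, so $p$ cannot be interior to $H$: otherwise a small Euclidean ball around $p$ would be contained in $H \subseteq K$, contradicting $p\in\partial K$. Hence $A \subseteq \partial H$, so the one-dimensional Hausdorff measure satisfies $|A| \leq \mathrm{perim}(H)$. Since $H$ is a planar convex set contained in $\overline{B(y,r)}$, monotonicity of perimeter for convex sets in the plane gives $\mathrm{perim}(H) \leq \mathrm{perim}(\overline{B(y,r)}) = 2\pi r$.

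For part (1): if $z$ is a lattice point in $\widetilde B(M)\cap B(y;M^{\kappa'})$, then the nearest point of $M\partial B_\mu$ to $z$ lies within Euclidean distance $2$ of $z$ and hence within distance $M^{\kappa'}+2$ of $y$. Thus $\widetilde B(M)\cap B(y;M^{\kappa'})$ is contained in the Euclidean $2$-neighborhood of $M\partial B_\mu \cap \overline{B(y;M^{\kappa'}+2)}$. Applying the claim to $K=MB_\mu$ bounds the arc length of the latter by $2\pi(M^{\kappa'}+2)$. The $2$-neighborhood of a rectifiable planar curve of length $L$ has area at most $4L+4\pi$, so a standard lattice-point count (area plus a boundary correction) gives at most $C_3 M^{\kappa'}$ points in the intersection, for an appropriate $C_3$ and all $M\geq 1$.

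For part (2): place points on $M\partial B_\mu$ at arc-length spacing $M^{\kappa'}/3$. Since $M\partial B_\mu$ has length $M\cdot \mathrm{perim}(\partial B_\mu)$, this uses at most $O(M^{1-\kappa'})$ points and produces an $(M^{\kappa'}/6)$-net of $M\partial B_\mu$, hence a $(2+M^{\kappa'}/6)$-net of $\widetilde B(M)$. Replace each center by the nearest lattice point of $\widetilde B(M)$ (at distance at most $2$) to obtain a $(4+M^{\kappa'}/6)$-net of $\widetilde B(M)$ inside $\widetilde B(M)$. For $M$ large this is dominated by $M^{\kappa'}$, and by enlarging $C_3$ we may absorb the finitely many small values of $M$. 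The main (and only mild) obstacle is the geometric claim about arc length, which the convex-hull argument settles cleanly; the rest is elementary.
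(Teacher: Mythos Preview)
Your argument is correct. The key geometric input---that the boundary of a planar convex body meets any disk of radius $r$ in arc length at most $2\pi r$---is exactly what the paper invokes (citing Yaglom--Boltjanski); your convex-hull proof of this fact is clean and self-contained. Part~(1) then proceeds essentially the same way in both treatments: translate the arc-length bound into a lattice-point count via the area of a tubular neighborhood.

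For part~(2) the approaches diverge. The paper runs a greedy packing argument in the style of Howard--Newman: it selects centers $x_k\in\widetilde B(M)$ one at a time so that the half-radius balls $B(x_k;M^{\kappa'}/2)$ are pairwise disjoint, bounds how many such disjoint balls can fit near the convex curve, and observes that once the process halts the full-radius balls must cover $\widetilde B(M)$. You instead lay points directly along $M\partial B_\mu$ at arc-length spacing $M^{\kappa'}/3$ and snap each to a nearby lattice point of $\widetilde B(M)$. Your construction is more explicit and arguably cleaner here, since the arc-length parametrization of a convex boundary is readily available; the paper's packing device is the generic ``a maximal $r/2$-separated set is an $r$-net'' argument, which has the advantage of transporting to settings where no convenient parametrization exists. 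One wording quibble: ``absorb the finitely many small values of $M$'' is not quite right since $M$ ranges over a continuum, but your intended meaning---that the constants can be chosen uniformly over $M$ in any bounded range, using $|\widetilde B(M)|=O(M)$---is clear and correct.
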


For $M>0$, choose $C_3$ and $x_1, \ldots, x_{N_M}$ as in Lemma~\ref{lem: convex}. For $k=1, \ldots, N_M$ let $B_{k,M}$ be the event
\[
B_{k,M} = \left\{ \text{for all } y\in B(x_k;M^{\kappa'}) \cap \widetilde B(M), ~\tau(x_k,y) \leq M^{(1+\kappa'')/2}\right\} \ .
\]

Now we estimate
\begin{eqnarray*}
\mathbb{P}(B_{k,M}^c) &\leq& \sum_{y \in B(x_k,M^{\kappa'}) \cap \widetilde B(M)} \mathbb{P}\left(\tau(x_k,y) \geq M^{(1+\kappa'')/2}\right) \\
&\leq& \sum_{y \in B(x_k,M^{\kappa'}) \cap \widetilde B(M)} \mathbb{P}\left( |\tau(x_k,y) - \mathbb{E}\tau(x_k,y)| \geq M^{(1+\kappa'')/2}-\mathbb{E}\tau(x_k,y)\right)
\end{eqnarray*}
We can choose $M_1$ such that if $M \geq M_1$ then for all $k=1, \ldots, N_M$ and all $y \in B(x_k,M^{\kappa'})$, we have
\[
M^{(1+\kappa'')/2} - \mathbb{E}\tau(x_k,y) \geq (1/2) M^{(1+\kappa'')/2}\ .
\]
This follows from the fact that $\kappa'<\kappa''$ and that there exists $C_4$ such that for all $x,y \in \mathbb{R}^2$, $\mathbb{E}\tau(x,y) \leq C_4|x-y|$. Therefore for $M \geq M_1$, we can use part 1 of Lemma~\ref{lem: convex} and \eqref{eq: tool1}:
\begin{eqnarray*}
\mathbb{P}(B_{k,M}^c) &\leq& \sum_{y \in B(x_k,M^{\kappa'}) \cap \widetilde B(M)} \mathbb{P}\left( |\tau(x_k,y) - \mathbb{E}\tau(x_k,y)| \geq (1/2) M^{(1+\kappa'')/2}\right) \\
&\leq& 4C_2 \sum_{y \in B(x_k,M^{\kappa'}) \cap \widetilde B(M)} \frac{M^{\kappa'}}{M^{1+\kappa''}} \leq \frac{4C_2C_3}{M^{1+\kappa'' - 2\kappa'}}\ .
\end{eqnarray*}
From this estimate and the value of $N_M$ from Lemma~\ref{lem: convex}, we find for $M \geq M_1$,
\[
\mathbb{P}(B_{k,M} \text{ for all } k = 1, \ldots, N_M) > 1-\frac{4C_2C_3^2}{M^{\kappa''-\kappa'}}\ .
\]
Therefore there exists $M_2 \geq M_1$ such that for all $M \geq M_2$,
\begin{equation}\label{eq: tool2}
\mathbb{P}(B_{k,M} \text{ for all } k = 1, \ldots, N_M) > 1-\e/2\ .
\end{equation}

Define the events $A_{k,M}$ for $M>0$ and $k=1, \ldots, N_M$ as
\[
A_{k,M} = \left\{|\tau(0,x_k)-\mathbb{E}\tau(0,x_k)| \leq M^{(1+\kappa'')/2}\right\}\ .
\]
Using \eqref{eq: tool1} again for each $k=1, \ldots, N_M$ and for any $M>0$, $\mathbb{P}(A_{k,M}^c) \leq \frac{C_2}{M^{\kappa''}}$. From the value of $N_M$ from Lemma~\ref{lem: convex},
\[
\mathbb{P}(A_{k,M} \text{ for all } k = 1, \ldots, N_M) > 1- \frac{C_3M}{M^{\kappa'+\kappa''}}
\]
and so using $\kappa''>\kappa'>1/2$, we can choose $M_3 \geq M_2$ such that for all $M \geq M_3$,
\begin{equation}\label{eq: tool3}
\mathbb{P}(A_{k,M} \text{ for all } k = 1, \ldots, N_M) > 1-\e/2\ .
\end{equation}

We now claim that there exists $M_0 \geq M_3$ such that if $M \geq M_0$ and the events in \eqref{eq: tool2} and \eqref{eq: tool3} hold (which occurs with probability at least $1-\e$) then the event in \eqref{eq: propeq} holds. Once we show this, the proof will be complete.

Let $M>0$ and suppose that the events of \eqref{eq: tool2} and \eqref{eq: tool3} hold. If $x \in \widetilde B(M)$ then we can use part 2 of Lemma~\ref{lem: convex} to find $k$ between $1$ and $N_M$ such that $x \in B(x_k;M^{\kappa'})$. Because $B_{k,M}$ occurs, $\tau(x_k,x) \leq M^{(1+\kappa'')/2}$. Therefore
\[
|\tau(0,x) - \tau(0,x_k)| \leq \tau(x_k,x) \leq M^{(1+\kappa'')/2}\ .
\]
Using the triangle inequality,
\begin{eqnarray*}
|\tau(0,x) - \mathbb{E}\tau(0,x)| &\leq& M^{(1+\kappa'')/2} + |\tau(0,x_k) - \mathbb{E}\tau(0,x_k)| \\
&+& |\mathbb{E}\tau(0,x_k) - g(x_k)| + |\mathbb{E}\tau(0,x)-g(x)| \\
&+& |g(x_k) - g(x)|\ .
\end{eqnarray*}
By the definition of $\kappa$ and the fact that $A_{k,M}$ occurs, there exists $C_5$ such that for all $M$ and all $x \in \widetilde B(M)$, we have an upper bound of
\begin{equation}\label{eq: last1}
2M^{(1+\kappa'')/2} + C_5 M^\kappa + |g(x_k) - g(x)|\ .
\end{equation}
Because $x_k$ and $x$ are within distance $2$ of $MB_\mu$,
\[
|g(x_k) - M| \leq C_6 \text{ and } |g(x) - M| \leq C_6\ ,
\]
where $C_6 = \sup_{y \in [-2,2]^2} g(y)$. So $|g(x_k) - g(x)| \leq 2C_6$. Putting this in \eqref{eq: last1} and using the fact that $\kappa < (1+\kappa'')/2 < \zeta'$, we may find $M_0 \geq M_3$ such that for all $M \geq M_0$ and all $x \in \widetilde B(M)$, the expression is bounded above by $M^{\zeta'}$. This completes the proof.

\begin{proof}[Proof of Lemma~\ref{lem: convex}]

The first statement follows from the fact that the arc length of the boundary of a convex set contained in a ball of radius $r$ cannot exceed $2\pi r$ (see for instance \cite[p.15]{Jaglom}). 
%
For the second statement, note first that  the number of vertices of $\widetilde B(M)$ is at most $C_4M$ for some $C_4$. Now we argue similarly to Howard and Newman \cite{HowardNewman}: place balls $B_i = B(x_i;M^{\kappa'})$ for points $x_i$ in $\widetilde B(M)$ in the following manner. Choose $x_1 \in \widetilde B(M)$ to be arbitrary and, assuming we have chosen $x_1, \ldots, x_k$ then pick $x_{k+1} \in \widetilde B(M)$ so that
\[
B(x_{k+1};M^{\kappa'}/2) \subseteq \left( \cup_{i=1}^k B(x_i;M^{\kappa'}/2) \right)^c\ .
\]
Because at each step, the ball $B(x_k;M^{\kappa'}/2)$ covers at least $C_5M^{\kappa'}$ number of vertices in $\widetilde B(M)$ and these balls are disjoint, the procedure must end after at most $N_M : = \lfloor C_6M^{1-\kappa'} \rfloor$ number of iterations. But now 
\[
\widetilde B(M) \subseteq \cup_{k=1}^{N_M} B(x_k;M^{\kappa'})\ .
\]
\end{proof}

\bigskip
\noindent
{\bf Acknowledgements:} We thank Jack Hanson and Tom Lagatta for discussions and for careful readings of the paper. We also thank an anonymous referee for a detailed report that contributed to a better presentation of this manuscript.

\end{document}